\documentclass[leqno,12pt]{amsart} 
\setlength{\textheight}{23cm}
\setlength{\textwidth}{16cm}
\setlength{\oddsidemargin}{0cm}
\setlength{\evensidemargin}{0cm}
\setlength{\topmargin}{0cm}
\usepackage{amssymb}
\usepackage{amsmath}
\usepackage{amsthm}
\usepackage{amscd}
\usepackage{graphicx}
\usepackage[dvips]{color}
\usepackage[all]{xy}
%
%
%
\theoremstyle{plain} 
\newtheorem{theorem}{\indent\sc Theorem}[section]
\newtheorem{lemma}[theorem]{\indent\sc Lemma}
\newtheorem{corollary}[theorem]{\indent\sc Corollary}
\newtheorem{proposition}[theorem]{\indent\sc Proposition}
\newtheorem{claim}[theorem]{\indent\sc Claim}

\theoremstyle{definition} 
\newtheorem{definition}[theorem]{\indent\sc Definition}
\newtheorem{remark}[theorem]{\indent\sc Remark}

\newtheorem{question}[theorem]{\indent\sc Question}
%

%


\makeatletter
    
    \@addtoreset{equation}{section}
  \makeatother

\begin{document}

\title[Ueda theory for compact curves with nodes]
{Ueda theory for compact curves with nodes} 

\author[T. Koike]{Takayuki Koike} 

\subjclass[2010]{ 
Primary 32J25; Secondary 14C20. 
}
%
\keywords{ 
Flat line bundles, Ueda's theory, the blow-up of the projective plane at nine points. 
}
\address{
Graduate School of Mathematical Sciences, The University of Tokyo \endgraf
3-8-1 Komaba, Meguro-ku, Tokyo, 153-8914 \endgraf
Japan
}
\email{tkoike@ms.u-tokyo.ac.jp}

\maketitle

\begin{abstract}
Let $C$ be a compact complex curve included in a non-singular complex surface such that the normal bundle is topologically trivial. 
Ueda studied complex analytic properties of a neighborhood of $C$ when $C$ is non-singular or is a rational curve with a node. 
We propose an analogue of Ueda's theory for the case where $C$ admits nodes. 
As an application, we study singular Hermitian metrics with semi-positive curvature on the anti-canonical bundle of the blow-up of the projective plane at nine points in arbitrary position. 
\end{abstract}

\section{Introduction}
Let $X$ be a non-singular complex surface and $C\subset X$ be a compact complex curve. 
Our aim is to investigate complex analytic properties of a neighborhood of $C$ when the normal bundle $N_{C/X}:=j^*\mathcal{O}_X(C)$ is topologically trivial, where we denote by $j$ the inclusion $C\hookrightarrow X$. 
In the present paper, we treat the case where $C$ is a curve with only nodes 
(i.e. $C$ is a $1$-dimensional reduced subvariety of $X$ with only normal crossing singularities. $C$ may be reducible, and may have a self-intersecting curve as an irreducible component). 
One main conclusion of our results in this paper is on the (non-)existence of $C^\infty$ Hermitian metrics with semi-positive curvature on the line bundle $\mathcal{O}_X(C)$ when $C$ is a cycle of rational curves: 

\begin{theorem}\label{thm:maincor}
Let $X$ be a non-singular complex surface and $C\subset X$ be a cycle of rational curves with topologically trivial normal bundle ($C$ is a reduced subvariety of $X$ with only nodes, and may be either a non-singular one or a rational curve with a node).  \\
$(i)$ Assume that $N_{C/X}$ is a flat line bundle (i.e. it can be regarded as an element of $H^1(C, U(1))$, where $U(1):=\{z\in\mathbb{C}\mid |z|=1\}$) with 
$\log d(\mathcal{O}_C, N_{C/X}^{n})=O(\log n)$ as $n\to\infty$, 
where $d$ is the Euclidean distance of $H^1(C, U(1))$. 
Then $\mathcal{O}_X(C)$ admits a $C^\infty$ Hermitian metric with semi-positive curvature. \\
$(ii)$ Assume that $N_{C/X}$ is not flat. 
Then the singular Hermitian metric $|f|^{-2}$ has the mildest singularities among singular Hermitian metrics of $\mathcal{O}_X(C)$ whose local weights are psh (plurisubharmonic), 
where $f\in H^0(X, \mathcal{O}_X(C))$ is a section whose zero divisor is $C$. Epecially, $\mathcal{O}_X(C)$ is nef, however it admits no $C^\infty$ Hermitian metric with semi-positive curvature. 
\end{theorem}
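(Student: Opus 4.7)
Both parts analyze psh local weights $\phi$ of singular Hermitian metrics on $\mathcal{O}_X(C)$ over a small tubular neighborhood $U$ of $C$, using the Ueda-type normal-form theory for nodal cycles developed earlier in the paper. For Part $(i)$, I would carry out a Ueda-type iteration on a suitable finite open cover $\{U_\alpha\}$ of $C$, constructing holomorphic local defining functions $w_\alpha \in \mathcal{O}(U_\alpha)$ of $C \cap U_\alpha$ whose ratios $t_{\alpha\beta} := w_\alpha/w_\beta$ on $U_\alpha \cap U_\beta$ are constants of modulus $1$, thereby realizing the flat class of $N_{C/X}$ in $H^1(C, U(1))$ as an actual unitary cocycle on $U$. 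The $w_\alpha$ are built order by order in the normal coordinate to $C$; at each order one solves a linear cohomological equation on $C$ whose inverse is controlled by the quantities $d(\mathcal{O}_C, N_{C/X}^n)^{-1}$, and the Diophantine hypothesis $\log d(\mathcal{O}_C, N_{C/X}^n) = O(\log n)$ is exactly the small-denominator estimate needed for the resulting formal expansion to converge to a genuinely holomorphic object. Once the $w_\alpha$ are in hand, the assignment $h(w_\alpha, w_\alpha) := 1$ defines a flat Hermitian metric on $\mathcal{O}_X(C)|_U$, and a standard cut-off / gluing with any background Hermitian metric outside a smaller neighborhood of $C$ extends it to a $C^\infty$ Hermitian metric on all of $X$ that is flat (hence in particular semi-positive) near $C$.

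For Part $(ii)$, fix any psh local weight $\phi$ of a singular Hermitian metric on $\mathcal{O}_X(C)$; the goal is the local comparison
\[
\phi \leq 2\log|f| + O(1)
\]
on a neighborhood of $C$, which is the precise statement that $|f|^{-2}$ has the mildest singularity among such metrics. Consider $g := \phi - 2\log|f|$ on $U \setminus C$; this is psh there since $-2\log|f|$ is pluriharmonic off $C$. The hypothesis that $N_{C/X}$ is not flat---i.e., its class in $\mathrm{Pic}^0(C) \cong \mathbb{C}^*$ has modulus $\neq 1$---forces a non-unitary constraint on any attempt to trivialize $\mathcal{O}_X(C)$ near $C$; via a monodromy computation running around the cycle, this obstructs $g$ from being unbounded above, since otherwise the leading asymptotic of $\phi$ along the transverse disks to $C$ would be compatible with a flat structure on $N_{C/X}$, contradicting non-flatness. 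Nefness of $\mathcal{O}_X(C)$ then follows because $|f|^{-2}$ is itself a psh metric whose curvature current $[C]$ is positive (so in Demailly's sense $\mathcal{O}_X(C)$ carries psh metrics with curvature $\geq -\varepsilon \omega$ for every $\varepsilon$), while no $C^\infty$ semi-positive metric can exist: the bounded weight $\phi_\infty$ of such a metric would have to satisfy $\phi_\infty \leq 2\log|f| + O(1) \to -\infty$ along $C$, which is impossible.

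The main obstacle I anticipate lies in Part $(i)$: extending the Ueda iteration across a node of $C$, where two local branches must be linearized simultaneously in coordinates compatible with the nodal structure, and verifying that the global Diophantine condition on $H^1(C, U(1))$---which involves the \emph{Euclidean} distance on the Jacobian of the nodal cycle---actually delivers the required small-denominator control at each step of this mixed smooth/nodal iteration. In Part $(ii)$ the obstacle is more qualitative, namely converting the purely algebraic non-flatness of $N_{C/X}$ into the quantitative upper bound $g \leq O(1)$; this should nevertheless reduce, after invoking the Ueda-type dichotomy from the earlier sections of the paper, to a monodromy computation on the cycle combined with a standard sub-mean-value pluripotential comparison.
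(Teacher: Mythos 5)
Your overall strategy aligns with the paper for part $(i)$: the Ueda-type iteration constructing local defining functions with unitary-constant transition data is exactly the content of Theorem \ref{thm:main} (applied via Corollary \ref{cor:main_P1}, which verifies the hypotheses $H^1(C,\mathbb{C}(N_{C/X}^{-n}))=0$, and hence infinite type, automatically for a nodal cycle of rational curves). Your identification of the node-crossing issue and the Siegel-type small-denominator role of the Diophantine condition is accurate. The final extension step is a bit glossed over, though: one cannot literally ``glue with any background metric by cut-off'' and keep semi-positivity. The actual construction (cf.\ \cite[Corollary 3.5]{K1}, \cite{Br}) builds a psh weight on $V$ out of the flat weight (typically via $-\log(-\log|f|_h^2)$ or a regularized max with a smooth competitor) and matches it outside a shell; the existence of a flat trivialization is what makes this possible, but the gluing itself is not a one-line cut-off argument.

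For part $(ii)$, your target inequality $\varphi \le 2\log|f| + O(1)$ is the right formulation of minimal singularity, and your reduction to studying the psh function $g = \varphi - 2\log|f|$ on $V\setminus C$ is the right move. However, your actual argument has two problems. First, ``non-flatness $\Rightarrow$ a monodromy obstruction bounds $g$ above'' is not a proof: the paper's argument goes through Theorem \ref{thm:main_2} $(ii)$, which establishes that any psh function on $V\setminus C$ whose growth is $O((-\log d(p,C))^{2\lambda})$ for some $\lambda<1$ is locally constant, and then runs the argument of \cite[Theorem 1.1]{K2} to rule out an unbounded $g$; the quantitative growth-rate dichotomy is essential, and the monodromy intuition alone does not supply it. Second, and more seriously, your nefness claim is wrong as stated: the existence of the singular metric $|f|^{-2}$ with positive curvature current $[C]$ shows only that $\mathcal{O}_X(C)$ is pseudo-effective, not nef; a pseudo-effective class on a surface can fail to be nef. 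Nefness in the paper comes from Theorem \ref{thm:main_2} $(i)$: the explicitly constructed strongly psh exhaustions $\Phi_\lambda$ on $V\setminus C$ with growth $O((-\log d)^{2\lambda})$, $\lambda>1$, are used (again following \cite{K2}) to manufacture, for each $\varepsilon>0$, a smooth metric with curvature $\ge -\varepsilon\omega$. Your final observation that no $C^\infty$ semi-positive metric can exist, \emph{given} the minimal singularity statement, is correct.
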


Note that $H^1(C, U(1))$ can be regarded as a subset of $H^1(C, \mathcal{O}_C^*)$, which is the set of all holomorphic line bundles on $C$ (see Remark \ref{rmk:inj_P_0}). 
We can apply Theorem \ref{thm:maincor} to the anti-canonical bundle $K_X^{-1}$ of the blow-up $X$ of the projective plane $\mathbb{P}^2$ at nine points different from each other. 
Let $C_0\subset \mathbb{P}^2$ be a compact curve of degree $3$ including all of the nine points. 
Denote by $C$ the strict transform of $C_0$. 
When $C_0$ is a non-singular elliptic curve, it is known that $K_X^{-1}=\mathcal{O}_X(C)$ admits a $C^\infty$ Hermitian metric with semi-positive curvature if $N_{C/X}$ is an torsion element of $H^1(C, \mathcal{O}_C^*)$ or satisfies the condition as in Theorem \ref{thm:maincor} $(i)$ (It follows from \cite[Theorem 3]{U83}, see \cite{Br}). 
It follows from Theorem \ref{thm:maincor} $(i)$ that the same phenomenon occurs even when $C_0$ is a curve with nodes if $N_{C/X}$ is topologically trivial. 
Thus, even in this case, we can pose the same question as the following Question \ref{q:dem}, which the author learned from Prof. Jean-Pierre Demailly. 

\begin{question}\label{q:dem}
Let $C_0\subset \mathbb{P}^2$ be a non-singular elliptic curve. 
Is there a configuration of nine points $\{p_j\}_{j=1}^9\subset C_0$ such that the anti-canonical bundle $K_X^{-1}$ of the blow-up $X$ of $\mathbb{P}^2$ at $\{p_j\}_{j=1}^9$ admits no $C^\infty$ Hermitian metric with semi-positive curvature? 
\end{question}

Theorem \ref{thm:maincor} $(ii)$ deduces the following: 

\begin{corollary}\label{cor:q_dem}
There exists a configuration of nine points $\{p_j\}_{j=1}^9\subset \mathbb{P}^2$ such that the anti-canonical bundle $K_X^{-1}$ of the blow-up $X$ of $\mathbb{P}^2$ at $\{p_j\}_{j=1}^9$ is nef, however it admits no $C^\infty$ Hermitian metric with semi-positive curvature. 
\end{corollary}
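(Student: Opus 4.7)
The plan is to produce a configuration of nine points on $\mathbb{P}^2$ for which Theorem \ref{thm:maincor}(ii) applies. I would take $C_0 \subset \mathbb{P}^2$ to be an irreducible nodal cubic, and choose nine points $p_1, \ldots, p_9$ on its smooth locus $C_0^{\mathrm{sm}}$ (to be specified below). Let $\pi\colon X \to \mathbb{P}^2$ be the blow-up at these points, with exceptional divisors $E_1, \ldots, E_9$, and let $C$ be the strict transform of $C_0$. Since each $p_i$ is a smooth point of $C_0$, the birational map $C \to C_0$ is an isomorphism, so $C$ is again a rational curve with a single self-node, i.e.\ a cycle of rational curves of length one. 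A standard computation yields $\mathcal{O}_X(C) = \pi^*\mathcal{O}_{\mathbb{P}^2}(3) \otimes \mathcal{O}_X(-\sum_i E_i) = K_X^{-1}$, and $C^2 = 9 - 9 = 0$, so $N_{C/X}$ has degree zero on $C$ and is topologically trivial.

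The central task is to choose the $p_i$ so that $N_{C/X}$ is \emph{not} flat. The generalised Jacobian $\mathrm{Pic}^0(C)$ of a nodal rational curve is isomorphic to $\mathbb{C}^*$, with the subgroup of flat line bundles identified with $U(1) \subset \mathbb{C}^*$ (reflecting the fact that $C$ is homotopy equivalent to $S^2 \vee S^1$, so $H^1(C, U(1)) = U(1)$). I would parametrise $C_0^{\mathrm{sm}} \cong \mathbb{C}^*$ and denote by $t_j \in \mathbb{C}^*$ the coordinates of the $p_j$. Then $N_{C/X} \simeq \mathcal{O}_{\mathbb{P}^2}(3)|_{C_0} \otimes \mathcal{O}_{C_0}(-\sum_j p_j)$ corresponds, under $\mathrm{Pic}^0(C) \cong \mathbb{C}^*$, to an element of the form $c \cdot \prod_j t_j^{-1}$, where $c \in \mathbb{C}^*$ depends only on a fixed auxiliary cubic representing $\mathcal{O}_{\mathbb{P}^2}(3)|_{C_0}$. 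Hence by choosing $t_1, \ldots, t_9$ with sufficiently large (or small) modulus, keeping them distinct from each other and from the two preimages of the node, one ensures $|c \prod_j t_j^{-1}| \ne 1$, so that $N_{C/X} \notin U(1)$; equivalently, a generic choice of the $p_j$ works, since $U(1)$ is a real hypersurface of $\mathbb{C}^*$.

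With such a configuration fixed, Theorem \ref{thm:maincor}(ii) applies directly and yields that $K_X^{-1} = \mathcal{O}_X(C)$ is nef but admits no $C^\infty$ Hermitian metric with semi-positive curvature, which is exactly the desired conclusion. The main obstacle is pinning down the correspondence between nine-point configurations on $C_0^{\mathrm{sm}}$ and elements of $\mathrm{Pic}^0(C) \cong \mathbb{C}^*$: once the normalisation of $C_0$ is parametrised and the isomorphism with $\mathbb{C}^*$ is made concrete via the standard exact sequence relating $\mathrm{Pic}(C_0)$ to $\mathrm{Pic}(\mathbb{P}^1)$, this reduces to a transparent product formula, and verifying that one can escape $U(1)$ is immediate since the map from the configuration space to $\mathbb{C}^*$ is surjective.
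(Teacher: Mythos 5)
Your proposal is correct and follows the same route as the paper: the corollary is stated immediately after Theorem~\ref{thm:maincor} and is deduced from part~$(ii)$, which is exactly what you do. The paper leaves the existence of a suitable configuration implicit; you supply the explicit construction (take an irreducible nodal cubic $C_0$, parametrize $C_0^{\mathrm{sm}}\cong\mathbb{C}^*$, use the group isomorphism $\mathrm{Pic}^0(C_0)\cong\mathbb{C}^*$ with $U(1)$ as the flat locus, and note that the assignment $(t_1,\dots,t_9)\mapsto c\prod_j t_j^{-1}$ is onto $\mathbb{C}^*$, so a generic choice escapes $U(1)$), which is a correct and welcome completion of the argument.
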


Note that, though Corollary \ref{cor:q_dem} gives an answer to a weak analogue of Question \ref{q:dem},  
we can not give an answer to the original form of Question \ref{q:dem} from Theorem \ref{thm:maincor} $(ii)$. 
It is because $N_{C/X}$ is always flat if $C_0$ is non-singular, where $C$ is the strict transform of $C_0$ (see \cite[Proposition 1]{U83}). 
We also study (singular) Hermitian metrics with semi-positive curvature on $K_X^{-1}$ for arbitrary position of nine points (Theorem \ref{thm:9ptbup}). 

The goal of the present paper 
is to pose an analogue of Ueda theory (\cite{U83}, \cite{U91}) for a curve $C$ with only nodes included in a non-singular surface $X$. 
Prof. Tetsuo Ueda investigated complex analytic properties of a neighborhood of $C$ when $N_{C/X}$ is topologically trivial in the case where $C$ is non-singular (\cite{U83}) 
and the case where $C$ is a rational curve with a node (\cite{U91}). 
When $C$ is a curve with nodes, we define the ``type'' of the pair $(C, X)$ as the supremum of the set of all integers $n$ such that $u_\nu(C, X)=0$ holds for all integer $\nu<n$, 
where $u_\nu(C, X)\in H^1(C, \mathcal{O}_C(N_{C/X}^{-\nu}))$ is the class we will define in \S 3 as an analogue of Ueda's obstruction class posed in \cite{U83}. 
Before describing our main results, we first explain our notations. 
We denote by $\mathcal{P}(C)$ the set of all topologically trivial holomorphic line bundles defined on $C$ and 
by $\mathcal{P}_0(C)$ the set of all flat line bundles defined on $C$. 
We denote by $\mathcal{E}_0(C)$ the set of all torsion elements of $\mathcal{P}_0(C)$, 
and by $\mathcal{E}_1(C)$ the set of all elements $L$ of $\mathcal{P}_0(C)$ which satisfies the condition $\log d(\mathcal{O}_C, L^{n})=O(\log n)$ as $n\to\infty$, 
where $d$ is an invariant distance of $\mathcal{P}_0(C)$ ($\mathcal{E}_1(C)$ does not depend on the choice of $d$, see \cite[\S 4.1]{U83}). 
For $L\in \mathcal{P}(C)$, we denote by $\mathbb{C}(L)$ the sheaf of constant sections of $L$. 
Note that the notion of the constant section is well-defined for $L\in \mathcal{P}(C)$, 
since $L$ admit a flat connection (even when $L\not\in \mathcal{P}_0(C)$, or equivalently, 
even when $L$ admits no flat metric, see Lemma \ref{lem:topologically_trivial_lb}). 
Note also that, the sheaf $\mathbb{C}(L)$ is independent of the choice of the flat connection up to sheaf isomorphism. 

The main results of this paper are the follows. 
The first one is an analogue of \cite[Theorem 3]{U83}: 

\begin{theorem}\label{thm:main}
Let $X$ be a non-singular complex surface, $C$ be a $1$-dimensional reduced compact subvariety of $X$ with only nodes such that $N_{C/X}\in \mathcal{E}_0(C)\cup\mathcal{E}_1(C)$. 
Assume that $i^*N_{C/X}\in \mathcal{E}_0(\widetilde{C})$, where $i\colon\widetilde{C}\to C$ is the normalization of $C$. 
Assume also that $H^1(C, \mathbb{C}(N_{C/X}^{-n}))=0$ holds for each $n\in\mathbb{Z}_{>0}$. 
Then, if the pair $(C, X)$ is of infinite type, then there exists a neighborhood $V$ of $C$ in $X$ such that $\mathcal{O}_V(C)$ is flat. 
\end{theorem}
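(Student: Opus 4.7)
The plan is to follow the strategy of Ueda's original proof of \cite[Theorem 3]{U83}, suitably adapted to the nodal setting using the obstruction theory developed in \S 3. I would choose a finite open covering $\{V_j\}$ of a neighborhood of $C$ in $X$ together with local defining sections $w_j \in \Gamma(V_j, \mathcal{O}_X(C))$ such that $C \cap V_j = \{w_j = 0\}$, with coordinates adapted to the nodes. The goal is to modify the $w_j$'s inductively so that the transition cocycle $t_{ij} = w_i / w_j$ restricts on $C$ to the flat cocycle representing $N_{C/X}$ and is constant in the normal direction; this is equivalent to flatness of $\mathcal{O}_V(C)$ on the resulting neighborhood $V$.

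At each step $n$, the deviation of the $t_{ij}$ from the flat target modulo order $n+1$ in $w_j$ yields precisely the obstruction class $u_n(C, X) \in H^1(C, \mathcal{O}_C(N_{C/X}^{-n}))$ introduced in \S 3. Since $(C, X)$ is of infinite type, every $u_n$ vanishes, so one can formally improve the coordinates order by order. The hypothesis $H^1(C, \mathbb{C}(N_{C/X}^{-n})) = 0$ enters here: among the $0$-cochains trivializing $u_n$, it guarantees that one can select a canonical representative whose "flat part" is unambiguous, so that the next obstruction is again the class $u_{n+1}$ and no parasitic correction propagates. The assumption $i^* N_{C/X} \in \mathcal{E}_0(\widetilde{C})$ is what makes this canonical choice compatible across the nodes: on the normalization the normal bundle is torsion, so after passing to a cyclic cover the matching conditions at each node reduce to the trivial-normal-bundle case and can be solved uniformly in $n$.

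The main obstacle is promoting the resulting formal flattening to a convergent one. If $N_{C/X} \in \mathcal{E}_0(C)$, a cyclic base change on a neighborhood reduces the problem to the case of a holomorphically trivial normal bundle, where the infinite-type formal solution converges by standard arguments. The harder case is $N_{C/X} \in \mathcal{E}_1(C)$: the $n$-th order correction has size controlled by $d(\mathcal{O}_C, N_{C/X}^n)^{-1}$, and the Diophantine-type hypothesis $\log d(\mathcal{O}_C, N_{C/X}^n) = O(\log n)$ forces this factor to grow only polynomially in $n$, so a majorant/KAM-type estimate along the lines of \cite[\S 4]{U83} yields convergence on a full neighborhood of $C$. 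The genuinely new technical point, and the step I expect to be the most delicate, is carrying this estimate across the nodes; however, since $i^* N_{C/X}$ is already torsion on $\widetilde{C}$, the contribution of the nodes is absorbed into a finite cyclic base change and does not affect the small-divisor analysis on the smooth part in an essential way.
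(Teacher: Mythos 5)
Your high-level framework is right: fix local defining functions, modify them order by order using the vanishing of $u_n(C,X)$, and then control convergence with a Siegel-type majorant when $N_{C/X}\in\mathcal{E}_1(C)$. However, the mechanism by which you propose to use the two technical hypotheses — $i^*N_{C/X}\in\mathcal{E}_0(\widetilde{C})$ and $H^1(C,\mathbb{C}(N_{C/X}^{-n}))=0$ — does not match the paper and, as stated, has a gap.

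You invoke a ``finite cyclic base change'' to reduce to the trivial-normal-bundle case and to ``absorb'' the contribution of the nodes. This cannot be the right reduction: $N_{C/X}$ itself is only assumed to lie in $\mathcal{E}_0(C)\cup\mathcal{E}_1(C)$, and in the $\mathcal{E}_1$ case it is non-torsion, so no finite cover of a neighborhood of $C$ trivializes it. The hypothesis you have is only that the \emph{pullback} $i^*N_{C/X}$ to the \emph{normalization} is torsion, and $\widetilde{C}$ is not embedded in $X$, so there is no ambient cyclic cover over which your reduction can take place. Moreover, the small-divisor estimates in the $\mathcal{E}_1$ case have to be carried out on $C$ itself, where the cocycles live, so the nodes cannot be wished away by passing to a cover of the smooth locus.

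What the paper actually does — and what your proposal is missing — is a refinement of Ueda's $L^\infty$-estimate scheme that \emph{splits} each correction $F^j_n$ into a constant part $C^j_n$ and a gradient part $G^j_n$ (respectively $G^k_{\pm n}(x_k), G^k_{\pm n}(y_k)$ near a node). The gradient part is controlled by applying the Kodaira--Spencer norm estimate to the $1$-cocycle of exterior derivatives $\eta_n = i^*\{d(P^{kj}_n - t_{jk}^{-n+1}Q^{kj}_n)\}$ pulled back to $\widetilde{C}$, viewed in $H^1(\widetilde{C},\mathcal{O}_{\widetilde{C}}(K_{\widetilde{C}}\otimes i^*N_{C/X}^{-n+1}))$; since $i^*N_{C/X}$ is torsion, only finitely many line bundles occur, so this Kodaira--Spencer constant is bounded uniformly in $n$. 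The constant part is then controlled separately using the assumed vanishing of $H^1(C,\mathbb{C}(N_{C/X}^{-n+1}))$, with a constant that is either uniform (the $\mathcal{E}_0(C)$ case) or grows like $d(\mathcal{O}_C,N_{C/X}^{n-1})^{-1}$ (the $\mathcal{E}_1(C)$ case), and the latter is fed into the Siegel majorant. This two-piece decomposition, with the derivative piece estimated on the normalization, is the genuinely new idea here; your proposal replaces it with a base-change heuristic that does not apply in the stated generality.
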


Next one is an analogue of \cite[Theorem 1, 2]{U83}: 

\begin{theorem}\label{thm:main_finite}
Let $X$ be a non-singular complex surface, $C$ be a $1$-dimensional reduced compact subvariety of $X$ with only nodes such that $G(C)$ is a tree and $N_{C/X}=\mathcal{O}_C$, where $G(C)$ is the dual graph of $C$; i.e. $G(C)$ is the graph such that the vertex set of $G(C)$ is the set of all irreducible components of $C$ and the edge set of $G(C)$ is the set of all nodal points of $C$. 
Assume that the type of the pair $(C, X)$ is a finite number $n\in\mathbb{Z}_{>0}$. 
Assume also that $u_n(C, X)|_{C_\nu}\not=0\in H^1(C_\nu, \mathcal{O}_{C_\nu})$ holds for all irreducible component $C_\nu$ of $C$. 
Then the following holds: \\
$(i)$ For each real number $\lambda>1$, There exists  a neighborhood $V$ of $C$ and a strongly psh function $\Phi_\lambda\colon V\setminus C\to \mathbb{R}$ such that $\Phi_\lambda(p)\to\infty$ and $\Phi_\lambda(p)=O(d(p, C)^{-\lambda n})$ hold as $p\to C$, where $d(p, C)$ is the distance from $p$ to $C$ calculated by using a local Euclidean metric on a neighborhood of a point of $C$ in $V$. \\
$(ii)$ Let $V$ be a neighborhood of $C$ in $X$, $\Psi$ be a psh function defined on$V\setminus C$. 
If there exists a real number $0<\lambda<1$ such that $\Psi(p)=O(d(p, C)^{-\lambda n})$ as $p\to C$, then there exists a neighborhood $V_0$ of $C$ in $V$ such that $\Psi|_{V_0\setminus C}$ is a constant function. 
\end{theorem}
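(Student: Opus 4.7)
The plan is to adapt the method of Ueda \cite[Theorems 1, 2]{U83} to the nodal setting, substituting the obstruction class $u_n(C,X)$ constructed in \S3 for Ueda's original invariant. The first task is to build a \emph{system of defining functions of type $n$}: a Stein covering $\{U_\alpha\}$ of a neighborhood of $C$ and holomorphic $w_\alpha \in \mathcal{O}(U_\alpha)$ whose zero divisor is $C\cap U_\alpha$, such that on overlaps
\[
w_\alpha \;=\; w_\beta\bigl(1 + f_{\alpha\beta}\,w_\beta^{\,n} + O(w_\beta^{\,n+1})\bigr),
\]
where $\{f_{\alpha\beta}\}$ represents the non-zero class $u_n(C,X) \in H^1(C,\mathcal{O}_C)$. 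Such a system is produced inductively on $\nu=1,\dots,n-1$: the vanishing of $u_\nu$ at each stage allows the order-$\nu$ discrepancy to be killed by a Čech $0$-cochain, and the tree hypothesis on $G(C)$ together with the triviality $N_{C/X}=\mathcal{O}_C$ ensures that these normalizations propagate consistently across every node (at a node, local coordinates $(z_1,z_2)$ with $w=z_1z_2$ must be compatibly chosen on the two incident branches).

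For part $(i)$, I would take $\Phi_\lambda$ to be essentially $|w|^{-2\lambda n}$, where $|w|$ is patched together from the local $|w_\alpha|$; the discrepancy between charts is $1+O(|w|^n)$ and does not affect the leading asymptotics. The Levi form of $|w|^{-2\lambda n}$ in the normal direction to $C$ is automatically positive. Tangential positivity, which is the delicate point, is forced by the hypothesis $u_n|_{C_\nu}\neq 0$ on every irreducible component: the real part of the cocycle $\{f_{\alpha\beta}\}$ contributes a non-vanishing tangential term to the expansion of the Levi form, and strict positivity of the full Levi form holds once $\lambda>1$. Strong plurisubharmonicity on a punctured neighborhood of $C$ and the growth estimate $\Phi_\lambda=O(d(\cdot,C)^{-\lambda n})$ are then immediate from the construction.

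For part $(ii)$, I would argue by a Hartogs-type rigidity principle. Given $\Psi$ psh on $V\setminus C$ with $\Psi(p)=O(d(p,C)^{-\lambda n})$ for some $\lambda<1$, I would apply sub-mean-value inequalities on small circles $\{|w_\alpha|=\varepsilon\}$ transverse to $C$, combined with a Laurent-type expansion of $\Psi$ in the normal variable. The inequality $\lambda<1$ rules out any genuine singular term of order $|w|^{-n}$, while the normal form from the first paragraph rules out all singular terms of lower order; hence $\Psi$ is bounded near $C$ and extends as a psh function across it, which forces it to be constant on each leaf $\{w_\alpha = \mathrm{const}\}$ of the first-order foliation defined by the type-$n$ system. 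The non-triviality of $u_n$ on each component then couples these leaf-constants together, producing a single global constant on some $V_0\setminus C$.

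The main obstacle I anticipate is the construction and control of the type-$n$ system \emph{at the nodes}. One must verify that the inductive normalizations performed on the two smooth branches meeting at a node can be carried out in a way compatible with the nodal defining function $z_1z_2$, and that the cocycle defined in \S3 genuinely records the Čech obstruction appearing in this matching. The tree hypothesis on $G(C)$ is precisely what eliminates the global monodromy obstruction that would otherwise arise when one tries to propagate the order-$\nu$ corrections around a cycle of components, and $N_{C/X}=\mathcal{O}_C$ (not merely topologically trivial) is what permits the leading transition factor to be set equal to $1$. Once the normal form is in place, both $(i)$ and $(ii)$ reduce to local estimates modeled on Ueda's smooth argument, applied separately on the smooth strata and verified by hand in polydisc coordinates at each node.
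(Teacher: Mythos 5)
Your overall framing — adapt Ueda's construction of an exhaustion function, exploit $u_n(C,X)\neq 0$ for tangential control, and use the tree hypothesis plus $N_{C/X}=\mathcal{O}_C$ to get the setup in place — matches the paper's plan. But there are two genuine gaps. First, for $(i)$ you take $\Phi_\lambda$ to be ``essentially $|w|^{-2\lambda n}$'' and treat the inter-chart discrepancy as a harmless $1+O(|w|^n)$ perturbation. That is not enough: Ueda's construction requires a function $s$ that incorporates a $0$-cochain $g=\{g_j\}$ solving $\delta\{\overline{g}_j\}=\{w_j^{-n}-w_k^{-n}\}$ together with pluriharmonic corrections; it is the non-vanishing of $dg_j$ on each chart (forced by $u_n|_{C_\nu}\neq 0$) that makes the determinant of the Hessian of $s^{\lambda/2}$ strictly signed, and obtaining this $g$ requires the Hodge-type isomorphism $H^1(C,\mathbb{C})\cong H^1(C,\mathcal{O}_C)\oplus H^1(C,\overline{\mathcal{O}}_C)$, which is where the tree hypothesis and $N_{C/X}=\mathcal{O}_C$ actually enter (via Proposition \ref{prop:hodge}), not through ``propagating normalizations across nodes.'' More seriously, you identify the main difficulty as constructing the type-$n$ system at the nodes, but that part is routine; the genuine difficulty, and the new content of the paper's proof, is that at a node $w=xy$ one necessarily has $g(x,y)=g_1(x)+g_2(y)$, so in the coordinates $(w,z)=(xy,y)$ the derivative $g_z=(-xg_{1,x}+yg_{2,y})/y$ vanishes at the node itself. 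The Hessian of $s^{\lambda/2}$ therefore degenerates exactly at each node, and one must add a local correction $\pm\varepsilon\,\rho(x,y)\bigl(|z|^2+|w|^2/|z|^2\bigr)|w|^{-(\lambda-2)n}$ with a cut-off $\rho$ supported near the node, and re-check the signature of the Hessian after the modification. Your proposal does not detect this degeneration and so would stall precisely where the smooth-case argument breaks down.

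Second, your argument for $(ii)$ does not work as stated. From $\Psi(p)=O(d(p,C)^{-\lambda n})$ with $\lambda<1$ you conclude that $\Psi$ is bounded near $C$; that does not follow (sub-$d^{-n}$ growth is not boundedness), and even if $\Psi$ extended as a psh function across $C$, ``constant on each leaf $\{w_\alpha=\mathrm{const}\}$'' is not a consequence of being bounded psh. The paper handles $(ii)$ the same way Ueda does: it is reduced, together with $(i)$, to building a single family $\varphi_\lambda$ which is strictly psh for $\lambda>1$ (Property 2) and has a Hessian with eigenvalues of both signs at every point when $\lambda<1$ (Property 3); the rigidity in $(ii)$ then comes from the arguments of \cite[\S 3.4]{U83} using this mixed-signature exhaustion, not from a Hartogs-type extension.
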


The third one is an generalization of \cite[Theorem 1, 2]{U91}: 

\begin{theorem}\label{thm:main_2}
Let $X$ be a non-singular complex surface, $C$ be a $1$-dimensional reduced compact subvariety of $X$ with only nodes such that the dual graph $G(C)$ is a cycle graph ($G(C)$ may be the graph with one vertex and one edge) and $N_{C/X}\in\mathcal{P}(C)\setminus \mathcal{P}_0(C)$. 
Assume that the type of the pair $(C, X)$ is larger than or equal to $4$. Then the following holds: \\
$(i)$ For each real number $\lambda>1$, There exists  a neighborhood $V$ of $C$ and a strongly psh function $\Phi_\lambda\colon V\setminus C\to \mathbb{R}$ such that $\Phi_\lambda(p)\to\infty$ and $\Phi(p)=O((-\log d(p, C))^{2\lambda})$ hold as $p\to C$, where $d(p, C)$ is the distance from $p$ to $C$ calculated by using a local Euclidean metric on a neighborhood of a point of $C$ in $V$. \\
$(ii)$ Let $V$ be a neighborhood of $C$ in $X$, $\Psi$ be a psh function defined on$V\setminus C$. 
If there exists a real number $0<\lambda<1$ such that $\Psi(p)=O((-\log d(p, C))^{2\lambda})$ as $p\to C$, then there exists a neighborhood $V_0$ of $C$ in $V$ such that $\Psi|_{V_0\setminus C}$ is a constant function. 
\end{theorem}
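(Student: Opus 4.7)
The plan is to extend the Ueda template from \cite{U83,U91}, which treats a single non-singular or one-nodal curve, to a cycle of rational curves with non-flat normal bundle; the key input is the type $\ge 4$ hypothesis, which via the inductive vanishing of $u_1,u_2,u_3$ should produce a system of normalized local defining equations for $C$ in a tubular neighborhood $V$. Concretely, I would first show that there exist a finite open cover $\{U_j\}$ of such a $V$ together with local defining equations $w_j\in\mathcal O(U_j)$ of $C\cap U_j$ such that
\[
w_k = t_{jk}\,w_j + O(w_j^{\,4}) \quad\text{on } U_j\cap U_k,
\]
where $\{t_{jk}\}$ is a \v{C}ech cocycle representing $N_{C/X}$. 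Since $G(C)$ is a cycle and $N_{C/X}\notin\mathcal P_0(C)$, the cycle monodromy of $\{t_{jk}\}$ is a scalar $t\in\mathbb C^*$ with $|t|\ne 1$; assume $|t|>1$.

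For part $(i)$, set $u(p):=-\log|w_j(p)|^2$. The quartic approximation guarantees that $u$ is pluriharmonic on $V\setminus C$ modulo $O(|w|^2)$ corrections and is single-valued up to the additive ambiguity $2\log|t|$ coming from the cycle. Choosing a branch on a contractible punctured neighborhood (equivalently, passing to the $\mathbb Z$-covering), I define
\[
\Phi_\lambda := u^{2\lambda} + A\rho,
\]
where $\rho$ is a bounded smooth strictly psh ``tangential correction'' (patched locally from $\varepsilon|z|^2$'s via a partition of unity, the gluing errors being absorbed into $i\partial u\wedge\bar\partial u$ by the order-$4$ normalization) and $A>0$ is small. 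Since $u$ is pluriharmonic and positive near $C$,
\[
i\partial\bar\partial(u^{2\lambda}) = 2\lambda(2\lambda-1)\,u^{2\lambda-2}\,i\partial u\wedge\bar\partial u
\]
is a positive semi-definite $(1,1)$-form of rank $1$ (in the normal direction), and the correction $A\,i\partial\bar\partial\rho$ supplies the missing tangential positivity, yielding a strongly psh $\Phi_\lambda$. The growth $u\asymp -\log d(p,C)$ gives $\Phi_\lambda\to+\infty$ and $\Phi_\lambda=O((-\log d(p,C))^{2\lambda})$.

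For part $(ii)$, I would pull $\Psi$ back to the $\mathbb Z$-cover $V'\to V$ unwrapping the cycle monodromy, on which $w$ becomes single-valued and the deck group acts by $w\mapsto t\,w$. The deck action together with the growth hypothesis forces a Fourier-type decomposition of the pulled-back $\Psi$ in the variable $u=-\log|w|^2$; any nontrivial equivariant mode must, to match the deck action by translation in $u$, grow at least linearly in $u$, i.e.~at least like $-\log d(p,C)$, so the bound $(-\log d)^{2\lambda}$ with $2\lambda<2$ forces all such modes to vanish by a straightforward comparison. The surviving constant-mode function has only slow growth in $u$ and hence extends as a psh function across $C$ (after absorbing it by a small multiple of $\log|f|^2$ on a smaller $V_0$ so that Grauert--Remmert-type extension applies); the maximum principle on $V_0$ then makes it constant.

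The main obstacle will be the order-$4$ normalization step in the presence of nodes: on each irreducible component $C_\nu$ the solvability of the successive obstructions is governed by $H^1(C_\nu,\mathcal O_{C_\nu}(N_{C/X}^{-\nu}|_{C_\nu}))$, but at each node the two branches impose an additional matching condition on the local $w_j$ that must be solved simultaneously. Reconciling these conditions around a \emph{cycle} of components (as opposed to the single-component cases of \cite{U83,U91}) is the new ingredient; the cycle-graph assumption ensures that there is a unique way to propagate the normalization around $C$, while the type $\ge 4$ hypothesis is precisely what kills the obstructions both on the smooth strata and at the nodes. Once the normalization is in hand, both $(i)$ and $(ii)$ follow the Ueda template, with the cycle topology entering only through the monodromy scalar $t\ne 1$.
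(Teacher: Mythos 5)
There is a genuine gap in your construction of $\Phi_\lambda$, and it propagates into part $(ii)$.

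Your candidate $\Phi_\lambda = u^{2\lambda} + A\rho$ with $u = -\log|w_j|^2$ is not well-defined on $V\setminus C$. Because $G(C)$ is a cycle and $N_{C/X}\notin\mathcal P_0(C)$, one may normalize the cocycle so that $|t_{jk}|=1$ on all overlaps except one critical one where $|t_{jk_1}|=\alpha\ne 1$ (Lemma~\ref{lem:cycle}). Crossing that overlap, $\log|w_k| = \log\alpha + \log|w_j| + O(|w_j|^3)$, so $u$ jumps by the \emph{constant} $-2\log\alpha$, and $u^{2\lambda}$ jumps by $\sim 4\lambda\,u^{2\lambda-1}\log\alpha$, which is unbounded as $p\to C$. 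You therefore cannot repair this by a bounded partition-of-unity correction $A\rho$; this is not a ``gluing error absorbed by $i\partial u\wedge\bar\partial u$'' but a leading-order obstruction. The paper's $\varphi_j$ is designed precisely to cancel it: on each $V_j\subset C_\nu$ one takes $(\log|w_j|)^2 + \tfrac{N-2\nu}{N}\log\alpha\cdot\log|w_j| + \tfrac{2}{N}\log\alpha\cdot\log|f_j|$, where $\{f_j\}$ is the auxiliary system of Lemmas~\ref{lem:system_fxy}--\ref{lem:q_const}. The linear $\log|w_j|$ term shifts by $\nu$-dependent amounts around the cycle, and the pluriharmonic $\log|f_j|$ (or $-\log|y_\nu|$ at the nodes) absorbs the remaining mismatch, so that the differences $\eta_{jk}=\varphi_k-\varphi_j$ extend to $C^2$ functions vanishing on $C$ and can be glued with a partition of unity. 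Without these extra terms the construction does not close up.

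The missing terms also matter for the Hessian, and hence for part $(ii)$. If $u$ were globally defined and exactly pluriharmonic then $i\partial\bar\partial(u^{2\lambda})=2\lambda(2\lambda-1)u^{2\lambda-2}\,i\partial u\wedge\bar\partial u$ is positive semi-definite for all $\lambda>1/2$, so $u^{2\lambda}+A\rho$ would be strictly psh on that whole range, giving you no separating behavior at $\lambda=1$. In the paper the linear and $\log|f_j|$ terms enter the $z\bar z$ entry of the Hessian with a factor $(\lambda-1)$, and the determinant is $\tfrac{\lambda-1}{2}\cdot\tfrac{(\log\alpha)^2}{N^2}\cdot\tfrac{(\log|w|)^2}{|wz|^2}(1+o(1))$ (after the partition of unity); for $\lambda>1$ this is positive (Property~2), for $\lambda<1$ it is negative (Property~3). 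Part $(ii)$ is then a direct application of Ueda's dichotomy argument from \cite[\S 3.4]{U83} using Property~3. Your replacement of this step by a Fourier-mode argument on the $\mathbb Z$-cover is both vague (the deck action $w\mapsto tw$ with $|t|\ne 1$ is a dilation, not a rotation, so there is no Fourier decomposition in the usual sense) and unnecessary once the correct $\varphi_\lambda$ is in hand. In short: the idea of using the type-$\ge 4$ normalization and then $(\text{something})^{\lambda}$ is on the right track, but the ``something'' must be the carefully corrected $\varphi$, not $u^2$, and the proof of $(ii)$ should go through the indefiniteness of the Hessian for $\lambda<1$ rather than a covering-space argument.
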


\cite[Theorem 3]{U83} is shown by using $L^\infty$-norm estimates for $0$-cochains whose coboundary define the obstruction class $u_\nu(C, X)$ for each $\nu$. 
Refining this technique by considering the exterior derivatives of such $0$-cochains, we prove Theorem \ref{thm:main}. 
\cite[Theorem 1, 2]{U83} and \cite[Theorem 1, 2]{U91} are shown by constructing a suitable function $\Phi_\lambda$ on a neighborhood of the curve for each $\lambda>0$. 
We prove Theorem \ref{thm:main_finite} and Theorem \ref{thm:main_2} by generalizing this construction.

The organization of the paper is as follows. 
In \S 2, we study fundamental properties of topologically trivial holomorphic line bundles on a curve with nodes. 
In \S 3, we give the definition of the obstruction class $u_n(C, X)$ and the type of the pair $(C, X)$. 
In \S 4, we prove Theorem \ref{thm:main}. 
In \S 5, we prove Theorem \ref{thm:main_finite}. 
In \S 6, we prove Theorem \ref{thm:main_2}. 
In \S 7, we prove Theorem \ref{thm:maincor}. 
In this section, we also study singular Hermitian metrics with semi-positive curvature on the anti-canonical bundle of the blow-up of $\mathbb{P}^2$ at nine points in arbitrary position. 
\vskip3mm
{\bf Acknowledgment. } 
The author would like to give heartful thanks to 
Prof. Shigeharu Takayama whose comments and suggestions were of inestimable value for my study. 
He also thanks Prof. Tetsuo Ueda, Prof. Yoshinori Gongyo, and Dr. Yusuke Nakamura for helpful comments and warm encouragements. 
He is supported by the Grant-in-Aid for Scientific Research (KAKENHI No.25-2869) and the Grant-in-Aid for JSPS fellows. 

\section{topologically trivial holomorphic line bundles on curves with nodes}

In this section, we study fundamental properties of topologically trivial holomorphic line bundles on a curve with only nodes. 

\begin{lemma}\label{lem:topologically_trivial_lb}
Let $C$ be a compact complex curve with only nodes and $L$ be an element of $\mathcal{P}(C)$. 
Then, in a suitable local trivialization, all the transition functions of $L$ are constant functions valued in $\mathbb{C}^*$: $\mathcal{P}(C)={\rm Image}\left( H^1(C, \mathbb{C}^*)\to H^1(C, \mathcal{O}_C^*)\right)$. 
Especially it follows that $L$ admits a flat connection even when $L\not\in\mathcal{P}_0(C)$. 
\end{lemma}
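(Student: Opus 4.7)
The plan is to compare the exponential short exact sequences $0\to\mathbb{Z}\to\mathbb{C}\to\mathbb{C}^*\to 0$ (of constant sheaves) and $0\to\mathbb{Z}\to\mathcal{O}_C\to\mathcal{O}_C^*\to 0$ (in the analytic topology) via the natural inclusion $\mathbb{C}_C\hookrightarrow\mathcal{O}_C$. Taking long exact cohomology produces the commutative ladder
\[
\begin{array}{ccccccc}
H^1(C,\mathbb{Z}) & \to & H^1(C,\mathbb{C}) & \to & H^1(C,\mathbb{C}^*) & \to & H^2(C,\mathbb{Z}) \\
\| & & \downarrow & & \downarrow & & \| \\
H^1(C,\mathbb{Z}) & \to & H^1(C,\mathcal{O}_C) & \to & H^1(C,\mathcal{O}_C^*) & \to & H^2(C,\mathbb{Z}).
\end{array}
\]
By definition, $L\in\mathcal{P}(C)$ iff $c_1(L)=0\in H^2(C,\mathbb{Z})$, so a routine four-term chase using the identifications on the outer columns reduces the nontrivial inclusion $\mathcal{P}(C)\subseteq\mathrm{Image}(H^1(C,\mathbb{C}^*)\to H^1(C,\mathcal{O}_C^*))$ to the surjectivity of the middle vertical arrow $H^1(C,\mathbb{C})\to H^1(C,\mathcal{O}_C)$; the reverse inclusion is immediate by exactness.

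To establish this surjectivity I would pass to the normalization $i\colon\widetilde{C}\to C$ and write $Z\subset C$ for the set of nodes. The two natural short exact sequences
\[
0\to\mathcal{O}_C\to i_*\mathcal{O}_{\widetilde{C}}\to\bigoplus_{p\in Z}\mathbb{C}_p\to 0,\qquad
0\to\mathbb{C}_C\to i_*\mathbb{C}_{\widetilde{C}}\to\bigoplus_{p\in Z}\mathbb{C}_p\to 0,
\]
in which each quotient sends a local section to the difference of its values at the two preimages of the node, give a second cohomology ladder. Since skyscraper sheaves have vanishing higher cohomology, $H^0(\widetilde{C},\mathbb{C})=H^0(\widetilde{C},\mathcal{O}_{\widetilde{C}})$ identifies the initial columns, and $H^1(\widetilde{C},\mathbb{C})\to H^1(\widetilde{C},\mathcal{O}_{\widetilde{C}})$ is surjective by classical Hodge theory on the smooth compact curve $\widetilde{C}$, the five-lemma then yields the surjectivity of $H^1(C,\mathbb{C})\to H^1(C,\mathcal{O}_C)$.

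For the final assertion, any class in $H^1(C,\mathbb{C}^*)$ is represented by a \v{C}ech $1$-cocycle with locally constant $\mathbb{C}^*$-valued entries on a sufficiently fine open cover, yielding a trivialization of $L$ whose transition functions are constants. In such a trivialization, the exterior derivative $d$ is preserved by transitions and so defines a global flat connection on $L$. The main technical point will be setting up the normalization sequences cleanly---in particular, confirming that the stalk of $\mathcal{O}_C$ at a node $p$ is the diagonal subalgebra $\{(f_1,f_2)\in\mathcal{O}_{\widetilde{C},p_1}\oplus\mathcal{O}_{\widetilde{C},p_2}\mid f_1(p_1)=f_2(p_2)\}$, so that the skyscraper quotient is correctly described. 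Once the ladder is in place, the remainder of the argument is standard cohomological bookkeeping.
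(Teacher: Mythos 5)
Your proof is correct and reaches the same reduction as the paper---that it suffices to show $H^1(C,\mathbb{C})\to H^1(C,\mathcal{O}_C)$ is surjective---but the mechanism you use to prove that surjectivity is genuinely different in presentation. The paper proves it by a hands-on \v{C}ech argument: it pulls a $1$-cocycle back to $\widetilde{C}$, invokes Hodge theory there to write the pullback as a constant cocycle plus a coboundary $\delta\{\widetilde{F}_\nu\}$, and then constructs by hand a $0$-cochain on $C$ that pushes $\{\widetilde{F}_\nu\}$ back down, treating separately the cases $\widetilde{F}_0(\widetilde{p}_0)=\widetilde{F}_1(\widetilde{p}_1)$ and $\widetilde{F}_0(\widetilde{p}_0)\neq\widetilde{F}_1(\widetilde{p}_1)$ (in the latter case adjusting by a constant cocycle supported at the node). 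You instead package exactly this gluing step into the normalization short exact sequences $0\to\mathbb{C}_C\to i_*\mathbb{C}_{\widetilde{C}}\to\bigoplus_p\mathbb{C}_p\to 0$ and $0\to\mathcal{O}_C\to i_*\mathcal{O}_{\widetilde{C}}\to\bigoplus_p\mathbb{C}_p\to 0$, and deduce the surjectivity from the resulting cohomology ladder, a surjectivity of $H^1(\widetilde{C},\mathbb{C})\to H^1(\widetilde{C},\mathcal{O}_{\widetilde{C}})$ (Hodge), and the identity map on the skyscraper terms via a routine chase. This is a cleaner, more systematic route, and in fact it is precisely the technique the paper itself deploys later (Remark 2.3 for the injectivity of $H^1(C,\mathbb{R})\to H^1(C,\mathcal{O}_C)$, and Proposition 2.5), so your proof is stylistically more uniform with the rest of \S2. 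One small thing worth making explicit: when you say the reverse inclusion ${\rm Image}\subseteq\mathcal{P}(C)$ is ``immediate by exactness,'' you are implicitly using that $H^2(C,\mathbb{Z})\to H^2(C,\mathbb{C})$ is injective (so that a class in the image of $H^1(C,\mathbb{C}^*)\to H^2(C,\mathbb{Z})$, being killed by the next map to $H^2(C,\mathbb{C})$, must vanish); this holds because $H^2(C,\mathbb{Z})$ is torsion-free for a nodal curve, but it deserves a sentence.
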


\begin{proof}
Considering the exponential short exact sequence, it is sufficient to show that the natural map $H^1(C, \mathbb{C})\to H^1(C, \mathcal{O}_C)$ is surjective. 
For simplicity, we show this assertion only when the set $C_{\rm sing}$ of all singular points of $C$ is a unit set $\{p\}\subset C$. 
Fix a sufficiently fine open covering $\{U_j\}_{j=1}^N$ of $C$. 
We may assume that $p\in U_j\implies j=1$. 
Denote by $i\colon \widetilde{C}\to C$ the normalizations of $C$ 
and by $\{\widetilde{U}_\nu\}_{\nu=0}^N$ the open covering of $\widetilde{C}$ such that
\[
i^{-1}(U_j)= \begin{cases}
    \widetilde{U}_0 \sqcup \widetilde{U}_1 & (j=1) \\
    \widetilde{U}_j & (j>1)
  \end{cases}
\]
holds. 
Let $[\{(U_{jk}, f_{jk})\}]$ be an element of $H^1(C, \mathcal{O}_C)$ and set $\{(\widetilde{U}_{jk}, \widetilde{f}_{jk})\}:=i^*\{(U_{jk}, f_{jk})\}$. 
Then $[\{(\widetilde{U}_{jk}, \widetilde{f}_{jk})\}]$ defines an element $[\{(\widetilde{U}_{jk}, \widetilde{f}_{jk})\}]\in H^1(\widetilde{C}, \mathcal{O}_{\widetilde{C}})$. 
Considering the Hodge decomposition on $\widetilde{C}$, it turns out that
$H^1(\widetilde{C}, \mathbb{C})\to H^1(\widetilde{C}, \mathcal{O}_{\widetilde{C}})$ 
is surjective and thus there exists a $0$-cochain $\{(\widetilde{U}_j, \widetilde{F}_j)\}$ and an element $[\{(\widetilde{U}_{jk}, \widetilde{a}_{jk})\}]\in H^1(\widetilde{C}, \mathbb{C})$ such that 
$\{(\widetilde{U}_{jk}, \widetilde{f}_{jk})\}=\{(\widetilde{U}_{jk}, \widetilde{a}_{jk})\}+\delta\{(\widetilde{U}_j, \widetilde{F}_j)\}$
holds. 

Let us consider the case where  $\widetilde{F}_0(\widetilde{p}_0)=\widetilde{F}_1(\widetilde{p}_1)$ holds, 
where $\{\widetilde{p}_\ell\}=i^{-1}(p)\cap \widetilde{U}_\ell$ for $\ell=0, 1$. 
First, in this case, we will show that there exists a $0$-cocycle $\{(U_j, F_j)\}$ such that $i^*\{(U_j, F_j)\}=\{(\widetilde{U}_j, \widetilde{F}_j)\}$. 
As the construction of $F_j$ for $j\not=1$ is trivial, we only explain the construction of $F_1$. 
Let us regard $U_1$ as a neighborhood of $(0, 0)$ in the subset $\{xy=0\}\subset\mathbb{C}^2$, 
$\widetilde{F}_0$ as a function $\widetilde{F}_0(x)$ defined on $\widetilde{U_0}=$ (a neighborhood of $(0, 0)$ in $\{y=0\}$), 
and, $\widetilde{F}_1$ as a function $\widetilde{F}_1(y)$ defined on $\widetilde{U_1}=$ (a neighborhood of $(0, 0)$ in $\{x=0\}$). 
Then we can construct $F_1$ as the function $F_1(x, y):=\widetilde{F}_0(x)+\widetilde{F}_1(y)-\widetilde{F}_0(0)$, which proves the assertion. 
By using this, we obtain that $[\{(U_{jk}, f_{jk})\}]$ and $[\{(U_{jk}, a_{jk})\}]$ coincide with each other as elements of $H^1(C, \mathcal{O}_C)$, where $\{(U_{jk}, a_{jk})\}$ is an element of $H^1(C, \mathbb{C})$ such that $i^*\{(U_{jk}, a_{jk})\}=\{(\widetilde{U}_{jk}, \widetilde{a}_{jk})\}$. This shows the lemma. 

Next we consider thee case where $A:=\widetilde{F}_1(\widetilde{p}_1)-\widetilde{F}_0(\widetilde{p}_0)\not=0$. 
From the same argument as in the previous case, we can construct a $0$-cocycle $\{(U_j, F_j)\}$ such that $i^*\{(U_j, F_j)\}=\{(\widetilde{U}_j, \widetilde{F}_j+\delta_{j0}\cdot A)\}$ holds, where
\[
  \delta_{j0} := \begin{cases}
    1 & (j=0) \\
    0 & (otherwise). 
  \end{cases}
\]
Thus we can show that 
$\{(U_{jk}, f_{jk})\}-\delta\{(U_j, F_j)\}=\{(U_{jk}, a_{jk}+A_{jk})\}$ 
holds, where $\{(U_{jk}, a_{jk})\}$ and $\{(U_{jk}, A_{jk})\}$ are elements of $H^1(C, \mathbb{C})$ such that 
$i^*\{(U_{jk}, a_{jk})\}=\{(\widetilde{U}_{jk}, \widetilde{a}_{jk})\}$ and
$i^*\{(U_{jk}, A_{jk})\}=\{(\widetilde{U}_{jk}, \delta_{0j}\cdot A)\}$ holds. 
This shows the lemma. 
\end{proof}

\begin{remark}\label{rmk:tau_and_a}
Let $C$ be a compact curve with nodes and $i\colon\widetilde{C}\to C$ be the normalization of $C$. 
Let $\{U_j\}$ be an open covering of $C$ which is a finite collection of open sets. 
In the rest of this paper, we always assume that $\{U_j\}$ is fine enough to satisfy the following conditions: 
Each $U_j$ is isomorphic to an open ball in $\mathbb{C}$ or a neighborhood of $(0, 0)$ in $\{xy=0\}\subset\mathbb{C}^2$, and each $U_{jk}$ is isomorphic to an open ball in $\mathbb{C}$. 
Moreover we may assume that, for each $j, k$ such that $U_j\cap C_{\rm sing}\not=\emptyset$ and $U_k\cap C_{\rm sing}\not=\emptyset$, the intersection $U_{jk}$ is the empty set. 
Let $\{\widetilde{U}_\nu\}$ be an open covering of $\widetilde{C}$ which satisfies the following conditions: 
For $j$ such that $U_j\cap C_{\rm sing}=\emptyset$, there exists $\nu$ such that $i^{-1}(U_j)=\widetilde{U}_\nu$. 
For $U_k\cap C_{\rm sing}\not=\emptyset$, there exists $\mu, \lambda$ such that $i^{-1}(U_j)=\widetilde{U}_\mu\cup\widetilde{U}_\lambda$. 
Moreover, each $\widetilde{U}_\nu$ is isomorphic to an open ball in $\mathbb{C}$. 

From the proof of Lemma \ref{lem:topologically_trivial_lb}, it turns out that, for each $L=[\{(U_{jk}, t_{jk})\}]\in H^1(C, \mathbb{C}^*)$, there exists $\tau_{\nu \mu}\in U(1)$ and $a_\nu\in\mathbb{C}^*$ such that 
$t_{jk}=\tau_{\nu\mu}\cdot{a_\mu}/{a_\nu}$ 
holds for each $j, k, \nu, \mu$ with $i^{-1}(U_{jk})=\widetilde{U}_{\nu\mu}$. 
Moreover, we may assume that $a_\nu=1$ holds for each $\nu$ such that $\widetilde{U}_\nu=i^{-1}(U_j)$ for some $U_j$. 

Consider the case where $C$ is a rational curve with a node $p$. 
In this case, we may assume that $\tau_{\nu\mu}=1$ for each $\nu$ and $\mu$, 
and there uniquely exists an open set $U_{k_0}$ such that $p\in U_{k_0}$. 
Then the number $a_{k_0}$ or $a_{k_0}^{-1}$ coincides with the number $\alpha$ appears in \cite{U91}. 
\end{remark}

\begin{remark}\label{rmk:inj_P_0}
Let $C$ be a compact curve with nodes. 
Though the map $H^1(C, \mathbb{C}^*)\to H^1(C, \mathcal{O}_C^*)$ is not injective in general, 
the natural map $H^1(C, U(1))\to H^1(C, \mathcal{O}_C^*)$ is injective. 
For proving this fact, it is sufficient to show that the natural map $H^1(C, \mathbb{R})\to H^1(C, \mathcal{O}_C)$ is injective. 
Let $i\colon\widetilde{C}\to C$ be the normalization and $\widetilde{C}=\bigcup_{\nu=1}^N\widetilde{C}_\nu$ be the irreducible decomposition. 
By considering the short exact sequences as in the proof of Proposition \ref{prop:hodge} below, 
we obtain the following commutative diagram with exact rows: 
\[\xymatrix{
  \bigoplus_{\nu=1}^N\mathbb{R}  \ar[r]\ar[d]^\alpha  & \bigoplus_{p_j\in C_{\rm sing}}\mathbb{R}  \ar[r]\ar[d]^\beta  & H^1(C, \mathbb{R}) \ar[r]\ar[d]  & \bigoplus_{\nu=1}^N H^1(\widetilde{C}_\nu, \mathbb{R}) \ar[d]^\gamma   \\
  \bigoplus_{\nu=1}^N\mathbb{C}  \ar[r]  & \bigoplus_{p_j\in C_{\rm sing}}\mathbb{C}  \ar[r]^\delta  & H^1(C, \mathcal{O}_C) \ar[r]  & \bigoplus_{\nu=1}^N H^1(\widetilde{C}_\nu, \mathcal{O}_{\widetilde{C}_\nu}).
}\]
Taking an element $\xi$ from the kernel of the map $H^1(C, \mathbb{R})\to H^1(C, \mathcal{O}_C)$, we prove that $\xi=0$ holds. 
It follows form the Hodge decomposition theorem that the map $\gamma$ above is an isomorphism. 
Thus there exists an element $\eta\in\bigoplus_{p_j\in C_{\rm sing}}\mathbb{R}$ such that $\eta\mapsto \xi$ holds. 
As $\delta(\beta(\eta))=0$, there exists an element $\zeta\in\bigoplus_{\nu=1}^N\mathbb{C}$ such that $\zeta\mapsto \beta(\eta)$. 
Since it holds that ${\rm Re}\,(\zeta)\mapsto \eta$, we obtain that $\xi=0$ holds. 
\end{remark}

\begin{lemma}\label{lem:tree}
Let $C$ be a compact curve with nodes. 
If the dual graph $G(C)$ of $C$ is a tree, $\mathcal{P}_0(C)=\mathcal{P}(C)$ holds. 
\end{lemma}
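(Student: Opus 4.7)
The inclusion $\mathcal{P}_0(C)\subset \mathcal{P}(C)$ is automatic from $U(1)\subset\mathbb{C}^*$, so the plan is to prove the reverse: every $L\in\mathcal{P}(C)$ admits a $U(1)$-valued representative up to $\mathcal{O}_C^*$-coboundary. Given $L\in\mathcal{P}(C)$, I would use Lemma \ref{lem:topologically_trivial_lb} together with Remark \ref{rmk:tau_and_a} to represent $L$ by a \v{C}ech cocycle $\{(U_{jk},t_{jk})\}$ with $t_{jk}=\tau_{\nu\mu}\cdot a_\mu/a_\nu$, where $\tau_{\nu\mu}\in U(1)$, $a_\nu\in\mathbb{C}^*$, and the normalization $a_\nu=1$ holds whenever $\widetilde{U}_\nu=i^{-1}(U_j)$ for some $U_j$ disjoint from $C_{\rm sing}$. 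The goal is then to exhibit a $0$-cochain $\{(U_j,c_j)\}$ consisting of positive real constants $c_j$ (automatically holomorphic even when $U_j\cong\{xy=0\}$) such that $t'_{jk}:=(c_k/c_j)\,t_{jk}$ lies in $U(1)$.

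Since $c_j>0$, the condition $t'_{jk}\in U(1)$ is equivalent to $\log c_k-\log c_j=\log|a_\nu|-\log|a_\mu|$ on every $U_{jk}$ with $i^{-1}(U_{jk})=\widetilde{U}_{\nu\mu}$. As $|a_\nu|=1$ on all non-nodal branches, the equations coming from pairs of non-nodal $U_j,U_k$ force $\log c_j$ to be constant on each irreducible component $C_{\nu_*}$; call this common value $X_{\nu_*}$. The remaining equations come from intersections between a non-nodal $U_k$ and a nodal $U_{j_m}$ containing a node $p_m$ whose two branches lie in components $C_{\nu_m^+}$ and $C_{\nu_m^-}$; these determine $\log c_{j_m}$ from either branch and impose the consistency relation
\[
X_{\nu_m^+}-X_{\nu_m^-}=\log|a_{\mu_m^+}|-\log|a_{\mu_m^-}|
\]
at each node $p_m$, where $\widetilde{U}_{\mu_m^\pm}\subset i^{-1}(U_{j_m})$ are the two branches.

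This reduces the construction to a single real linear system on the dual graph $G(C)$: one equation per edge, one unknown $X_{\nu_*}$ per vertex. Since $G(C)$ is a tree, $H^1(G(C),\mathbb{R})=0$, so the system is solvable by rooting $G(C)$ at any vertex, fixing the root's value, and propagating outward along edges; acyclicity of a tree prevents any inconsistency. The resulting $c_j$'s assemble into a $0$-cochain of $\mathcal{O}_C^*$ whose coboundary transforms $\{t_{jk}\}$ into a $U(1)$-valued cocycle, giving $L\in\mathcal{P}_0(C)$. The main (and essentially only) obstacle is this combinatorial solvability, which is precisely where the tree hypothesis on $G(C)$ enters: on a cycle graph the edge equations would in general be obstructed by a non-trivial class in $H^1(G(C),\mathbb{R})$, reflecting the fact that $\mathcal{P}_0(C)\subsetneq \mathcal{P}(C)$ can genuinely occur.
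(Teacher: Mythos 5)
Your proof is correct, and it takes a genuinely different route from the paper's. The paper argues by induction on the depth of $G(C)$: the base case (depth $0$, a single non-singular component) is Ueda's result, and the inductive step is the observation that gluing two curves $C_1, C_2$ at a single point preserves the property $\mathcal{P}_0 = \mathcal{P}$. You instead work directly with the \v{C}ech data supplied by Lemma~\ref{lem:topologically_trivial_lb} and Remark~\ref{rmk:tau_and_a} and reduce the problem to killing the real class $[\{-\log|t_{jk}|\}]$ by a positive real $0$-cochain; since $|\tau_{\nu\mu}|=1$ and the $a_\nu$'s are normalized, this class restricts trivially to each $\widetilde C_\nu$, so the obstruction lives in the image of $\bigoplus_{p\in C_{\rm sing}}\mathbb{R}$ in $H^1(C,\mathbb{R})$, which is precisely $H^1(G(C),\mathbb{R})$; acyclicity of the tree then makes the edge-by-edge propagation consistent. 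Both proofs ultimately rest on Ueda's $\mathcal{P}_0(\widetilde C)=\mathcal{P}(\widetilde C)$ for the normalization, but your argument buys a cleaner explanation of exactly where the tree hypothesis is used (and why a cycle graph genuinely obstructs, as exploited later in the paper), whereas the paper's inductive formulation keeps the argument short at the cost of leaving the two-component gluing step implicit.

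One small imprecision worth smoothing: the equalities among the non-nodal $\log c_j$ within a fixed component $C_{\nu_*}$ are not forced solely by the non-nodal/non-nodal overlaps (those may not form a connected sub-nerve on their own), but by all overlaps within the component, including those through nodal charts. What is actually constant on $\widetilde C_{\nu_*}$ is the quantity $\log c_{j(\nu)}+\log|a_\nu|$, because the nerve of the covering restricted to the connected curve $\widetilde C_{\nu_*}$ is connected; your $X_{\nu_*}$ should be taken to be this constant. With that adjustment the rest of the reduction, including the per-edge consistency relation and the tree argument, goes through verbatim.
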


\begin{proof}
When the depth of $G(C)$ is equal to $0$, lemma follows from \cite[\S 1.1]{U83}. 
It also turns out that, $\mathcal{P}_0(C)=\mathcal{P}(C)$ holds when there exist $C_1, C_2\subset C$ such that $C=C_1\cup C_2$, $\#(C_1\cap C_2)=1$, and $\mathcal{P}_0(C_j)=\mathcal{P}(C_j)$ holds for $j=0, 1$. 
Thus we can show this lemma by the induction for the depth of $G(C)$. 
\end{proof}

\begin{proposition}\label{prop:hodge}
Let $C$ be a compact complex curve with only nodes and $L$ be an element of $\mathcal{P}(C)$. 
Assume one of the following conditions: \\
$(1)$ $G(C)$ is a tree and $L=\mathcal{O}_C$. \\
$(2)$ Euler number of $G(C)$ is equal to $0$, $L\not=\mathcal{O}_C$, and $i^*L=\mathcal{O}_{\widetilde{C}}$, where $i\colon\widetilde{C}\to C$ is the normalization. \\
Then the natural map $H^1(C, \mathbb{C}(L))\to H^1(C, \mathcal{O}_C(L))\oplus H^1(C, \overline{\mathcal{O}}_C(L))$ is isomorphism, 
where $\mathcal{O}_C(L)$, $\overline{\mathcal{O}}_C(L)$ is the sheaf of holomorphic, anti-holomorphic sections of $L$, respectively. 
\end{proposition}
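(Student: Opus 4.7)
My plan is to deduce the isomorphism from the classical Hodge decomposition on the smooth normalization $\widetilde{C}$ by means of the Mayer--Vietoris-type short exact sequence associated to $i\colon\widetilde{C}\to C$. For each sheaf $\mathcal{F}\in\{\mathbb{C}(L),\mathcal{O}_C(L),\overline{\mathcal{O}}_C(L)\}$ on $C$, I would first set up the short exact sequence
$$0 \to \mathcal{F} \to i_*(i^*\mathcal{F}) \to Q_\mathcal{F} \to 0,$$
where $Q_\mathcal{F}$ is a skyscraper sheaf on $C_{\rm sing}$ with $1$-dimensional stalks. Locally at a node $p$ with branches $\widetilde{p}_0,\widetilde{p}_1$ and constant transition $\tau\in\mathbb{C}^*$ (in the representation provided by Remark \ref{rmk:tau_and_a}), the subsheaf $\mathcal{F}\subset i_*(i^*\mathcal{F})$ is cut out by the single linear relation $s_0(\widetilde{p}_0)=\tau\, s_1(\widetilde{p}_1)$, so the quotient is indeed $1$-dimensional. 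Moreover the natural inclusions $\mathbb{C}(L)\hookrightarrow\mathcal{O}_C(L)$ and $\mathbb{C}(L)\hookrightarrow\overline{\mathcal{O}}_C(L)$ induce isomorphisms on the three $Q_\mathcal{F}$, fitting everything into a commutative ladder of short exact sequences.

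Taking cohomology and using that skyscraper sheaves and smooth complex curves have no $H^{\geq 2}$, I obtain the five-term exact sequences
$$0 \to H^0(C,\mathcal{F}) \to H^0(\widetilde{C}, i^*\mathcal{F}) \xrightarrow{\phi_\mathcal{F}} W \to H^1(C,\mathcal{F}) \to H^1(\widetilde{C}, i^*\mathcal{F}) \to 0,$$
where $W:=\bigoplus_{p_j\in C_{\rm sing}}\mathbb{C}\cong\mathbb{C}^E$ with $E=\#C_{\rm sing}$. The hypothesis $i^*L=\mathcal{O}_{\widetilde{C}}$ (automatic in case $(1)$) gives $H^0(\widetilde{C},i^*\mathcal{F})\cong\mathbb{C}^V$ for each of the three $\mathcal{F}$, where $V$ is the number of irreducible components of $C$.

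The core step is the analysis of $\phi_\mathcal{F}\colon\mathbb{C}^V\to\mathbb{C}^E$, which in all three cases is the same combinatorial coboundary twisted by the transition data of $L$. In case $(1)$, $G(C)$ is a connected tree so $V-E=1$, and as $L=\mathcal{O}_C$ all transition constants are $1$; then $\phi_\mathcal{F}$ is the usual coboundary on vertex functions, with kernel the constants and rank $V-1=E$, hence surjective. In case $(2)$, the Euler number condition gives $V=E$, while the assumption $L\neq\mathcal{O}_C$ together with $i^*L=\mathcal{O}_{\widetilde{C}}$ implies, via Lemma \ref{lem:topologically_trivial_lb} and Remark \ref{rmk:tau_and_a}, that the monodromy of $L$ around the unique independent cycle of $G(C)$ is a constant $\alpha\neq 1$; propagating a hypothetical nonzero section around this cycle forces the relation $c=\alpha c$, so $H^0(C,\mathcal{F})=0$, making $\phi_\mathcal{F}$ injective and hence bijective by equality of dimensions.

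In either case the five-term sequence collapses to give natural isomorphisms $H^1(C,\mathcal{F})\cong H^1(\widetilde{C},i^*\mathcal{F})$ for all three $\mathcal{F}$. The proof then concludes by invoking the classical Hodge decomposition
$$H^1(\widetilde{C},\mathbb{C}) \;\cong\; H^1(\widetilde{C},\mathcal{O}_{\widetilde{C}})\oplus H^1(\widetilde{C},\overline{\mathcal{O}}_{\widetilde{C}})$$
on the smooth compact curve $\widetilde{C}$, together with naturality of the induced ladder, yielding the desired isomorphism. The main technical obstacle I anticipate is the bookkeeping in case $(2)$: verifying that ``$L\neq\mathcal{O}_C$'' really translates into nontriviality of the cycle monodromy even when $G(C)$ carries tree-branches attached to its unique cycle, and that tree-branches do not disturb injectivity of $\phi_\mathcal{F}$. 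Both points should follow by elementary graph-theoretic induction on the number of tree-edges combined with the constant representation of $L$ from Remark \ref{rmk:tau_and_a}.
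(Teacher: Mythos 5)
Your proposal is correct, and it differs from the paper's proof precisely at the final step. Both arguments start from the same Mayer--Vietoris short exact sequence $0\to\mathcal{F}\to i_*(i^*\mathcal{F})\to Q_{\mathcal{F}}\to 0$ and the resulting five-term exact sequence in cohomology. The paper, however, then reads off only the \emph{dimension} of $H^1(C,\mathcal{F})$ from the Euler-characteristic identity $\dim H^0(C,\mathcal{F})-N+\#C_{\rm sing}=0$; it uses the Hodge decomposition on each $\widetilde{C}_\nu$ to show that $\dim H^1(C,\mathbb{C}(L))=\dim H^1(C,\mathcal{O}_C(L))+\dim H^1(C,\overline{\mathcal{O}}_C(L))$, and then separately proves \emph{injectivity} of the comparison map by citing the argument on p.~586 of Ueda's paper. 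You instead analyze the connecting map $\phi_\mathcal{F}\colon\mathbb{C}^V\to\mathbb{C}^E$ directly (surjective in the tree case by a rank count, bijective in the Euler-number-zero case because nontrivial monodromy around the unique cycle forces $H^0(C,\mathcal{F})=0$), concluding that $i^*\colon H^1(C,\mathcal{F})\to H^1(\widetilde{C},i^*\mathcal{F})$ is an isomorphism for all three sheaves, and then transfer the Hodge decomposition from $\widetilde{C}$ to $C$ by naturality of the commuting ladder. Your route is somewhat cleaner in that it establishes the isomorphism outright, without needing a separate injectivity lemma; the paper's route, on the other hand, records the explicit dimension formula, which it reuses later (e.g.\ in Corollary \ref{cor:main_P1}) to deduce vanishing of $H^1$ for cycle graphs. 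One small point of care you already flag: $\phi_\mathcal{F}$ is not literally ``the usual coboundary'' when $G(C)$ has a loop (one vertex, one edge), since the entry at that node is $(1-\tau)$ rather than a difference of two distinct vertex values; but the injectivity/surjectivity analysis goes through unchanged, and your remark about tree branches is likewise handled by the monodromy computation you sketch.
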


\begin{proof}
Denote by $\widetilde{C}=\bigcup_{\nu=1}^N \widetilde{C}_\nu$ the irreducible decomposition of $\widetilde{C}$. 
Consider the short exact sequence 
$0\to \mathbb{C}(L)\to i_*\mathbb{C}(i^*L)\to \bigoplus_{p\in C_{\rm sing}}L|_{p}\to 0$, 
where $i_*\mathbb{C}(i^*L)\to \bigoplus_{p\in C_{\rm sing}}L|_{p}$ is the map defined by $(i_*\mathbb{C}(i^*L))_p=L|_p\oplus L|_p\ni (a, b)\mapsto a-b\in L|_p$ for each $p\in C_{\rm sing}$. 
This short exact sequence induces the exact sequence 
\begin{eqnarray}
0&\to& H^0(C, \mathbb{C}(L))\to
\bigoplus_{\nu=1}^N H^0(\widetilde{C}_\nu, \mathbb{C}(i^*L|_{\widetilde{C}_\nu}))
\to \bigoplus_{p\in C_{\rm sing}}L|_p \nonumber \\
&\to& H^1(C, \mathbb{C}(L))\to\bigoplus_{\nu=1}^N H^1(\widetilde{C}_\nu, \mathbb{C}(i^*L|_{\widetilde{C}_\nu}))\to 0. \nonumber
\end{eqnarray}
As $i^*L|_{\widetilde{C}_\nu}=\mathcal{O}_{\widetilde{C}_\nu}$ holds for each $\nu$ under the assumptions $(1), (2)$, we obtain the equation 
\[
{\rm dim}\,H^1(C, \mathbb{C}(L))={\rm dim}\,H^0(C, \mathbb{C}(L))-N+\#(C_{\rm sing})+\sum_{\nu=1}^N{\rm dim}\,H^1(\widetilde{C}_\nu, \mathbb{C}(i^*L|_{\widetilde{C}_\nu})). 
\]
As ${\rm dim}\,H^0(C, \mathbb{C}(L))-N+\#(C_{\rm sing})=0$ holds under the assumptions $(1), (2)$, we obtain 
${\rm dim}\,H^1(C, \mathbb{C}(L))=\sum_{\nu=1}^N{\rm dim}\,H^1(\widetilde{C}_\nu, \mathbb{C}(i^*L|_{\widetilde{C}_\nu}))$. 
We also obtain 
${\rm dim}\,H^1(C, \mathcal{O}_C(L))=
\sum_{\nu=1}^N{\rm dim}\,H^1(\widetilde{C}_\nu, \mathcal{O}_{\widetilde{C}_\nu}(i^*L|_{\widetilde{C}_\nu}))$ and 
${\rm dim}\,H^1(C, \overline{\mathcal{O}}_C(L))=
\sum_{\nu=1}^N{\rm dim}\,H^1(\widetilde{C}_\nu, \overline{\mathcal{O}}_{\widetilde{C}_\nu}(i^*L|_{\widetilde{C}_\nu}))$ from the same argument for the sheaves ${\mathcal{O}}_C(L)$ and $\overline{\mathcal{O}}_C(L)$ instead of $\mathbb{C}(L)$. 
Since each $\widetilde{C}_\nu$ is a compact K\"ahler manifold, 
${\rm dim}\,H^1(\widetilde{C}_\nu, \mathbb{C}(i^*L|_{\widetilde{C}_\nu}))
={\rm dim}\,H^1(\widetilde{C}_\nu, \mathcal{O}_{\widetilde{C}_\nu}(i^*L|_{\widetilde{C}_\nu}))
+{\rm dim}\,
H^1(\widetilde{C}_\nu, \overline{\mathcal{O}}_{\widetilde{C}_\nu}(i^*L|_{\widetilde{C}_\nu}))$ 
holds. 
Thus we obtain the equation 
\[
{\rm dim}\,H^1(C, \mathbb{C}(L))
={\rm dim}\,H^1(C, \mathcal{O}_C(L))
+{\rm dim}\,H^1(C, \overline{\mathcal{O}}_C(L)). 
\]
By the same argument as in \cite[p. 586]{U83}, we can show that the natural map 
$H^1(C, \mathbb{C}(L))\to H^1(C, \mathcal{O}_C(L))\oplus H^1(C, \overline{\mathcal{O}}_C(L))$ is injective. 
Thus this map is isomorphism. 
\end{proof}

\section{Definition of the obstruction classes and the type for the pair $(C, X)$}

Let $X$ be a compact non-singular complex surface and $C\subset X$ be a compact complex curve with only nodes. 
Assume that $N_{C/X}\in \mathcal{P}(C)$. 
In this section, we give the definition of the obstruction class 
$u_n(C, X)\in H^1(C, \mathcal{O}_C(N_{C/X}^{-n}))$ and the type of the pair $(C, X)$ in almost the same manner as in the case where $C$ is non-singular \cite{U83}. 
Let us fix an open covering $\{U_j\}$ of $C$ as in Remark \ref{rmk:tau_and_a}. 
Fix a sufficiently small neighborhood $V$ of $C$ in $X$ and an open covering $\{V_j\}$ of $V$ such that $V_j\cap C=U_j$. 
We may assume that $V_{jk}\not=\emptyset$ only if $U_{jk}\not=\emptyset$. 

\begin{lemma}\label{lem:system_of_order_1}
Let $t_{jk}\in\mathbb{C}^*$ be constants such that $N_{C/X}=[\{(U_{jk}, t_{jk})\}]$. 
Then, by shrinking $V$ and $\{V_j\}$ if necessary, there exists a system $\{(V_j, w_j)\}$ of defining functions $w_j$ of $U_j$ on $V_j$ such that $t_{jk}w_k=w_j+O(w_j^2)$ 
holds on $V_{jk}$ for each $j, k$. 
\end{lemma}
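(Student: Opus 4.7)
The plan is to start from any initial system of defining functions and then rescale each by a nowhere-vanishing holomorphic function on $V_j$ in order to normalize the resulting transition cocycle to $\{t_{jk}\}$ along $C$; the $O(w_j^2)$ estimate will then be automatic thanks to the convention $U_{jk}\cap C_{\rm sing}=\emptyset$ from Remark \ref{rmk:tau_and_a}.

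First I would pick an initial system $\{(V_j,w_j^{(0)})\}$: if $U_j$ is smooth take any local defining function of $U_j\subset V_j$, while if $U_j$ contains a node take coordinates $(x,y)$ on $V_j$ with $C\cap V_j=\{xy=0\}$ and set $w_j^{(0)}:=xy$. The cocycle $s_{jk}^{(0)}:=w_j^{(0)}/w_k^{(0)}\in \mathcal{O}(V_{jk})^*$ represents $\mathcal{O}_X(C)\in H^1(V,\mathcal{O}_V^*)$, so its restriction $\{s_{jk}^{(0)}|_{U_{jk}}\}$ represents $N_{C/X}=[\{(U_{jk},t_{jk})\}]$ in $H^1(C,\mathcal{O}_C^*)$. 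Hence there exist $h_j\in\mathcal{O}(U_j)^*$ with $s_{jk}^{(0)}|_{U_{jk}}=t_{jk}\cdot h_j/h_k$ on $U_{jk}$.

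Next I would extend each $h_j$ to a nowhere-vanishing $H_j\in\mathcal{O}(V_j)^*$. Away from the nodes this is immediate. At a node, writing $h_j$ on $\{xy=0\}$ as a pair $(f(x),g(y))$ with $f(0)=g(0)$, the formula $H_j(x,y):=f(x)+g(y)-f(0)$ gives a holomorphic extension, and after shrinking $V_j$ we may assume $H_j$ is nowhere zero. Setting $w_j:=w_j^{(0)}/H_j$, the new ratio $w_j/w_k=s_{jk}^{(0)}\cdot H_k/H_j$ restricts to $t_{jk}$ on $U_{jk}$ by construction.

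Finally I would derive the desired expansion. By Remark \ref{rmk:tau_and_a} the intersection $U_{jk}$ meets no node, so one may shrink $V_{jk}$ to avoid $C_{\rm sing}$; then $w_j$ and $w_k$ are local coordinates transverse to $C$ on $V_{jk}$, each vanishing to first order along $U_{jk}$. With $\varphi_{jk}:=w_j/w_k-t_{jk}$, which vanishes on $U_{jk}$, the identity
\[
w_j-t_{jk}\,w_k=w_k\,\varphi_{jk}
\]
displays the left-hand side as a product of two holomorphic functions vanishing on $U_{jk}$, hence it is $O(w_j^2)$. The only point that demands care is the holomorphic extension of the $h_j$ across the nodal points of $C$; once that is in hand, the remaining cocycle-gauging and order-of-vanishing estimates are routine.
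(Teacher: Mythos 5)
Your proposal is correct and follows essentially the same route as the paper: start from an arbitrary initial system of defining functions, observe that its restricted transition cocycle represents $N_{C/X}$ and hence differs from $\{t_{jk}\}$ by a coboundary $\{h_j\}\in\breve{C}^0(\{U_j\},\mathcal{O}_C^*)$, extend each $h_j$ to a nowhere-vanishing function on $V_j$, and rescale. The only additions beyond the paper's terse argument are the explicit construction of the extension $H_j(x,y)=f(x)+g(y)-f(0)$ across a node (which mirrors the device already used in the proof of Lemma \ref{lem:topologically_trivial_lb}) and the short derivation of the $O(w_j^2)$ estimate, both of which are consistent with what the paper leaves implicit.
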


\begin{proof}
Fix a defining functions $v_j$ of $U_j$ on $V_j$ for each $j$. 
Since $[\{(U_{jk}, (v_j/v_k)|_{U_{jk}})\}]=[\{(U_{jk}, t_{jk})\}]\in H^1(\{U_{jk}\}, \mathcal{O}_C^*)$, 
there exists $0$-cochain $\{(U_j, e_j)\}\in\breve{C}^0(\{U_j\}, \mathcal{O}_C^*)$ such that $(v_j/v_k)|_{U_{jk}}=t_{jk}e_j/e_k$. 
By shrinking $V$ and $\{V_j\}$ if necessary, there exists a nowhere vanishing holomorphic function $f_j$ on $V_j$ such that $f_j|_{U_j}=e_j$ for each $j$. 
Then we can show the lemma by considering $w_j:=f_j^{-1}v_j$. 
\end{proof}

Fix $\{t_{jk}\}$, $\{w_j\}$ as in Lemma \ref{lem:system_of_order_1}. 
In the following, we always assume that $V_j\cap C_{\rm sing}=\emptyset$ whenever we consider the set $V_{jk}$. 
Let 
\begin{equation}\label{eq:exp}
t_{jk}w_k = w_j + f^{kj}_2(z_j)\cdot w_j^2 + f^{kj}_3(z_j)\cdot w_j^3 + f^{kj}_4(z_j)\cdot w_j^4 +\cdots 
\end{equation}
be the expansion of $t_{jk}w_k$ by $w_j$ on $V_{jk}$, where $z_j$ is a holomorphic function defined on $V_j$ such that $(z_j, w_j)$ is a coordinates system of $V_j$. 

\begin{definition}
We say that the system $\{w_j\}$ is of {\bf order $n$} if 
the coefficient $f^{jk}_m$ in the expansion (\ref{eq:exp}) is equal to $0$ for all $m\leq n$ on each $V_{jk}$. 
\end{definition}

\begin{proposition}\label{prop:1cocycle_u}
Assume that a system $\{w_j\}$ is of order $n$. 
Then $\{(U_{jk}, f^{kj}_{n+1}|_{U_{jk}})\}$ satisfies the cocycle condition for $\mathcal{O}_X(-nC)|_C$, where $f^{kj}_{n+1}$ is that in the expansion (\ref{eq:exp}). 
\end{proposition}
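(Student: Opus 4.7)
The plan is to verify the cocycle condition directly on each triple intersection $U_{ijk}$, by expanding $t_{ik}w_k$ in two different ways and comparing the coefficients of $w_i^{n+1}$. By the covering convention of Remark~\ref{rmk:tau_and_a} together with the standing assumption that all intersections we consider lie outside $C_{\mathrm{sing}}$, we may work on triples where $V_i$, $V_j$, $V_k$ are all smooth, so that the three expansions of the form (\ref{eq:exp}) are all available.

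The first step is to write down the order-$n$ expansions of $t_{ij}w_j$, $t_{jk}w_k$, and $t_{ik}w_k$. Combining the multiplicative cocycle $t_{ij}t_{jk}=t_{ik}$ with the $(j,k)$-expansion gives
\[
  t_{ik}w_k \;=\; t_{ij}\bigl(t_{jk}w_k\bigr) \;=\; t_{ij}w_j + t_{ij}\,f_{n+1}^{kj}(z_j)\,w_j^{n+1} + O(w_j^{n+2})
\]
on $V_{ijk}$. I would then substitute the $(i,j)$-expansion $t_{ij}w_j = w_i + f_{n+1}^{ji}(z_i)w_i^{n+1} + O(w_i^{n+2})$ and its consequence $w_j = t_{ij}^{-1}w_i + O(w_i^{n+1})$, so that $w_j^{n+1} = t_{ij}^{-(n+1)} w_i^{n+1} + O(w_i^{n+2})$. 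The coordinate $z_j$ on $V_{ij}$ is a holomorphic function of $(z_i, w_i)$ whose restriction to $U_{ij}$ depends only on $z_i$ (via the curve coordinate transition), and any $O(w_i)$ correction in $f_{n+1}^{kj}(z_j)$ is absorbed into $O(w_i^{n+2})$ once multiplied by $w_j^{n+1}$.

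Reading off the coefficient of $w_i^{n+1}$ and comparing with the $(i,k)$-expansion should yield, on $U_{ijk}$,
\[
  f_{n+1}^{ki} \;=\; f_{n+1}^{ji} + t_{ij}^{-n}\,f_{n+1}^{kj},
\]
where the rightmost term means the pullback of $f_{n+1}^{kj}|_{U_{jk}}$ to $U_{ijk}$ along the coordinate identification on $C$. Interpreting $c_{jk} := f_{n+1}^{kj}|_{U_{jk}}$ as a section of $N_{C/X}^{-n}$ represented in the trivialization of the first index $U_j$, the factor $t_{ij}^{-n}$ is exactly the change of trivialization from $U_j$ to $U_i$; consequently the displayed identity is the \v{C}ech cocycle relation $\delta\{(U_{jk}, c_{jk})\} = 0$ for $\mathcal{O}_C(N_{C/X}^{-n})$.

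I expect the only part requiring genuine care is the power-counting: one must check that every correction term truly belongs to $O(w_i^{n+2})$ so that the identity is legible at the level of the single coefficient $f_{n+1}^{\cdot\cdot}$, and one must confirm that the surviving multiplicative factor is $t_{ij}^{-n}$ rather than some other power of $t_{ij}$---this is what pins down the sheaf in which the obstruction lives. Beyond that, the computation is formally the same as in the smooth case treated in \cite{U83}, since on the triples we are entitled to consider there is no node in sight.
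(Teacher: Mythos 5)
Your argument is correct and reproduces the proposition's content by a direct triple-overlap coefficient comparison, arriving at
\[
  f_{n+1}^{ki} \;=\; f_{n+1}^{ji} + t_{ij}^{-n}\,f_{n+1}^{kj} \quad\text{on }U_{ijk},
\]
which is exactly the \v{C}ech cocycle relation for $\mathcal{O}_C(N_{C/X}^{-n})$. The paper takes a more economical route: it first records the closed-form identity
\[
  f^{kj}_{n+1}\big|_{U_{jk}} \;=\; \frac{1}{n}\bigl(w_j^{-n}-t_{jk}^{-n}w_k^{-n}\bigr)\big|_{U_{jk}},
\]
after which the cocycle condition is a one-line telescoping sum using $t_{ij}t_{jk}=t_{ik}$. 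Both are the same underlying manipulation of the expansion (\ref{eq:exp}) --- your coefficient extraction is what you get by spreading the paper's identity out over a triple --- but the paper's packaging pays off later: the very same formula is reused in Proposition~\ref{prop:order_u} (to pass from a system of order $n$ to one of order $n+1$ once $u_n=0$) and repeatedly in the $L^\infty$ estimates of \S 4, so it is worth isolating.

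One small imprecision in your setup: you cannot arrange that all three of $V_i$, $V_j$, $V_k$ miss $C_{\rm sing}$; what Remark~\ref{rmk:tau_and_a} actually guarantees is that \emph{at most one} of them can meet $C_{\rm sing}$, since any two such charts are disjoint. This is still enough for your computation: the expansions $f_{n+1}^{ji}$, $f_{n+1}^{ki}$, $f_{n+1}^{kj}$ are functions of $z_i$ or $z_j$, so you only need $V_i$ and $V_j$ to be smooth; the index $k$ enters only through the defining function $w_k$, which exists (as $xy$ in local coordinates) even when $V_k$ contains a node. The paper's standing convention that the first index of $V_{jk}$ is always taken smooth does exactly this bookkeeping, and the alternation $f_{n+1}^{jk}=-t_{jk}^{n}f_{n+1}^{kj}$ (also immediate from (\ref{eq:exp})) lets you permute labels to put the possibly singular chart in the $k$ slot.
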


\begin{proof}
From a simple calculation, we obtain the equation
\[
f^{kj}_{n+1}|_{U_{jk}}=\frac{1}{n}(w_j^{-n}-t_{jk}^{-n}w_k^{-n})|_{U_{jk}}
\]
on each $U_{jk}$, which shows the $1$-cocycle condition. 
\end{proof}

\begin{definition}
We denote by $u_n(C, X)$ the class $[\{(U_{jk}, f^{kj}_{n+1}|_{U_{jk}})\}]\in H^1(C, \mathcal{O}_X(N_{C/X}^{-n}))$. 
We say that the obstruction class $u_n(C, X)$ is {\bf well-defined} if, by shrinking $V$ and $\{V_j\}$ if necessary, there exits a system $\{w_j\}$ of order $n$. 
\end{definition}

\begin{remark}
It can be easily shown that the class $[u_n(C, X)]\in H^1(C, \mathcal{O}_X(N_{C/X}^{-n}))/\mathbb{C}^*$ does not depend on the choice of the system $\{w_j\}$ of order $n$. 
\end{remark}

\begin{proposition}\label{prop:order_u}
Assume that $u_n(C, X)$ is well-defined. 
Then, for each $m< n$, $u_m(C, X)$ is also well-defined and $u_m(C, X)=0$ holds. 
$u_n(C, X)=0$ holds if and only if $u_{n+1}(C, X)$ is well-defined. 
\end{proposition}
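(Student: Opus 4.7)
The proposition splits naturally into two assertions. The first---that $u_n(C,X)$ being well-defined implies $u_m(C,X)$ is well-defined and zero for every $m<n$---is essentially tautological from the definition. If $\{w_j\}$ is a system of order $n$, then by definition $f^{kj}_\ell\equiv 0$ on every $V_{jk}$ for all $\ell\le n$, so in particular $f^{kj}_\ell\equiv 0$ for $\ell\le m$, which says that the same system is of order $m$. Hence $u_m(C,X)$ is well-defined, and computing it from this system gives $u_m(C,X)=[\{f^{kj}_{m+1}|_{U_{jk}}\}]=[\{0\}]=0$; the conclusion is independent of the choice of defining system by the preceding Remark (the zero class is fixed by the $\mathbb{C}^*$-action).

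The second assertion, $u_n(C,X)=0\iff u_{n+1}(C,X)$ is well-defined, is the main content. The direction $(\Leftarrow)$ is again immediate: a system $\{w_j\}$ of order $n+1$ has $f^{kj}_{n+1}\equiv 0$ on $V_{jk}$, whence $u_n(C,X)=0$. For the nontrivial direction $(\Rightarrow)$, the plan is to start from a system $\{w_j\}$ of order $n$ and perturb it at next order, setting
\[
w_j':=w_j+G_j\cdot w_j^{n+1}
\]
for suitable holomorphic functions $G_j$ on $V_j$, then choose the $G_j$ to kill the $(n+1)$-st obstruction. Expanding $t_{jk}w_k'=t_{jk}w_k+t_{jk}G_k\cdot w_k^{n+1}$, substituting the order-$n$ expansion (\ref{eq:exp}) together with $w_k=t_{jk}^{-1}w_j+O(w_j^2)$ (so that $t_{jk}w_k^{n+1}=t_{jk}^{-n}w_j^{n+1}+O(w_j^{n+2})$), and matching coefficients, one finds
\[
t_{jk}w_k'=w_j+\bigl(f^{kj}_{n+1}|_{U_{jk}}+t_{jk}^{-n}G_k|_{U_{jk}}\bigr)w_j^{n+1}+O(w_j^{n+2}),
\]
while $w_j'=w_j+G_j|_{U_{jk}}w_j^{n+1}+O(w_j^{n+2})$. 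The new system is of order $n+1$ precisely when
\[
f^{kj}_{n+1}|_{U_{jk}}=G_j|_{U_{jk}}-t_{jk}^{-n}\,G_k|_{U_{jk}}\quad\text{on }U_{jk},
\]
i.e.\ when $\{f^{kj}_{n+1}|_{U_{jk}}\}$ is the \v{C}ech coboundary of the $0$-cochain $\{(U_j,G_j)\}$ valued in $\mathcal{O}_C(N_{C/X}^{-n})$.

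The hypothesis $u_n(C,X)=0$ supplies exactly such a $0$-cochain $\{G_j\}$ on the curve. By shrinking $V$ and the $V_j$'s, each $G_j$ extends holomorphically from $U_j$ to $V_j$; since only the restriction to $U_j$ enters the $(n+1)$-st Taylor coefficient on $V_{jk}$, any holomorphic extension suffices. The modified $\{w_j'\}$ remains a system of defining functions for $\{U_j\}$ (the added term vanishes to order $n+1$ on $U_j$), and by construction it is of order $n+1$, so $u_{n+1}(C,X)$ is well-defined. The only delicate point to verify is that the sign/twist convention in the coboundary matches the cocycle relation $g_{jk}+t_{jk}^{-n}g_{k\ell}=g_{j\ell}$ underlying Proposition~\ref{prop:1cocycle_u}; once that is confirmed, everything else reduces to the Taylor-expansion calculation above.
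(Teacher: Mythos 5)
Your proof is correct and follows essentially the same strategy as the paper: both perturb the order-$n$ system at the next order and choose the correction so as to realize the vanishing obstruction class as a coboundary of a $0$-cochain in $\mathcal{O}_C(N_{C/X}^{-n})$. The paper's specific choice $v_j := w_j(1-F_jw_j^n)^{-1/n}$ (so that $v_j^{-n}=w_j^{-n}-F_j$ exactly, with $G_j = \tfrac{1}{n}F_j$) makes the verification via Proposition~\ref{prop:1cocycle_u} an identity rather than a Taylor-coefficient matching, but this is a cosmetic simplification and the content is identical.
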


\begin{proof}
The first assertion is clear by definition of $u_m(C, X)$. 
Thus we here show the second assertion. 
Assuming $u_n(C, X)=0$, we will construct a new system $\{v_j\}$ of order $n+1$ by shrinking $V$ and $\{V_j\}$ if necessary. 

As $u_n(C, X)=0$ holds, there exists a $0$-cochain $\{(U_j, F_j)\}\in \breve{C}^0(\{U_j\}, \mathcal{O}_C(N_{C/X}^{-n}))$ such that 
\[
F_j-t_{jk}^{-n}F_k=(w_j^{-n}-t_{jk}^{-n}w_k^{-n})|_{U_{jk}}
\]
holds for each $j, k$. 
Extending each $F_j$ to a holomorphic function defined on $V_j$ by shrinking $V$ and $\{V_j\}$ if necessary, we define a new system $\{v_j\}$ by 
$v_j:=w_j\cdot(1-F_jw_j^n)^{-\frac{1}{n}}=w_j+\frac{1}{n}F_jw_j^{n+1}+\cdots$. 
Since $v_j=w_j+O(w_j^{n+1})$, it is clear that our new system $\{v_j\}$ is also of order $n$. 
Moreover, the equation
\[
(v_j^{-n}-t_{jk}^{-n}v_k^{-n})|_{U_{jk}}
=\left.\left((w_j-F_j)^{-n}-t_{jk}^{-n}(w_k^{-n}-F_k)\right)\right|_{U_{jk}}\equiv 0
\]
shows that the system $\{v_j\}$ is of order $n+1$ (see the proof of Proposition \ref{prop:1cocycle_u}). 
\end{proof}

By Proposition \ref{prop:order_u}, we can define the type of the pair $(C, X)$ as follows: 

\begin{definition}\label{def:order_u}
We say that $(C, X)$ is of {\bf type $n\in\mathbb{Z}_{>0}$} if $u_n(C, X)$ is well-defined and $u_n(C, X)\not=0$ holds. 
We say that $(C, X)$ is of {\bf infinite type} if $u_n(C, X)$ is well-defined and $u_n(C, X)=0$ holds for all $n\in\mathbb{Z}_{>0}$. 
\end{definition}

\section{Proof of Theorem \ref{thm:main}}

\subsection{Preliminary for the proof of Theorem \ref{thm:main}}

Let $C$ be a compact complex curve with only nodes and $i\colon\widetilde{C}\to C$ be the normalization. 
Fix an open covering $\{U_j\}$ of $C$ and $\{\widetilde{U}_\nu\}$ of $\widetilde{C}$ as in Remark \ref{rmk:tau_and_a}. 
In the proof of Theorem \ref{thm:main}, as in \cite[p. 601]{U83}, we use the function 
\[
d(\mathcal{O}_C, L):=\inf_{\{t_{jk}\}} \max_{j, k} |1-t_{jk}|. 
\]
as a distance between $\mathcal{O}_C$ and $L\in\mathcal{P}_0(C)$, where the infimum is taken over the set of all $\{t_{jk}\}\subset U(1)$ such that $L=[\{(U_{jk}, t_{jk})\}]\in H^1(C, U(1))$. 
We use the following lemmata for proving the theorem: 

\begin{lemma}\label{lem:KS_const}
For each $L\in \mathcal{P}_0(C)$, there exists a constant $K=K(L)>0$ such that the following  holds: 
For each $1$-cocycle $\alpha=\{(U_{jk}, \alpha_{jk})\}\in Z^1(\{U_j\}, \mathbb{C}(L))$ with $[\alpha]=0\in H^1(\{U_j\}, \mathbb{C}(L))$, There exists a $0$-cochain $\beta=\{(U_j, \beta_j)\}\in \breve{C}^0(\{U_j\}, \mathbb{C}(L))$ such that
$\delta \beta=\alpha$ and $\| \beta\|\leq K\|\alpha\|$ holds, where $\|\alpha\|=\max_{jk}\sup_{U_{jk}}|\alpha_{jk}|,\ \|\beta\|=\max_j\sup_{U_j}|\beta_j|$. 
\end{lemma}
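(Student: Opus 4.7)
My plan is to reduce the lemma to an open-mapping statement in finite dimensions. First I would use Lemma \ref{lem:topologically_trivial_lb} (and the conventions set in Remark \ref{rmk:tau_and_a}) to pick local trivializations of $L$ in which all transition functions are unitary constants $\tau_{\nu\mu}\in U(1)$. In such a trivialization, $L$ carries a flat Hermitian metric, and any constant section of $L$ over a connected open set has constant pointwise norm. Combined with the standing assumption that each $U_j$ and each $U_{jk}$ in the covering is connected, this tells me that an element of $\breve{C}^0(\{U_j\},\mathbb{C}(L))$ (resp.\ of $Z^1(\{U_j\},\mathbb{C}(L))$) is specified by a single complex number on each $U_j$ (resp.\ each $U_{jk}$), and the suprema $\|\beta\|=\max_j\sup_{U_j}|\beta_j|$ and $\|\alpha\|=\max_{jk}\sup_{U_{jk}}|\alpha_{jk}|$ are then genuine vector-space norms on finite-dimensional $\mathbb{C}$-vector spaces.

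Next I would consider the coboundary map
\[
\delta\colon \breve{C}^0(\{U_j\},\mathbb{C}(L)) \longrightarrow Z^1(\{U_j\},\mathbb{C}(L)),
\]
which is linear between finite-dimensional normed spaces. Its kernel is $H^0(\{U_j\},\mathbb{C}(L))$ and its image is the space $B^1(\{U_j\},\mathbb{C}(L))$ of coboundaries. The induced map
\[
\overline{\delta}\colon \breve{C}^0(\{U_j\},\mathbb{C}(L))/H^0(\{U_j\},\mathbb{C}(L)) \longrightarrow B^1(\{U_j\},\mathbb{C}(L))
\]
is a linear isomorphism of finite-dimensional normed spaces (with the quotient norm on the left), and hence $\overline{\delta}^{-1}$ is automatically bounded. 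I would take $K=K(L)$ to be the operator norm of $\overline{\delta}^{-1}$. The hypothesis $[\alpha]=0\in H^1(\{U_j\},\mathbb{C}(L))$ says precisely that $\alpha\in B^1(\{U_j\},\mathbb{C}(L))$, so $\overline{\delta}^{-1}(\alpha)$ exists and has quotient norm $\leq K\|\alpha\|$.

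Finally I would upgrade the quotient-norm estimate to a pointwise bound on an honest $0$-cochain. Since $\breve{C}^0(\{U_j\},\mathbb{C}(L))$ is finite-dimensional, for any coset $\beta_0+H^0(\{U_j\},\mathbb{C}(L))$ the infimum defining the quotient norm is attained (a closed affine subspace of a finite-dimensional normed space meets the closed ball of any radius in a compact set, on which $\|\cdot\|$ achieves its minimum). I can therefore pick $\beta\in\breve{C}^0(\{U_j\},\mathbb{C}(L))$ with $\delta\beta=\alpha$ and $\|\beta\|\leq K\|\alpha\|$, which is exactly the assertion.

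There is no real obstacle in this argument; it is essentially the finite-dimensional open mapping theorem. The only step requiring care is the very first one, namely checking that the sup-norms $\|\cdot\|$ actually descend to genuine norms on the Čech cochain spaces. This is where the hypothesis $L\in\mathcal{P}_0(C)$ (rather than merely $L\in\mathcal{P}(C)$) and the connectedness conventions on the cover from Remark \ref{rmk:tau_and_a} are used in an essential way: without the flat unitary structure the absolute value of a constant section would not be globally well-defined, and without connectedness the dimension count would require adjustment.
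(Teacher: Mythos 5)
Your argument is correct and is, in substance, the same finite-dimensional open-mapping argument that underlies the Kodaira--Spencer lemma the paper cites for this step (the paper gives no proof of its own, only the reference to \cite[Lemma~2]{KS}). The reduction to a bounded-inverse statement for the linear map $\delta$ on finite-dimensional normed spaces, with the quotient-norm infimum attained because the kernel is finite-dimensional, is exactly the right content; and you correctly identify that the flat $U(1)$ structure (so $|\beta_j|$ is constant on the connected $U_j$) and the connectedness conventions of Remark~\ref{rmk:tau_and_a} are what make $\|\cdot\|$ a genuine norm on a finite-dimensional space.
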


Lemma \ref{lem:KS_const} can be shown in the same manner as the proof of \cite[Lemma 2]{KS}. 
Next lemma is an analogue of \cite[Lemma 4]{U83}. 

\begin{lemma}\label{lem:U83_lem4_const}
There exists a constant $K=K(C)>0$ such that, 
for each $L\in\mathcal{P}_0(C)$ and 
$\beta=\{(U_j, \beta_j)\}\in \breve{C}^0(\{U_j\}, \mathbb{C}(L))$, $d(\mathcal{O}_C, L)\|\beta\|\leq K\|\delta\beta\|$ holds. 
\qed
\end{lemma}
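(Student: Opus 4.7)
The plan is to reduce to a normalized situation and then exhibit an explicit gauge change on a $U(1)$-valued cocycle for $L$ which witnesses the closeness of $L$ to $\mathcal{O}_C$. I may assume $C$ is connected (otherwise apply the argument componentwise), so the fixed finite cover $\{U_j\}$ has a nerve of bounded diameter $N=N(C)$. Both sides of the claimed inequality scale linearly in $\beta$, so I may assume $\|\beta\|=1$; the case $L=\mathcal{O}_C$ is trivial, so assume $L\in\mathcal{P}_0(C)\setminus\{\mathcal{O}_C\}$ and fix a $U(1)$-valued representative $\{t_{jk}\}$ of $L$, which exists because $L$ is flat. The identity $(\delta\beta)_{jk}=\beta_k-t_{jk}\beta_j$ combined with $|t_{jk}|=1$ gives at once $\bigl||\beta_k|-|\beta_j|\bigr|\le\|\delta\beta\|$ on every nonempty overlap $U_{jk}$.

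Next I would propagate this estimate along chains of overlaps to conclude $\bigl||\beta_j|-1\bigr|\le N\|\delta\beta\|$ for every index $j$, after picking some $j_0$ with $|\beta_{j_0}|=1$. If $\|\delta\beta\|\ge 1/(2N)$ the desired bound is automatic from $d(\mathcal{O}_C,L)\le 2$ with $K=4N$, so I may further assume we are in the regime $|\beta_j|>1/2$ for every $j$.

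In this remaining regime I set $\lambda_j:=\beta_j/|\beta_j|\in U(1)$ and consider the gauge-equivalent $U(1)$-valued cocycle $t'_{jk}:=\lambda_j t_{jk}\lambda_k^{-1}$, which still represents $L$. A direct computation substituting $t_{jk}=(\beta_k-(\delta\beta)_{jk})/\beta_j$ yields
\[
|1-t'_{jk}|=|\lambda_j t_{jk}-\lambda_k|\le\frac{|\beta_k|\cdot\bigl||\beta_j|-|\beta_k|\bigr|}{|\beta_j|\cdot|\beta_k|}+\frac{|(\delta\beta)_{jk}|}{|\beta_j|}\le\frac{2\|\delta\beta\|}{|\beta_j|}\le 4\|\delta\beta\|,
\]
whence $d(\mathcal{O}_C,L)\le\max_{j,k}|1-t'_{jk}|\le 4\|\delta\beta\|$. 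Taking $K:=\max(4,4N)$ finishes the argument.

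The argument is a direct adaptation of \cite[Lemma 4]{U83} and I do not foresee a substantive obstacle. The only point requiring a moment of care is the behavior at node points: if $U_j$ is a connected neighborhood of the origin in $\{xy=0\}\subset\mathbb{C}^2$, a constant section of $\mathbb{C}(L)$ over $U_j$ is still a single complex number, and by the convention of Remark~\ref{rmk:tau_and_a} two $U_j$'s containing nodes never overlap, so each $U_{jk}$ lies in a single smooth branch. Thus both the chain estimate and the transition-function computation above extend verbatim to the nodal case.
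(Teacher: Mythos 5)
Your argument is correct and follows essentially the same route as the paper: propagate $\bigl||\beta_j|-1\bigr|\le (\text{nerve length})\cdot\|\delta\beta\|$ along chains in the nerve graph, then pass to the gauge-normalized cocycle via $\lambda_j=\beta_j/|\beta_j|$. The only cosmetic difference is your case split on $\|\delta\beta\|\gtrless 1/(2N)$ to guarantee $|\beta_j|>1/2$; the paper avoids this by bounding $\bigl|\lambda_j-t_{jk}\lambda_k\bigr|\le(1-|\beta_j|)+\|\delta\beta\|+(1-|\beta_k|)$ directly, using only $|\beta_j|\le 1$, which yields the constant $1+2K_0$ without any regime distinction.
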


\begin{proof}
Let $G$ be the dual graph of the open covering $\{U_j\}$: 
i.e. the vertex set is the set $\{U_j\}$, 
and there exist an edge connecting $U_j$ and $U_k$ if and only if $U_{jk}\not=\emptyset$. 
Denote by $K_0$ the length of a longest path of $G$. 
We will show the inequality $d(\mathcal{O}_C, L)\leq (1+2K_0)\|\delta\beta\|$ for each $L\in\mathcal{P}_0(C)$ and 
$\beta=\{(U_j, \beta_j)\}$ such that $\|\beta\|=1$. 

Fix $j_0$ such that $|\beta_{j_0}|=1$. 
Then, for each $U_j$, there exists a path $\ell$ of $G$ from $U_{j_0}$ to $U_j$ whose length is $n\leq K_0$. 
Let $U_{\ell_0}, U_{\ell_1}, \cdots, U_{\ell_n}$ be the sequence of open sets corresponding to the path $\ell$ 
($n\leq K$, $\ell_0=j_0, \ell_n=j, U_{\ell_\nu}\cap U_{\ell_{\nu+1}}\not=\emptyset$ for each $\nu<n$). 
First, we will show that $1-|\beta_{\ell_\nu}|\leq \nu\|\delta\beta\|$ holds for each $\nu\leq n$. 
As it is clear in the case $\nu=0$, it is sufficient to show $1-|\beta_{\ell_{\nu+1}}|\leq (\nu+1)\|\delta\beta\|$ assuming that 
$1-|\beta_{\ell_\nu}|\leq \nu\|\delta\beta\|$ holds. 
Since 
$|\beta_{\ell_\nu}|\leq |\beta_{\ell_{\nu+1}}|+|\beta_{\ell_{\nu+1}}-t_{\ell_{\nu+1}\ell_\nu}\beta_{\ell_\nu}|\leq |\beta_{\ell_{\nu+1}}|+\|\delta\beta\|$, 
the assertion holds. 

Thus we obtain the inequality $1-|\beta_j|\leq K_0\|\delta\beta\|$ for all $j$. 
Therefore 
\begin{eqnarray}
d(\mathcal{O}_C, L)
&\leq&\max_{j, k}\left|\frac{\beta_j}{|\beta_j|}-t_{jk}\frac{\beta_j}{|\beta_j|}\right|\nonumber \\
&\leq&\max_{j, k}\left(\left|\frac{\beta_j}{|\beta_j|}-\beta_j\right|+
\left|\beta_j-t_{jk}\beta_k\right|+
\left|\frac{\beta_j}{|\beta_j|}-\beta_k\right|\right) \nonumber \\
&\leq& \|\delta\beta\|+2\max_j\ (1-|\beta_j|) 
\leq (1+2K_0)\|\delta\beta\| \nonumber 
\end{eqnarray}
holds, which shows the lemma. 
\end{proof}

\subsection{Proof of Theorem \ref{thm:main}}
Let $V$ be a sufficiently small neighborhood of $C$ in $X$. 
Fix an open covering $\{V_j\}$ of $V$ such that $U_j=V_j\cap C$, where $\{U_j\}$ is that in the previous subsection. 
Fix also a defining function $w_j$ of $U_j$ in $V_j$ as in Lemma \ref{lem:system_of_order_1}. 
We will prove Theorem \ref{thm:main} by constructing a new system $\{(V_j, u_j)\}$ of defining functions $u_j$ of $U_j$ in $V_j$ such that $t_{jk}u_k=u_j$ 
holds on each $V_{jk}$. 
Just as the proof of \cite[Theorem 3]{U83}, we will construct such a new system $\{(V_j, u_j)\}$ by solving the  functional equation 
\begin{equation}\label{eq:funceq}
w_j=u_j+\sum_{n=2}^\infty F^j_nu_j^n
\end{equation}
on each $V_j$ after defining a suitable holomorphic function $F_j$ on each $V_j$. 

\subsubsection{Construction of $\{F^j_n\}$}

Fix a holomorphic function $z_j$ defined on $V_j$ such that $(z_j, w_j)$ is a coordinates system of $V_j$ for each $j$ such that $V_j\cap C_{\rm sing}=\emptyset$. 
In this subsection, we show the existence of $\{F^j_\nu\}$ such that the following {\bf (Property)$_n$} holds for each integer $n\geq 2$: 

\begin{description}
\item[(Property)$_n$]  
Each $F_j$ depends only on the variable $z_j$ if $V_j\cap C_{\rm sing}=\emptyset$, 
and the system $\{(V_j, u_j)\}$ is of order $n$ if $u_j$ is a defining function of $U_j$ in $V_j$ which satisfies the equation $w_j=u_j+\sum_{\nu=2}^n F^j_\nu u_j^\nu$. 
\qed
\end{description}

We show the existence of such $\{F^j_\nu\}$ by induction for $n$. 
Assuming that there exists a system $\{F^j_\nu\}$ for $\nu=2, 3, \dots, n-1$ which satisfies (Property)$_{n-1}$, we will construct $\{F^j_n\}$ (The construction of $\{F^j_2\}$ is done in the same manner as follows). 

Consider the expansion (\ref{eq:exp}) of $t_{jk}w_k$ by $w_j$ on $V_{jk}$:
\[
t_{jk}w_k = w_j + f^{kj}_2(z_j)\cdot w_j^2 + f^{kj}_3(z_j)\cdot w_j^3 + f^{kj}_4(z_j)\cdot w_j^4 +\cdots
\]
(Recall that we are assuming that $V_j\cap C_{\rm sing}=\emptyset$ whenever we consider the set $V_{jk}$). 
Let 
\begin{equation}
F^k_\nu(w_j, z_j)=\pi_j^*\left(F^k_\nu|_{U_{jk}}\right)+\sum_{\lambda=1}^\infty F^{kj}_{\nu\lambda}(z_j)\cdot w_j^\lambda
\end{equation}
be the expansion of $F^k_\nu$ by the variable $w_j$ on $V_{jk}$, where $\pi_j\colon V_j\to U_j$ is the projection defined by $(w_j, z_j)\mapsto (0, z_j)$. 
Define the system $\{P^{kj}_\nu\}$ by
\begin{equation}\label{eq:def_P}
\sum_{\nu=2}^{n}f^{kj}_\nu(z_j)\cdot \left(w_j+\sum_{\lambda=2}^{n-1} F^j_\lambda(z_j)\cdot w_j^\lambda\right)^\nu
=\sum_{\nu=2}^{n}P^{kj}_\nu(z_j)\cdot w_j^\nu+O(w_j^{n+1}), 
\end{equation}
and the system $\{Q^{kj}_\nu\}$ by
\begin{equation}\label{eq:def_Q}
\sum_{\nu=2}^{n-1} t_{jk}^{-\nu+1}\sum_{\lambda=1}^\infty F^{kj}_{\nu\lambda}(z_j)\cdot \left(w_j+\sum_{\lambda=2}^{n-1} F^j_\lambda(z_j)\cdot w_j^\lambda\right)^\lambda\hskip-2mm\cdot w_j^\nu
=\sum_{\nu=2}^{n}Q^{kj}_\nu(z_j)\cdot w_j^\nu+O(w_j^{n+1}). 
\end{equation}
Note that each $P^{kj}_\nu$ and $Q^{kj}_\nu$ is uniquely determined after determining $\{F^j_\mu\}$ for $\mu<\nu$. 
Calculations by using (Property)$_{n-1}$ shows that the class of $H^1(C, \mathcal{O}_C(N_{C/X}^{-n+1}))$ defined by $(P^{kj}_\nu-t_{jk}^{-n+1}Q^{kj}_{\nu}) |_{U_{jk}}$ coincides with the obstruction class $u_{n-1}(C, X)$ (see \cite[\S 4.2]{U83}). 
Since $u_{n-1}(C, X)=0$, there exists a $0$-cochain $\{(U_j, f^j_n)\}$ such that 
$f^j_n-t_{jk}^{-n+1}f^k_n =(P^{kj}_\nu-t_{jk}^{-n+1}Q^{kj}_{\nu}) |_{U_{jk}}$ holds for each $j, k$. 
The calculation in \cite[\S 4.2]{U83} shows that a system $\{F^j_n\}$ satisfies (Property)$_n$ if 
$F^j_n(w_j, z_j)=f^j_n(z_j)$ for each $j$ such that $V_j\cap C_{\rm sing}=\emptyset$, and 
$F^k_n|_{U_k}=f^k_n$ holds for each $k$. 
\qed

\subsubsection{Construction of $\{u_j\}$} 

In this subsection, we prove Theorem \ref{thm:main} assuming that there exists a system 
$\{F^j_n\}_{n=2}^\infty$ which satisfies (Property)$_n$ for each $n\geq 2$ and the formal power series $X+\sum_{n=2}^\infty (\sup_{V_j}|F^j_n|)\cdot X^n$ has a positive radius of convergence 
(We will prove this assertion from the next subsection). 
For each $j$ such that $U_j\cap C_{\rm sing}=\emptyset$, we define $u_j$ by the functional equation (\ref{eq:funceq}) (It is clear that there uniquely exists a solution of the functional equation (\ref{eq:funceq}). 
Next, let us consider on $U_k$ such that $U_k\cap C_{\rm sing}=\emptyset$. 
Let $(x_k, y_k)$ be a coordinates system of $U_k$ such that $w_k=x_k\cdot y_k$. 
Define new coordinates system $(\widetilde{x}_k, \widetilde{y}_k)$ of $U_k$ by $\widetilde{x}_k=x_k$ and 
$y_k=\widetilde{y}_k+\sum_{n=2}^\infty F^k_nx_k^{n-1}\widetilde{y}_k^n$. 
Then clearly the function $u_k:=\widetilde{x}_k\cdot \widetilde{y}_k$ is a defining function of $U_k$ in $V_k$ and is a solution of the functional equation (\ref{eq:funceq}). 
From (Property)$_n$ for each $n\geq 2$, we can conclude that $t_{jk}u_k=u_j$ 
holds on each $V_{jk}$, 
which proves the theorem. 
\qed

\subsubsection{Convergence of the functional equation (\ref{eq:funceq}) for the case $N_{C/X}\in\mathcal{E}_0(C)$}\label{section:case_torsion}

In this subsection, we prove that the formal power series $X+\sum_{n=2}^\infty (\sup_{V_j}|F^j_n|)\cdot X^n$ has a positive radius of convergence for a suitable choice of $\{F^j_n\}$ when $N_{C/X}\in\mathcal{E}_0(C)$. 

First, we fix a point $p_j\in U_j$ as follows: 
For each $j$ such that $U_j\cap C_{\rm sing}=\emptyset$, we can regard $U_j$ as the ball $\{|z|<\varepsilon_j\}\subset\mathbb{C}$ for some positive number $\varepsilon_j$ via the coordinate function $z_j$. For such $j$, we define $p_j$ by $z_j(p_j)=0\in\{|z|<\varepsilon_j\}$. 
For each $k$ such that $U_k\cap C_{\rm sing}\not=\emptyset$, via a suitable coordinates system $(x_k, y_k)$ of $V_k$, $U_k$ is isomorphic to $\{x_k\cdot y_k=0, |x_k|<\varepsilon_k, |y_k|<\varepsilon_k \}\subset\mathbb{C}^2$ for some positive number $\varepsilon_k$. 
For such $k$, we define $p_k$ by $(x_k(p_k), y_k(p_k))=(0, 0)\in\{x_k\cdot y_k=0, |x_k|<\varepsilon_k, |y_k|<\varepsilon_k \}$. 
We denote by $C^j_n$ the constant $F^j_n(p_j)$ for each $j$ and $n$. 
Note that we may assume that $\varepsilon_j, \varepsilon_k<1$. 
Fix an open covering $\{U_j^*\}$ of $C$ such that $\overline{U_j^*}\subset U_j$ and 
$U_{jk}=\emptyset\ \Rightarrow U^*_{jk}=\emptyset$ holds for each $j, k$. 
Fix also an open neighborhood $V_j^*$ of $U_j^*$ in $V_j$ such that $\overline{V_j^*}\subset V_j$. 
Note that, we may assume that  the coordinate function $z_j\ (x_k, y_k)$ can be extended to the set $V_j\cup V_l^*$ ($V_k\cup V_l^*$) if $U_{jl}\not=\emptyset$ ($U_{kl}\not=\emptyset$, respectively), since $\{U_j\}$ is sufficiently fine. 
Fix a sufficiently large real number $C_0$ such that 
\[
\sup_{V_j\cap V_k^*}\left|\frac{\partial z_k}{\partial z_j}\right|\leq C_0
\]
holds for each $j, k$ with $U_k\cap C_{\rm sing}=\emptyset$, and
\[
\sup_{V_j\cap V_k^*}\left|\frac{\partial x_k}{\partial z_j}\right|\leq C_0,\ 
\sup_{V_j\cap V_k^*}\left|\frac{\partial y_k}{\partial z_j}\right|\leq C_0
\]
holds for each $j, k$ with $U_k\cap C_{\rm sing}\not=\emptyset$. 
Denote by $A(X)=X+\sum_{n=2}^\infty A_nX^n\in\mathbb{C}\{X\}$ the solution of the functional equation
\[
A(X)-X=\frac{M_0(A(X))^2}{1-R_0A(X)} 
\]
with $A'(X)=1$, where $M_0, R_0$ are sufficiently large real number (There uniquely exists such $A(X)$ for each $M_0, R_0$, and clearly has a positive radius of convergence). 
Note that $A_n\geq 0$ holds for each $n\geq 2$. 
In the rest of this subsection, we will prove that
\begin{equation}\label{eq:F_all}
\sup_{V_j}\left|F^j_\nu\right|\leq 3A_\nu
\end{equation}
holds for each $\nu\geq 2$, sufficiently large $M_0, R_0$, and suitable $\{F^j_n\}$. 
For proving this, it is sufficient to show the following claim: 

\begin{claim}\label{claim:F}
For sufficiently large $M_0, R_0$, 
there exists a system $\{F^j_n\}_{n=2}^\infty$ such that {\rm (Property)}$_n$ holds and that, 
for each $k$ such that $V_k\cap C_{\rm sing}\not=\emptyset$, 
there exists a function $G^{k}_{+n}(x_k)$ with $G^{k}_{+n}(x_k(p_k))=G^{k}_{+n}(0)=0$ and a function $G^{k}_{-n}(y_k)$ with $G^{k}_{-n}(y_k(p_k))=G^{k}_{-n}(0)=0$ such that $F^k_n(x_k, y_k)=C^k_n+G^{k}_{+n}(x_k)+G^{k}_{-n}(y_k)$ holds on $V_k$. 
Moreover, 
\begin{eqnarray}\label{eq:F}
\left|C^j_\nu\right|\leq A_\nu\ \text{and}\ 
\left|\frac{dF^j_\nu}{dz_j}\right|\leq A_\nu\ & &\text{if}\ U_j\cap C_{\rm sing}=\emptyset\\
\left|C^k_\nu\right|\leq A_\nu\ \text{and}\ 
\left|\frac{d G^k_{+\nu}}{d x_k}\right|,\ \left|\frac{dG^j_{-\nu}}{d y_k}\right|\leq A_\nu & &\text{if}\ U_k\cap C_{\rm sing}\not=\emptyset\nonumber
\end{eqnarray}
holds for each $\nu$. 
\end{claim}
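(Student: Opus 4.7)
The plan is to prove Claim~\ref{claim:F} by induction on $n$, building the system $\{F^j_n\}$ one level at a time while majorizing its supremum and derivative norms by the explicit power series $A(X)$. Since $N_{C/X}\in\mathcal{E}_0(C)$ has some finite torsion order $m$, the twisted line bundles $N_{C/X}^{-n+1}$ run through only finitely many elements of $\mathcal{P}_0(C)$ as $n$ varies. Consequently Lemma~\ref{lem:KS_const} supplies a single constant $K_0$ that controls the sup norm of any 0-cochain solution to the coboundary equation, uniformly in $n$. The constants $M_0$ and $R_0$ defining $A(X)$ will be fixed at the end of the argument, large enough to absorb $K_0$, the Jacobian bound $C_0$, the coordinate radii $\varepsilon_j$, and the shrinkage losses between $U_j^*\subset\subset U_j$ and $V_j^*\subset\subset V_j$.

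For the inductive step I assume that $\{F^j_\nu\}$ satisfying (Property)$_\nu$ and the bounds~(\ref{eq:F}) has been constructed for $\nu<n$, and I revisit the construction of $F^j_n$ given in the previous subsection. The expressions defining $\{P^{kj}_\nu\}$ and $\{Q^{kj}_\nu\}$ in~(\ref{eq:def_P})--(\ref{eq:def_Q}) are universal polynomials in the previously constructed $F^j_\mu$ and in the expansion coefficients $f^{kj}_\mu$ of the $w_j$'s, so by the classical majorant principle, $\sup_{U_{jk}}|P^{kj}_n - t_{jk}^{-n+1}Q^{kj}_n|$ is dominated by the $n$-th coefficient of the analogous scalar substitution into $A(X)$. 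Since $(C,X)$ is of infinite type, the cocycle $\{(U_{jk}, P^{kj}_n - t_{jk}^{-n+1}Q^{kj}_n)\}$ represents $u_{n-1}(C,X)=0$, and Lemma~\ref{lem:KS_const} with the uniform constant $K_0$ provides a 0-cochain $\{(U_j, f^j_n)\}$ with $\|f^j_n\|\leq K_0\,\|P^{kj}_n - t_{jk}^{-n+1}Q^{kj}_n\|$, hence a sup-norm estimate on $U_j$ that is controlled by a bounded multiple of $A_n$.

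The final step is to promote each $f^j_n$ to a holomorphic function $F^j_n$ on $V_j$ of the form demanded by the claim. For regular $U_j$, I take $F^j_n(z_j,w_j):=f^j_n(z_j)$, so that $C^j_n = f^j_n(p_j)$ automatically satisfies $|C^j_n|\leq A_n$, and a Cauchy estimate on $U_j^*\subset\subset U_j$ yields the derivative bound. For nodal $U_k$, the restriction of $f^k_n$ to $\{x_ky_k=0\}$ decomposes, exactly as in the normalization argument in the proof of Lemma~\ref{lem:topologically_trivial_lb}, into a sum $C^k_n + G^k_{+n}(x_k) + G^k_{-n}(y_k)$ with $G^k_{\pm n}(0)=0$; I then extend to the bidisc $V_k$ by the very same formula, which is holomorphic by construction. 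Cauchy estimates on each branch of $U_k$ supply the derivative bounds on $G^k_{\pm n}$.

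The main obstacle will be the numerical bookkeeping: at level $n$, the sup bound produced by the above procedure involves polynomial combinations of $A_2,\ldots,A_{n-1}$ whose coefficients depend on the Cauchy losses, on $C_0$, on $K_0$, and on the combinatorics of the covering. The task is to show that, once $M_0$ and $R_0$ are chosen sufficiently large at the start, this induced recursion is dominated coefficient-by-coefficient by the functional equation $A(X)-X = M_0 A(X)^2/(1-R_0 A(X))$ for every $n$. This is the Ueda majorant trick from \cite[\S 4.2]{U83}, with the added subtlety that at each nodal $V_k$ the estimate must be carried out separately on the two branches, and the factor of $3$ in~(\ref{eq:F_all}) must accommodate the sum $|C^k_n|+\sup|G^k_{+n}|+\sup|G^k_{-n}|$ of the three pieces in the holomorphic extension.
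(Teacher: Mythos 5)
Your proposal has the right scaffolding (induction with Siegel--Ueda majorants, treating the two branches at a node separately, allotting a factor of $3$ at the end) but it misses the central new technical idea of this proof, and the two substitutions you propose in its place do not actually work.

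First, you plan to apply Lemma~\ref{lem:KS_const} directly to the holomorphic cocycle $\{(U_{jk},\,P^{kj}_n-t_{jk}^{-n+1}Q^{kj}_n)\}$. But Lemma~\ref{lem:KS_const} as stated concerns the \emph{constant} sheaf $\mathbb{C}(L)$, not $\mathcal{O}_C(L)$. You cannot invoke the classical $L^\infty$ Kodaira--Spencer lemma for $\mathcal{O}_C(N_{C/X}^{-n+1})$ either: the whole point is that $C$ is a nodal (hence singular) curve, and the elliptic machinery behind that lemma lives on a compact manifold. Second, you propose to obtain the derivative bounds on $G^j_n$ and $G^k_{\pm n}$ from Cauchy estimates on $U_j^*\subset\subset U_j$. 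This fails to close the induction. The hypothesis~(\ref{eq:F}) demands $\bigl|\tfrac{dF^j_\nu}{dz_j}\bigr|\le A_\nu$ on all of $V_j$ (not a shrunk open set), because the estimates~(\ref{eq:def_P})--(\ref{eq:def_Q}) for the next level $n+1$ consume those derivative bounds on the full $V_{jk}$. A Cauchy estimate converts a sup bound on $U_j$ into a derivative bound only on a relatively compact subset, so iterating it level by level would require iterated shrinking, which terminates in the empty set.

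What the paper actually does is the following. It bounds $d(P^{kj}_n-t_{jk}^{-n+1}Q^{kj}_n)$ directly in Lemma~\ref{lem:PQ_derivative}, pulls these exact $1$-forms back along the normalization $i\colon\widetilde{C}\to C$ to obtain the cocycle $\eta_n$ with values in $\mathcal{O}_{\widetilde{C}}(K_{\widetilde{C}}\otimes i^*N_{C/X}^{-n+1})$ on the \emph{smooth} curve $\widetilde{C}$, applies the genuine \cite[Lemma 2]{KS} (with a constant uniform in $n$ because $i^*N_{C/X}\in\mathcal{E}_0(\widetilde{C})$) to solve $\delta\{\eta^\nu_n\}=\eta_n$ with sup-norm control, and then defines $G^j_n$ and $G^k_{\pm n}$ as primitives $\int_{p_j}^p(i^{-1})^*\eta^\nu_n$. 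This yields simultaneous sup and derivative bounds on the full $U_j$, with no shrinkage loss. The residual cocycle $\{C^{kj}_n\}$ is then genuinely \emph{constant}, so Lemma~\ref{lem:KS_const} (together with the hypothesis $H^1(C,\mathbb{C}(N_{C/X}^{-n+1}))=0$) applies to produce $\{C^j_n\}$ with a uniform constant $K_1$. In short, the paper splits the problem into a $1$-form coboundary problem on the normalization (handled by classical Kodaira--Spencer) plus a constant-sheaf coboundary problem on $C$ itself (handled by the ad hoc Lemma~\ref{lem:KS_const}); your proposal collapses this two-step decomposition into a single application of a lemma that is not available in the requisite generality, and compensates for the missing derivative control with a Cauchy estimate that cannot survive the induction.
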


\begin{remark}
Claim \ref{claim:F} actually deduces inequality (\ref{eq:F_all}). 
Indeed, for example, we can calculate 
\[
\left|G^k_{\pm\nu}(p)\right|=\left|\int_{p_k}^p\left(G^k_{\pm\nu}\right)'\right|\leq\varepsilon_k\sup\left|\left(G^k_{\pm\nu}\right)'\right|
\leq A_\nu 
\]
on each $V_k$ with $V_k\cap C_{\rm sing}\not=\emptyset$ (Recall that we are assuming that $\varepsilon_k<1$). 
Thus we obtain the inequality 
$|F^k_\nu|\leq |C^k_\nu|+|G^{k}_{+\nu}|+|G^{k}_{-\nu}|\leq 3A_\nu$. 
\end{remark}

For each $j$ such that $V_j\cap C_{\rm sing}=\emptyset$, we denote by $G^j_n=G^j_n(z_j)$ the function $F^j_n-C^j_n$. 
From now on, we will prove Claim \ref{claim:F} by induction. 
As $A_2=M_0$, Claim \ref{claim:F} for $\nu=2$ is clear. 
Assuming the existence of $\{F^j_\nu\}_{\nu=2}^{n-1}$ satisfying (Property)$_\nu$ and the inequality (\ref{eq:F}) for $\nu< n$, 
we will construct $\{F^j_{n}\}$ such that (Property)$_n$ and the inequality (\ref{eq:F}) for $\nu=n$ holds. 
First, let us fix sufficiently large real numbers $M_1$ and $R$ such that
\[
\sup_{U_{jk}}\left|f^{kj}_\nu\right|\leq M_1R^\nu,\ \sup_{U_{jk}}\left|\frac{d f^{kj}_\nu}{d z_j}\right|\leq M_1R^\nu
\]
holds for each $j, k$. Then we can show the following lemma: 

\begin{lemma}\label{lem:PQ}
Assume that $\{F^j_\nu\}_{\nu=2}^{n-1}$ satisfies (Property)$_\nu$ and the inequality (\ref{eq:F}) for $\nu< n$. 
Then, by enlarging $R$ if necessary, it holds that 
\[
\sup_{U_{jk}}\left|P^{kj}_n-t_{jk}^{-n+1}Q^{kj}_n\right|\leq \text{the coeff. of}\ X^n\ \text{in}\ \frac{M_2(A(X))^2}{1-6RA(X)}, 
\]
where $M_2:=18R(1+M_1R)$ and ``coeff.'' stands for ``coefficient''. 
\end{lemma}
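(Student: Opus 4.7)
The plan is to apply the classical method of majorant power series, exploiting the fact that $A(X) = X + \sum_{n \geq 2} A_n X^n$ has non-negative coefficients. I would bound the left-hand sides of (\ref{eq:def_P}) and (\ref{eq:def_Q}) coefficient-wise as power series in $w_j$ by explicit rational functions of $A(X)$, and then simply compare the coefficient of $X^n$ on each side of the claimed inequality.

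The first step is to majorize the substituted polynomial $Y := w_j + \sum_{\lambda=2}^{n-1} F^j_\lambda w_j^\lambda$. The inductive hypothesis of Claim \ref{claim:F} (applied for indices strictly less than $n$), together with the remark immediately after it, gives the pointwise bound $|F^j_\lambda| \leq 3 A_\lambda$ on $V_j$. Hence, viewed as a polynomial in $w_j$, the series $Y$ is majorized coefficient-wise by $X + 3 \sum_\lambda A_\lambda X^\lambda = 3 A(X) - 2 X \preceq 3 A(X)$. Combined with $|f^{kj}_\nu| \leq M_1 R^\nu$, the left-hand side of (\ref{eq:def_P}) is then majorized by
\[
\sum_{\nu=2}^{\infty} M_1 R^\nu \bigl( 3 A(X) \bigr)^\nu \;=\; \frac{9 M_1 R^2 A(X)^2}{1 - 3 R A(X)}.
\]

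For the $Q$-series I would additionally invoke a Cauchy estimate in the $w_j$-direction. Shrinking $V$ and enlarging $R$ as needed, $F^k_\nu$ is holomorphic on polydisks of $w_j$-radius $1/R$ around each point of $U_{jk}$, so the inductive bound $|F^k_\nu| \leq 3 A_\nu$ yields $|F^{kj}_{\nu\lambda}(z_j)| \leq 3 A_\nu R^\lambda$ on $U_{jk}$. Since $N_{C/X} \in \mathcal{E}_0(C) \subset \mathcal{P}_0(C)$ forces $|t_{jk}^{-\nu+1}| = 1$, the same majorization of $Y$ by $3 A(X)$ produces a majorant
\[
\sum_{\nu=2}^\infty 3 A_\nu X^\nu \cdot \sum_{\lambda=1}^\infty R^\lambda (3 A(X))^\lambda \;\preceq\; 3 A(X) \cdot \frac{3 R A(X)}{1 - 3 R A(X)} \;=\; \frac{9 R A(X)^2}{1 - 3 R A(X)}
\]
for the left-hand side of (\ref{eq:def_Q}). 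Summing the two majorants yields $\frac{9 R (1 + M_1 R) A(X)^2}{1 - 3 R A(X)}$. Since $2 \cdot 6^k \geq 3^k$ for all $k \geq 0$, one has the elementary coefficient-wise inequality $\frac{2 A(X)^2}{1 - 6 R A(X)} \succeq \frac{A(X)^2}{1 - 3 R A(X)}$, and the choice $M_2 = 18 R (1 + M_1 R) = 2 \cdot 9 R (1 + M_1 R)$ delivers the stated bound after extracting the coefficient of $X^n$.

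The step I expect to be the main obstacle is the uniform Cauchy estimate at nodal open sets. When $U_k \cap C_{\rm sing} \neq \emptyset$, the function $F^k_\nu$ decomposes as $C^k_\nu + G^k_{+\nu}(x_k) + G^k_{-\nu}(y_k)$, and expanding it in the single variable $w_j$ on $V_{jk}$ requires translating between the node-adapted coordinates $(x_k, y_k)$ and the transverse coordinates $(z_j, w_j)$. Establishing a uniform lower bound on the $w_j$-radius of a polydisk on which $F^k_\nu$ remains holomorphic and bounded by $3 A_\nu$ rests on the fine-covering set-up and on the uniform bounds $|\partial x_k / \partial z_j|, |\partial y_k / \partial z_j| \leq C_0$ introduced before the lemma; once that is in place, the remainder of the argument is a routine majorant calculation.
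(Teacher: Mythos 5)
Your proof is correct and follows the same majorant-series strategy that the paper (by citing Ueda's argument at p.~599 of \cite{U83}) relies on. The paper's own proof is a one-liner: it observes that the argument of Ueda gives the intermediate bound $2R(M_1R+1)\frac{(3A(X))^2}{1-3RA(X)}$, whereas your computation, splitting the $P$- and $Q$-contributions separately, yields the slightly sharper $\frac{9R(1+M_1R)A(X)^2}{1-3RA(X)}$; both majorize the claimed $\frac{M_2 A(X)^2}{1-6RA(X)}$ coefficient-wise since $A$ has non-negative coefficients. Your identification of the potentially delicate point — the uniform Cauchy estimate for $F^k_\nu$ in the $w_j$-direction near nodal charts, where $F^k_\nu = C^k_\nu + G^k_{+\nu}(x_k) + G^k_{-\nu}(y_k)$ must be re-expanded in $(z_j, w_j)$ — is exactly the content hidden behind the paper's hypothesis that ``$R^{-1}$ is much smaller than the diameters of $V_j$'s,'' and your appeal to the fine-covering set-up and the bound $C_0$ on $|\partial x_k/\partial z_j|$, $|\partial y_k/\partial z_j|$ (which the paper itself uses in the twin Lemma~\ref{lem:PQ_derivative}) is the right resolution. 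One cosmetic point: flatness of $t_{jk}$, i.e.\ $|t_{jk}|=1$, follows from $N_{C/X}\in\mathcal{P}_0(C)$ and is what is actually needed (so the lemma transfers verbatim to the $\mathcal{E}_1$ case in \S 4.2.4); invoking specifically $\mathcal{E}_0(C)$ is narrower than necessary but harmless.
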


\begin{proof}
Assuming that $R^{-1}$ is much smaller than the diameters of $V_j$'s, 
we obtain 
\begin{eqnarray}
\left|P^{kj}_n(p)-t_{jk}^{-n+1}Q^{kj}_n(p)\right|&\leq& 
\text{the coeff. of}\ X^n\ \text{in}\ 2R(M_1R+1)\frac{(3A(X))^2}{1-3RA(X)} \nonumber 
\end{eqnarray}
for each $p\in U_{jk}$ by the same argument as in \cite[p. 599]{U83}, which proves the lemma. 
\end{proof}

\begin{lemma}\label{lem:PQ_derivative}
Assume that $\{F^j_\nu\}_{\nu=2}^{n-1}$ satisfies (Property)$_\nu$ and the inequality (\ref{eq:F}) for $\nu< n$. Then, by enlarging $R$ if necessary, it holds that 
\[
\sup_{U_{jk}}\left|\frac{d}{d z_j}(P^{kj}_n-t_{jk}^{-n+1}Q^{kj}_n)\right|\leq \text{the coeff. of}\ X^n\ \text{in}\ \frac{M_3(A(X))^2}{1-6RA(X)}, 
\]
where $M_3:=12R(1+C_0+6RM_1)$. 
\end{lemma}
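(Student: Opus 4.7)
The plan is to differentiate the defining identities (\ref{eq:def_P}) and (\ref{eq:def_Q}) with respect to $z_j$ and mimic the majorization argument of Lemma \ref{lem:PQ}. The new inputs are the derivative bounds $|df^{kj}_\nu/dz_j|\leq M_1R^\nu$ (from the hypothesis on $R,M_1$) and, by the induction step, $|dF^j_\nu/dz_j|\leq A_\nu$; the new complication is the chain rule needed to differentiate $F^{kj}_{\nu\lambda}$ with respect to $z_j$, and this is precisely the origin of the factor $C_0$ in $M_3$.

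For the contribution coming from (\ref{eq:def_P}), applying $d/dz_j$ under the summation produces two kinds of terms: those in which the derivative hits the coefficient $f^{kj}_\nu(z_j)$, and those in which it hits the inner polynomial $w_j+\sum_{\lambda<n}F^j_\lambda w_j^\lambda$. The first kind is majorized exactly as in Lemma \ref{lem:PQ} using $|df^{kj}_\nu/dz_j|\leq M_1R^\nu$, contributing a factor proportional to $RM_1$. The second uses $|dF^j_\lambda/dz_j|\leq A_\lambda$ from the inductive hypothesis in place of $|F^j_\lambda|\leq A_\lambda$, contributing a factor proportional to $1$. Both contributions produce a majorant of the same form $c\cdot(A(X))^2/(1-6RA(X))$, and the computation is formally identical to the one on \cite[p.\,599]{U83}.

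For the contribution coming from (\ref{eq:def_Q}), one first needs to bound the Taylor coefficients $F^{kj}_{\nu\lambda}(z_j)$ and their $z_j$-derivatives on $U_{jk}$. Cauchy's estimates applied to $F^k_\nu$ on a fixed polydisc inside $V_k^*$, combined with $\sup_{V_k}|F^k_\nu|\leq 3A_\nu$ from the remark following Claim \ref{claim:F}, give $|F^{kj}_{\nu\lambda}|\leq 3A_\nu R^\lambda$ after possibly enlarging $R$. To handle $dF^{kj}_{\nu\lambda}/dz_j$ one uses the chain rule through the coordinates on $V_k$: in the smooth case $\partial/\partial z_j=(\partial z_k/\partial z_j)\,\partial/\partial z_k$, and in the nodal case $\partial/\partial z_j=(\partial x_k/\partial z_j)\,\partial/\partial x_k+(\partial y_k/\partial z_j)\,\partial/\partial y_k$. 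In both cases the coordinate derivatives are bounded by $C_0$ by construction, so $|dF^{kj}_{\nu\lambda}/dz_j|\leq 3C_0A_\nu R^\lambda$. Substituting these estimates into the formula (\ref{eq:def_Q}) defining $Q^{kj}_n$ and summing yields another majorant of the form $c'\cdot(A(X))^2/(1-6RA(X))$ whose constant $c'$ is proportional to $C_0$.

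Collecting the three contributions and adjusting the numerical constants yields the claimed bound with $M_3=12R(1+C_0+6RM_1)$, where the three summands respectively record the $dF^j_\lambda/dz_j$ terms, the chain-rule terms from the nodal coordinate expansion, and the $df^{kj}_\nu/dz_j$ terms. The main obstacle is the chain-rule bookkeeping in the nodal case: since $F^k_n=C^k_n+G^k_{+n}(x_k)+G^k_{-n}(y_k)$, the Cauchy estimate must be applied separately in the one-variable functions $G^k_{+n}$ and $G^k_{-n}$ and then reassembled inside the $w_j$-expansion of $F^k_\nu$, so one has to verify that the $A_\nu R^\lambda$-type majorant of the Taylor coefficients survives this decomposition. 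Once this is in place, the rest of the estimate is a routine majorization against $A(X)$.
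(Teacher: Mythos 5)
Your proposal follows the same route as the paper's proof: differentiate the majorization inputs to $P^{kj}_n$ and $Q^{kj}_n$ coming from (\ref{eq:def_P}) and (\ref{eq:def_Q}), use $|df^{kj}_\nu/dz_j|\leq M_1R^\nu$ and the inductive bound on $|dF^j_\mu/dz_j|$ for the $P$-part, and trace the chain-rule factor $C_0$ through the expansion coefficients $F^{kj}_{\nu\lambda}$ for the $Q$-part. The correct identification of the three contributions to $M_3$ — the $dF^j_\mu/dz_j$ terms, the nodal chain-rule terms, and the $df^{kj}_\nu/dz_j$ terms — confirms you have the right structure. Two details you gloss over deserve attention, because the paper handles them explicitly.

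First, you obtain the chain-rule bound only after applying Cauchy estimates on a polydisc inside $V_k^*$ and using $|\partial z_k/\partial z_j|$, $|\partial x_k/\partial z_j|$, $|\partial y_k/\partial z_j|\leq C_0$, which by construction is guaranteed only on $V_j\cap V_k^*$. This gives the bound on $U_j\cap U_k^*$, not on all of $U_{jk}$. A point $p\in U_{jk}$ may lie outside $U_k^*$; to reach such $p$ the paper chooses $l$ with $p\in U_l^*$, writes $P^{kj}_n-t_{jk}^{-n+1}Q^{kj}_n$ as $(P^{lj}_n-t_{jl}^{-n+1}Q^{lj}_n)+ (P^{lk}_n-t_{kl}^{-n+1}Q^{lk}_n)$ (up to the obvious transition factor), and estimates each piece on $U_\cdot\cap U_l^*$. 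This doubling is precisely what turns the intermediate constant $6R(6RM_1+1+C_0)$ into $M_3=12R(6RM_1+1+C_0)$. Attributing the factor $12$ merely to "adjusting constants" hides the one place the 1-cocycle structure is genuinely used.

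Second, when you differentiate (\ref{eq:def_Q}) in $z_j$, there is not only the term in which $d/dz_j$ lands on $F^{kj}_{\nu\lambda}$ (producing the $C_0$ factor), but also a term in which $d/dz_j$ lands on the inner polynomial $(w_j+\sum_\mu F^j_\mu w_j^\mu)^\lambda$, producing the factor $\lambda$ in the paper's $(2C_0+\lambda)A_\nu R^\lambda$ bound and requiring the inductive derivative bound on $dF^j_\mu/dz_j$. You do not mention this term at all; without it, the majorant for $dQ^{kj}_n/dz_j$ is incomplete. The mechanism the paper uses to make the Cauchy step legitimate is also worth stating cleanly: one commutes $\partial_{z_j}\partial_{w_j}^\lambda = \partial_{w_j}^\lambda\partial_{z_j}$, applies the chain rule to $\partial_{z_j}F^k_\nu$ using the decomposition $F^k_\nu=C^k_\nu+G^k_{+\nu}(x_k)+G^k_{-\nu}(y_k)$ (so the bound is $2C_0A_\nu$, not $3C_0A_\nu$: the constant $C^k_\nu$ drops out and each one-variable piece contributes $A_\nu$), and only then applies the $R^\lambda$ Cauchy estimate in $w_j$.

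None of this changes the basic approach, which is the paper's, but both points need to be filled in for the proof to close.
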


\begin{proof}
From the equation (\ref{eq:def_P}), we obtain that
\begin{eqnarray}
\left|\frac{d P^{kj}_n}{d z_j}\right|
&\leq& 
\text{the coeff. of}\ X^n\ \text{in}\ \sum_{\nu=2}^{n}\left|\frac{d f^{kj}_\nu(z_j)}{d z_j}\right|\cdot \left(X+\sum_{\lambda=2}^{n-1} |F^j_\lambda| X^\lambda\right)^\nu \nonumber \\
& &\ \ \ \ \ \ \ \ \ \ +\sum_{\nu=2}^{n}\nu\left|f^{kj}_\nu(z_j)\right|\cdot \left(X+\sum_{\lambda=2}^{n-1} |F^j_\lambda| X^\lambda\right)^{\nu-1} \cdot \left(\sum_{\lambda=2}^{n-1} \left|\frac{d F^j_\lambda}{d z_j}\right| X^\lambda\right)\nonumber \\
&\leq& 
\text{the coeff. of}\ X^n\ \text{in}\ \sum_{\nu=2}^\infty \left(1+\frac{\nu}{3}\right)M_1R^\nu\cdot (3A(X))^\nu \nonumber  \\
&\leq& 
\text{the coeff. of}\ X^n\ \text{in}\ \sum_{\nu=2}^\infty 2^\nu M_1R^\nu\cdot (3A(X))^\nu \nonumber  \\
&\leq& 
\text{the coeff. of}\ X^n\ \text{in}\ \frac{M_1R^2(6A(X))^2}{1-6RA(X)} \nonumber 
\end{eqnarray}
(Recall that $V_j\cap C_{\rm sing}=\emptyset$). 
Assuming that $R^{-1}$ is much smaller than the diameters of $V_j$'s, 
we also obtain 
\[
\sup_{U_j\cap U_{k}^*}\left|\frac{\partial F^{kj}_{\nu\lambda}}{\partial z_j}\right| = \sup_{U_j\cap U_{k}^*}\left|\frac{1}{\lambda!}\frac{\partial}{\partial z_j}\frac{\partial^\lambda F^{k}_{\nu}}{\partial w_j^\lambda}\right|
= \sup_{U_j\cap U_{k}^*}\left|\frac{1}{\lambda!}\frac{\partial^\lambda}{\partial w_j^\lambda}\frac{\partial F^{k}_{\nu}}{\partial z_j}\right|
\leq R^\lambda\sup_{V_j\cap V^*_{k}}\left|\frac{\partial F^{k}_\nu}{\partial z_j}\right|, 
\]
which implies the inequality 
\[
\sup_{U_j\cap U_{k}^*}\left|\frac{\partial F^{kj}_{\nu\lambda}}{\partial z_j}\right| 
\leq 2C_0A_\nu R^\lambda. 
\]
Thus we can deduce the inequality
\begin{eqnarray}
\left|\frac{d Q^{kj}_n}{d z_j}\right|
& = &
\text{the coeff. of}\ X^n\ \text{in}\ \sum_{\nu=2}^{\infty} \sum_{\lambda=1}^\infty (2C_0+\lambda)A_\nu R^\lambda\cdot (3A(X))^\lambda\cdot X^\nu \nonumber \\
&\leq&
\text{the coeff. of}\ X^n\ \text{in}\ \sum_{\nu=2}^{\infty} \sum_{\lambda=1}^\infty (1+C_0)2^\lambda A_\nu R^\lambda\cdot (3A(X))^\lambda\cdot X^\nu \nonumber \\
&\leq& 
\text{the coeff. of}\ X^n\ \text{in}\ 6R(1+C_0)\frac{(A(X))^2}{1-6RA(X)} \nonumber 
\end{eqnarray}
on ${U_j\cap U_{k}^*}$ from the equation (\ref{eq:def_Q}). 
Therefore, 
\[
\sup_{U_j\cap U_{k}^*}\left|\frac{d}{d z_j}(P^{kj}_n-t_{jk}^{-n+1}Q^{kj}_n)\right|\leq \text{the coeff. of}\ X^n\ \text{in}\ 6R(6RM_1+1+C_0)\frac{(A(X))^2}{1-6RA(X)}
\]
holds. For each point $p\in U_{jk}$, by using $l$ such that $p\in U_l^*$, we can calculate 
\begin{eqnarray}
\left|\frac{d}{d z_j}(P^{kj}_n-t_{jk}^{-n+1}Q^{kj}_n)(p)\right|&\leq& 
\left|\frac{d}{d z_j}(P^{lj}_n-t_{jl}^{-n+1}Q^{lj}_n)(p)\right|+\left|\frac{d}{d z_j}(P^{lk}_n-t_{kl}^{-n+1}Q^{lk}_n)(p)\right| \nonumber \\
&\leq& 
\text{the coeff. of}\ X^n\ \text{in}\ 12R(6RM_1+1+C_0)\frac{(A(X))^2}{1-6RA(X)}, \nonumber 
\end{eqnarray}
which proves the lemma. 
\end{proof}

Let us fix a $C^\infty$ Hermitian metric $h$ of $K_{\widetilde{C}}$, where $i\colon\widetilde{C}\to C$ is the normalization, 
and consider $\eta_n:=i^*\{(U_{jk}, d(P^{kj}_n-t_{jk}^{-n+1}Q^{kj}_n))\}$. 
The class of $H^1(\widetilde{C}, \mathcal{O}_{\widetilde{C}}(K_{\widetilde{C}}\otimes i^*N_{C/X}^{-n+1}))$ defined by $\eta_n$ is the trivial one. 
Thus, by Lemma \ref{lem:PQ_derivative}, there exists a positive constant $M_4$ depends only on $M_3$ and $h$ such that
\[
\|\eta_n\|_h:=\max_{\nu, \mu}\sup_{\widetilde{U}_{\nu\mu}}|\eta_n^{\nu\mu}|_h\leq \text{the coeff. of}\ X^n\ \text{in}\ \frac{M_4(A(X))^2}{1-6RA(X)}
\]
holds, where $\{\widetilde{U}_\nu\}$ is the open cover of $\widetilde{C}$ as in Remark \ref{rmk:tau_and_a} and 
$\eta_n^{\nu\mu}$ is the $1$-form such that $i^*\{(U_{jk}, d(P^{kj}_n-t_{jk}^{-n+1}Q^{kj}_n))\}=\{(\widetilde{U}_{\nu\mu}, \eta_n^{\nu\mu})\}$ holds. 
Thus we can deduce from \cite[Lemma 2]{KS} that there exists a positive real number $K$ and $\{(\widetilde{U}_\nu, \eta^\nu_n)\}\in \breve{C}^0(\{\widetilde{U}_\nu\}, \mathcal{O}_{\widetilde{C}}(K_{\widetilde{C}}\otimes i^*N_{C/X}^{-n+1}))$ such that 
\[
\eta^\nu_n-t_{jk}^{-n+1}\eta^\mu_n=\eta^{\nu\mu}_n=(i|_{\widetilde{U}_{\nu\mu}})^*d(P^{kj}_n-t_{jk}^{-n+1}Q^{kj}_n),\ \sup_{\widetilde{U}_\nu}|\eta^\nu_n|_h\leq K\|\eta_n\|_h
\]
holds, where $i^{-1}(U_{jk})=\widetilde{U}_{\nu\mu}$. 
Note that $K$ can be taken as a constant which does not depend on $n$ (Here we used the assumption that $i^*N_{C/X}\in \mathcal{E}_0(\widetilde{C})$). 

Now we start constructing functions $G_n^j$, $G_{\pm n}^j$. 
First we consider on $V_j$ such that $U_j\cap C_{\rm sing}=\emptyset$. 
In this case, $U_j$ is isomorphic to $\widetilde{U}_\nu$ for some $\nu$ via $i$. 
We define $G^j_n$ as the extension of the function 
\[
G^j_n(p):=\int_{p_j}^p(i|_{\widetilde{U}_\nu}^{-1})^*\eta^\nu_n
\]
defined on $U_j$. 
Next we consider on $V_k$ such that $U_k\cap C_{\rm sing}\not=\emptyset$. 
In this case, $U_{k}$ is decomposed to two irreducible component $\{x_k=0\}$ and $\{y_k=0\}$. Take $\nu, \mu$ such that $i^{-1}(\{x_k=0\})=\widetilde{U}_\nu$ and $i^{-1}(\{y_k=0\})=\widetilde{U}_\mu$ holds. 
We define the function $G^k_{+n}$ as the extension of the function 
\[
G^k_{+n}(p):=\int_{p_k}^p(i|_{\widetilde{U}_\mu}^{-1})^*\eta^\mu_n
\]
defined on $\{y_k=0\}$, and 
define the function $G^k_{-n}$ as the extension of the function 
\[
G^k_{-n}(p):=\int_{p_k}^p(i|_{\widetilde{U}_\nu}^{-1})^*\eta^\nu_n
\]
defined on $\{x_k=0\}$. 
By constructions, there clearly exists a positive real number $C_1$ depending only on $h$ such that 
$\sup_{U_j}|G^j_n|\leq C_1K\|\eta_n\|_h$ 
($\sup_{U_k}|G^k_{\pm n}|\leq C_1K\|\eta_n\|_h$) 
holds. 
Thus it holds that 
\begin{equation}\label{eq:M_5}
\sup_{U_j}|G^j_n|,\ \sup_{U_k}|G^k_{\pm n}|\leq \text{the coeff. of}\ X^n\ \text{in}\ \frac{M_5(A(X))^2}{1-6RA(X)}, 
\end{equation}
where $M_5:=C_1KM_4$. 

Next, we give the construction of $C^j_n$. Clearly 
\[
C^{kj}_n:=\begin{cases}
    (P^{kj}_n-t_{jk}^{-n+1}Q^{kj}_n)|_{U_{jk}}-(G^j_n-t_{jk}^{-n+1}G^k_n)|_{U_{jk}} & (U_k\cap C_{\rm sing}=\emptyset) \\
    (P^{kj}_n-t_{jk}^{-n+1}Q^{kj}_n)|_{U_{jk}}-(G^j_n-t_{jk}^{-n+1}(G^k_{+n}+G^k_{-n}))|_{U_{jk}} & (U_k\cap C_{\rm sing}\not=\emptyset)
  \end{cases}
\]
is a constant for each $j, k$. 
Thus it follows from Lemma \ref{lem:KS_const} that there exists a positive constant $K_1$ and a $0$-cochain $\{(U_j, C^j_n)\}\in\breve{C}^0(\{U_j\}, \mathbb{C}(N_{C/X}^{-n+1}))$ such that 
\begin{equation}\label{eq:def_C}
C^j_n-t_{jk}^{-n+1}C^k_n=C^{kj}_n,\ 
|C^j_n|\leq K_1\max_{j, k}|C^{kj}_n|
\end{equation}
holds for each $j, k$ (Here we used the assumption that $H^1(C, \mathbb{C}(N_{C/X}^{-n+1}))=0$). 
From the assumption that $N_{C/X}\in\mathcal{E}_0(C)$, it holds that $K_1$ can be taken as a constant which does not depend on $n$. 

Now let us consider
\[
F^j_n:=\begin{cases}
    C^j_n+G^j_n& (U_j\cap C_{\rm sing}=\emptyset) \\
    C^j_n+G^j_{+n}+G^j_{-n} & (U_j\cap C_{\rm sing}\not=\emptyset). 
  \end{cases}
\]
Then, it follows from the argument in the previous subsection that the system $\{F^j_\nu\}_{\nu=2}^n$ satisfies (Property)$_n$. 
As the inequality (\ref{eq:M_5}) and 
\begin{eqnarray}
|C^j_n|&\leq& K_1\max_{j, k}|C^{kj}_n| \nonumber \\
&\leq& K_1\|\{(U_{jk}, P^{kj}_n-t_{jk}^{-n+1}Q^{kj}_n)\}\| + 2K_1\cdot\left(\text{the coeff. of}\ X^n\ \text{in}\ \frac{M_5(A(X))^2}{1-6RA(X)}\right) \nonumber \\
&\leq& \text{the coeff. of}\ X^n\ \text{in}\ K_1(M_2+2M_5)\frac{(A(X))^2}{1-6RA(X)} \nonumber
\end{eqnarray}
holds, letting $K_1>1$, all we have to do is to show the inequality
\[
\text{the coeff. of}\ X^n\ \text{in}\ K_1(M_2+2M_5)\frac{(A(X))^2}{1-6RA(X)}\leq A_n, 
\]
which is clear by letting $M_0:=K_1(M_2+2M_5)$ and $R_0:=6R$. 
\qed

\subsubsection{Convergence of the functional equation (\ref{eq:funceq}) for the case $N_{C/X}\in\mathcal{E}_1(C)$} 

In this subsection, we prove that the formal power series $X+\sum_{n=2}^\infty (\sup_{V_j}|F^j_n|)\cdot X^n$ has a positive radius of convergence for a suitable choice of $\{F^j_n\}$ when $N_{C/X}\in\mathcal{E}_1(C)$. 

Let $M_1$ and $R$ be those in the previous subsection.  
Fixing a sufficiently large positive real number $K_2$, consider the constants 
$M_0:= K_2(M_2+M_5)$ and $R_0:=6R$, where $M_2$ and $M_5$ are the constants appeared in the previous section (Recall that $M_2$ and $M_5$ depend only on the choice of $M_1, R$, and the metric $h$ on $K_{\widetilde{C}}$).
Let $A(X)=X+\sum_{n=2}^\infty A_nX^n$ be the formal power series defined by 
\begin{equation}\label{eq:siegel}
\sum_{n=2}^\infty (M_5+d(\mathcal{O}_C, N_{C/X}^{n-1})^{-1}\cdot M_0)^{-1}\cdot A_nX^n
=\frac{(A(X))^2}{1-R_0A(X)}. 
\end{equation}

\begin{lemma}
$A(X)$ has a positive radius of convergence. 
\end{lemma}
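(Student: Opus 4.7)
The claim is that the formal power series $A(X)$ implicitly defined by (\ref{eq:siegel}) has a positive radius of convergence. My plan is to adapt the Siegel-type estimate used by Ueda in \cite[\S 4.2]{U83} for the nonsingular Diophantine case.

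The first step will be to translate the Diophantine assumption $N_{C/X}\in\mathcal{E}_1(C)$ into polynomial growth of the weights. Since $\log d(\mathcal{O}_C,N_{C/X}^{n-1})=O(\log n)$, there exist constants $s,K_3>0$ with $d(\mathcal{O}_C,N_{C/X}^{n-1})^{-1}\leq K_3 n^s$ for all $n\geq 2$, so the coefficient $c_n:=M_5+d(\mathcal{O}_C,N_{C/X}^{n-1})^{-1}M_0$ on the LHS of (\ref{eq:siegel}) satisfies $c_n\leq C_{10}\,n^s$ for some $C_{10}>0$. Then I will pass to a majorant: define $\widetilde{A}(X)=X+\sum_{n\geq 2}\widetilde{A}_n X^n$ by $\widetilde{A}_n:=C_{10}\,n^s\cdot [X^n]\left(\widetilde{A}^2/(1-R_0\widetilde{A})\right)$. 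Since all coefficients are nonnegative and the recursion is monotone in the $\widetilde{A}_j$ for $j<n$, an induction on $n$ yields $0\leq A_n\leq \widetilde{A}_n$, so it suffices to show that $\widetilde{A}$ has positive radius of convergence.

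For the convergence of $\widetilde{A}$, I will run a Picard iteration with carefully chosen shrinking radii. Fix small $\delta,\rho_\infty>0$ with $\rho_\infty+\delta<1/(2R_0)$, and set $\rho_k:=\rho_\infty+2^{-k}\delta$. Working in the weighted $\ell^1$-norms $\|f\|_\rho:=\sum_n|f_n|\rho^n$, consider the iterates $f^{(0)}(X):=X$ and $f^{(k+1)}:=X+\mathcal{L}^{-1}\!\left((f^{(k)})^2/(1-R_0 f^{(k)})\right)$, where $\mathcal{L}^{-1}$ is the diagonal operator multiplying the $n$-th Taylor coefficient by $C_{10}\,n^s$. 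The crucial inductive estimate to establish is $\epsilon_{k+1}:=\|f^{(k+1)}-X\|_{\rho_{k+1}}\leq C'\cdot 2^{s(k+1)}\epsilon_k^2$, and it will rest on two observations: first, the nonlinearity $Q(\phi):=\phi^2/(1-R_0\phi)$ vanishes to order two at $\phi=0$, so $\|Q(f^{(k)})\|_{\rho_k}=O(\epsilon_k^2)$ whenever $R_0\epsilon_k<1/2$; second, although $\mathcal{L}^{-1}$ is unbounded on any single $\ell^1$-norm, it is continuous from $\|\cdot\|_{\rho_k}$ to $\|\cdot\|_{\rho_{k+1}}$ with operator norm of order $(\rho_k-\rho_{k+1})^{-s}=O(2^{s(k+1)})$ (by extracting $\sup_n n^s(\rho_{k+1}/\rho_k)^n$). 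Starting from $\rho_0$ small enough that $\epsilon_0\leq\rho_0$ is tiny, the quadratic-with-polynomial-drift recursion $\epsilon_{k+1}\lesssim 2^{sk}\epsilon_k^2$ forces $\epsilon_k\to 0$ super-exponentially, and the $f^{(k)}$ converge coefficientwise to $\widetilde{A}$, which is then analytic on the disk $\{|X|<\rho_\infty\}$.

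The main obstacle is precisely this delicate balance between the polynomial blow-up $(\rho_k-\rho_{k+1})^{-s}$ produced by $\mathcal{L}^{-1}$ at each step and the quadratic gain $\epsilon_k^2$ produced by $Q$. A single uniform norm cannot accommodate the former, so the argument must take place in a scale of Banach spaces with shrinking radii, in the spirit of a Cauchy-Kowalevski or Nash-Moser scheme, and one has to verify that the geometric choice $\rho_k=\rho_\infty+2^{-k}\delta$ tames the accumulated polynomial losses $\prod_{k}2^{sk}$ against the super-exponential decay of the $\epsilon_k$; once this is done the positivity of the radius of convergence of $A$ follows from the majorization $A_n\leq \widetilde{A}_n$.
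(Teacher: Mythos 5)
Your argument has a genuine gap: it never uses (or even mentions) the subadditivity of the weights, which is the key structural input in the paper's proof. The paper reduces the lemma to two conditions and cites \cite{Sie} and \cite[Lemma~5]{U83}: $(1)$ polynomial control $-\log\varepsilon_n=O(\log n)$, and $(2)$ the subadditivity $\varepsilon_{n-m}^{-1}\leq\varepsilon_n^{-1}+\varepsilon_m^{-1}$. You only exploit $(1)$, and $(1)$ alone is not enough: in fact your very first step, passing to the majorant $\widetilde{A}$ defined by $\widetilde{A}_n=C_{10}n^s\cdot[X^n]\bigl(\widetilde{A}^2/(1-R_0\widetilde{A})\bigr)$, already destroys the lemma, because $\widetilde{A}$ has \emph{zero} radius of convergence. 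Indeed, keeping only the term $j=n-1,\ k=1$ in the convolution gives $\widetilde{A}_n\geq 2C_{10}n^s\,\widetilde{A}_{n-1}$, hence $\widetilde{A}_n\geq(2C_{10})^{n-1}(n!)^s$ and $\widetilde{A}_n^{1/n}\to\infty$. The reason the true $A(X)$ nevertheless converges is precisely that the actual weights $\varepsilon_{n-1}$ are not uniformly of size $n^s$: they are only occasionally that large, in a pattern controlled by the subadditivity $(2)$, and Siegel's combinatorial lemma uses that pattern to prevent the small divisors from compounding factorially. Replacing each $\varepsilon_{n-1}$ by the worst-case bound $C_{10}n^s$ throws away exactly the structure that makes the proof work.

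There is also a concrete error in the Picard iteration estimate. You set $\epsilon_k:=\|f^{(k)}-X\|_{\rho_k}$ and claim $\|Q(f^{(k)})\|_{\rho_k}=O(\epsilon_k^2)$ because $Q$ vanishes to order two at $0$. But $Q$ is quadratic in $f^{(k)}$, not in $f^{(k)}-X$: writing $f^{(k)}=X+h$, the leading contribution to $Q(f^{(k)})=(X+h)^2/(1-R_0(X+h))$ is $X^2$, so $\|Q(f^{(k)})\|_{\rho_k}=O\bigl((\rho_k+\epsilon_k)^2\bigr)$, which is bounded below by $\rho_\infty^2>0$ and does not shrink. Consequently $\epsilon_{k+1}\gtrsim 2^{sk}\rho_\infty^2\to\infty$, and the claimed quadratic recursion $\epsilon_{k+1}\lesssim 2^{sk}\epsilon_k^2$ never gets off the ground (already $\epsilon_0=0$ while $\epsilon_1\neq 0$). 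Switching to differences $\|f^{(k+1)}-f^{(k)}\|$ only yields a Lipschitz (linear) estimate, which the unbounded losses $2^{sk}$ from $\mathcal{L}^{-1}$ overwhelm. The scheme cannot be repaired without bringing in the missing subadditivity; you should instead verify hypotheses $(1)$ and $(2)$ for $\varepsilon_n=M_5+d(\mathcal{O}_C,N_{C/X}^n)^{-1}M_0$ (with $(2)$ coming from the triangle inequality for $d$, as in the paper) and invoke the Siegel--Ueda lemma directly.
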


\begin{proof}
Let us denote $\varepsilon_n:=M_5+d(\mathcal{O}_C, N_{C/X}^{n})^{-1}\cdot M_0$. 
It is sufficient to show the following two assertions: 
$(1)$ $-\log \varepsilon_n=O(\log n)$ as $n\to \infty$, 
$(2)$ $\varepsilon_{n-m}^{-1}\leq \varepsilon_n^{-1}+\varepsilon_m^{-1}$ for $m<n$ 
(\cite{Sie}, see also \cite[Lemma 5]{U83}). 
$(1)$ follows from the assumption $N_{C/X}\in\mathcal{E}_1(C)$. 
$(2)$ follows from the inequality 
\begin{eqnarray}
\varepsilon_{n-m}^{-1} 
&=& \frac{d(N_{C/X}^{n}, N_{C/X}^{m})}{M_0+M_5\cdot d(N_{C/X}^{n}, N_{C/X}^{m})}
\leq\frac{d(\mathcal{O}_C, N_{C/X}^{n})+d(\mathcal{O}_C, N_{C/X}^{m})}{M_0+M_5\cdot (d(\mathcal{O}_C, N_{C/X}^{n})+d(\mathcal{O}_C, N_{C/X}^{m}))}. 
\nonumber
\end{eqnarray}
\end{proof}

It is sufficient to show that Claim \ref{claim:F} is also true for $A(X)$ above under the condition $N_{C/X}\in\mathcal{E}_1(C)$.  
As in the previous subsection,  
we will construct $\{F^j_{n}\}$ such that (Property)$_n$ and the inequality (\ref{eq:F}) for $\nu=n$ holds by assuming the existence of $\{F^j_\nu\}_{\nu=2}^{n-1}$ satisfying (Property)$_\nu$ and the inequality (\ref{eq:F}) for $\nu< n$. 
Note that Lemma \ref{lem:PQ}, Lemma \ref{lem:PQ_derivative}, and the inequality (\ref{eq:M_5}) holds also in the present setting. 
Thus we can take $C^j_n$ as (\ref{eq:def_C}). 
However, under the present assumption, we can not take $K_1$ as a constant which depends on $n$ (At least, the existence of such $K_1$ does not followed directly from Lemma \ref{lem:KS_const} in the present setting). 
So here we use Lemma \ref{lem:KS_const} instead of Lemma \ref{lem:KS_const} to deduce that there exists a positive constant $K_2$ which does not depend on $n$ such that 
$|C^j_n|\leq d(C, N_{C/X}^{-n})^{-1}\cdot K_2\cdot\max_{j, k}|C^{jk}_n|$ holds. 
Since
\begin{eqnarray}
|C^j_n| &\leq& \text{the coeff. of}\ X^n\ \text{in}\ d(C, N_{C/X}^{-n})^{-1} K_2(M_2+2M_5)\frac{(A(X))^2}{1-6RA(X)} \nonumber \\
&\leq& \text{the coeff. of}\ X^n\ \text{in}\ (M_5+d(C, N_{C/X}^{-n})^{-1} M_0)\frac{(A(X))^2}{1-6RA(X)}, \nonumber 
\end{eqnarray}
it is sufficient to show the inequality 
\[
\text{the coeff. of}\ X^n\ \text{in}\ (M_5+d(C, N_{C/X}^{-n})^{-1} M_0)\frac{(A(X))^2}{1-6RA(X)} \leq A_n, 
\]
which is clearly followed by the equation (\ref{eq:siegel}). 
\qed

\section{Proof of Theorem \ref{thm:main_finite}}

In this section, we prove Theorem \ref{thm:main_finite}. 
Let $i\colon\widetilde{C}\to C$ be the normalization. 
Fix a neighborhood $V$ of $C$ and a sufficiently fine open covering $\{V_j\}$ of $V$. 
We may assume the open covering $\{U_j\}$ of $C$ defined by $U_j:=V_j\cap V_j$ is enough fine to satisfy the conditions in Remark \ref{rmk:tau_and_a}. 
Let $\{(V_j, w_j)\}$ be a system of order $n$, where $n$ is the type of the pair $(C, X)$. 
From the arguments in \cite[\S 3.4]{U83}, it is sufficient for proving Theorem \ref{thm:main_finite} to construct a real-valued $C^\infty$ function $\varphi_\lambda$ on $V\setminus C$ for each $\lambda>0$ which satisfies the following three properties: 
\begin{description}
  \setlength{\parskip}{0cm} 
  \setlength{\itemsep}{0cm} 
\item[(Property 1)] $\varphi_\lambda(p)=d(p, C)^{-\lambda n}+o(d(p, C)^{-\lambda n})$ as $p\to\infty$. 
\item[(Property 2)] $\varphi_\lambda$ is a strongly psh function on $V\setminus C$ if $\lambda>1$. 
\item[(Property 3)] The complex Hessian of $\varphi_\lambda$ has a positive and negative eigenvalue at each point in $V\setminus C$ if $\lambda<1$. 
\end{description}

Note that, by Proposition \ref{prop:hodge}, the natural map $H^1(C, \mathbb{C}(N_{C/X}^m))\to H^1(C, \mathcal{O}_C(N_{C/X}^m))\oplus H^1(C, \overline{\mathcal{O}}_C(N_{C/X}^m))$ is isomorphism for all integer $m$ (Here we used the assumption that $G(C)$ is a tree and that $N_{C/X}=\mathcal{O}_C$). 
Thus we can run the same argument as in \cite[\S 3.3]{U83} and then construct a function $s\colon V\setminus C\to\mathbb{R}$ such that 
\begin{eqnarray}
&&s|_{V_j}=|w|^{-2n}-w^{-n}g-\overline{w}^{-n}\overline{g}-w^{-n}\cdot\overline{\left(\sum_{a, b\geq 0,\ 1\leq a+b\leq n}\varphi_{ab}w^a\overline{w}^b\right)}\nonumber \\
&&\hskip30mm -\overline{w}^{-n}\cdot\left(\sum_{a, b\geq 0,\ 1\leq a+b\leq n}\varphi_{ab}w^a\overline{w}^b\right)
+|g|^2+|w|\cdot\alpha\nonumber
\end{eqnarray}
holds for some holomorphic function $g=g_j$ defined on $V_j$, 
pluriharmonic functions $\varphi_{ab}=\varphi_{j|ab}$ defined on $V_j$, and some $C^\infty$ function $\alpha=\alpha_j$ defined on $V_j$, where we are denoting by $w$ the function $w_j$. 
Moreover, it can also be assumed that the element of $H^1(C, \mathcal{O}_C(N_{C/X}^{-n}))$ defined by 
$\delta\{(U_j, \overline{g_j}|_{U_j})\}$ coincides with the class defined by $\{(U_{jk}, (w_j^{-n}-t_{jk}^{-n}w_k^{-n})|_{U_{jk}})\}$, 
where $t_{jk}\in U(1)$ is a transition function of $N_{C/X}$ on $U_{jk}$. 
From this fact and the assumption that $u_n(C, X)|_{{C}_\nu}\not=0\in H^1(C_\nu, \mathcal{O}_C|_{{C}_\nu})$ for each irreducible component $C_\nu$ of $C$, it follows that $dg_j|_{U_j}\not\equiv 0$ holds for all $U_j$. 
Thus it follows from the calculation in \cite[\S 3.3]{U83} that, after a small modification, $s^{\frac{\lambda}{2}}$ satisfies 
(Property 1), (Property 2), and (Property 3) on each $V_j$ such that $V_j\cap C_{\rm sing}=\emptyset$. 

Thus all we have to do is to modify the function $s^{\frac{\lambda}{2}}$ around each singular point $p\in C_{\rm sing}$ and show it satisfies (Property 1), (Property 2), and (Property 3). 
We assume $C_{\rm sing}=\{p\}\subset U_k$ for simplicity. 
In the rest of this section, we always consider on $V_k$ and omit the index $k$ ($w=w_k$, $g=g_k$ for example). 
Fix a coordinates system $(x, y)$ such that $w=x\cdot y$. 
Note that we may assume $g$ can be written in the form $g(x, y)=g_1(x)+g_2(y)$, where $g_1$ ($g_2$) is a holomorphic function depending only on the valuable $x$ ($y$, respectively). 
Moreover, we may assume that $\{dg_1=0\}\subset \{x=0\}$ and $\{dg_2=0\}\subset \{y=0\}$ hold by shrinking $U_k$ if necessary. 
By using a coordinates system $(w, z):=(x\cdot y, y)=(w, y)$ on $\{x\cdot y\not=0\}$, we can calculate the complex Hessian
\[
H_\lambda:=\left[
    \begin{array}{cc}
      (s^{\frac{\lambda}{2}})_{w\overline{w}} & (s^{\frac{\lambda}{2}})_{w\overline{z}} \\
      (s^{\frac{\lambda}{2}})_{z\overline{w}} & (s^{\frac{\lambda}{2}})_{z\overline{z}}
    \end{array}
  \right]
\]
of $s^{\frac{\lambda}{2}}$ as follows: 
\[
H_\lambda=\frac{\lambda}{2} |w|^{-(\lambda-2)n}\cdot
\left[
    \begin{array}{cc}
      \frac{\lambda}{2}n^2|w|^{-2n-2}\cdot (1+O(|w|)) & \left(\frac{\lambda}{2}-1\right)nw^{-1}\overline{w}^{-n}\overline{g_z}\cdot (1+O(|w|)) \\
     \left(\frac{\lambda}{2}-1\right)n\overline{w}^{-1}{w}^{-n}{g_z}\cdot (1+O(|w|)) & \frac{\lambda}{2}|g_z|^2
    \end{array}
  \right]. 
\]
Thus it holds that 
\[
{\rm det}\,H_\lambda
=\frac{(\lambda-1)\lambda^2n^2}{4} |g_z|^2|w|^{-2(\lambda n-n+1)}(1+O(|w|)). 
\]
Note that, as 
\[
g_z=(g_1+g_2)_z=(g_1)_x\cdot\left(\frac{w}{z}\right)_z+(g_2)_y=\frac{-x\cdot (g_1)_x+y\cdot (g_2)_y}{y}
\]
does not vanishes on each point in $\{x\not=0\}\cup\{y\not=0\}$, 
it is sufficient to modify $s^{\frac{\lambda}{2}}$ only around $p$. 
Letting $\rho(x, y)$ be a $C^\infty$ cut-off function such that $\rho|_{\{|x|^2+|y|^2<\delta\}}\equiv 1$ and ${\rm Supp}\,\rho\subset \{|x|^2+|y|^2<2\delta\}$ hold for some sufficiently small number $\delta>0$, 
consider the function $\varphi_\lambda$ defined by 
\[
\varphi_\lambda:= \begin{cases}
    s^{\frac{\lambda}{2}}+\varepsilon\rho(x, y)\cdot\left(|z|^2+\frac{|w|^2}{|z|^2}\right)\cdot|w|^{-(\lambda-2)n} & (\lambda>1) \\
    s^{\frac{\lambda}{2}}-\varepsilon\rho(x, y)\cdot\left(|z|^2+\frac{|w|^2}{|z|^2}\right)\cdot|w|^{-(\lambda-2)n} & (0<\lambda<1),
  \end{cases}
\]
where $\varepsilon$ is a sufficiently small positive real number. 
As the complex Hessian of the function $R_\lambda(w, z):=\left(|z|^2+\frac{|w|^2}{|z|^2}\right)\cdot|w|^{-(\lambda-2)n}$ can be calculated as 
\[
\left[
    \begin{array}{cc}
      (R_\lambda)_{w\overline{w}} & (R_\lambda)_{w\overline{z}} \\
      (R_\lambda)_{z\overline{w}} & (R_\lambda)_{z\overline{z}}
    \end{array}
  \right]
=|w|^{-(\lambda-2)n}
\left[
    \begin{array}{cc}
     O\left(\frac{1}{|w|^2}\right) & o\left(\frac{1}{w^2}\right) \\
      o\left(\frac{1}{\overline{w}^2}\right)& 1+\frac{|w|^2}{|z|^4}
    \end{array}
  \right], 
\]
we can conclude that, by shrinking $\varepsilon$ if necessary, the above function $\varphi_\lambda$ satisfies (Property 1), (Property 2), and (Property 3). 
\qed

\section{Proof of Theorem \ref{thm:main_2}}

In this section, we prove Theorem \ref{thm:main_2}. 
Let $C=C_1\cup C_2\cup\dots\cup C_N$ be the irreducible decomposition of $C$ and 
let $C_N\cap C_1=\{p_1\}$, $C_{\nu}\cap C_{\nu+1}=\{p_\nu\}$ ($\nu=2, 3, \dots, N$). 
In the following, we sometimes denote $p_N$ by $p_{0}$ and $C_N$ by $C_{0}$. 
Fix a neighborhood $V$ of $C$ and 
a sufficiently fine open covering $\{V_j\}$ of $V$. 
Denote by $\{U_j\}$ the induced open covering of $C$: $U_j:=V_j\cap V_j$. 
We may assume that $\{U_j\}$ satisfies the conditions in Remark \ref{rmk:tau_and_a}. 
Denote by $U_{k_\nu}$ the open set which includes $p_\nu$ as an element. 
Each $U_{k_\nu}$ has two irreducible components 
$U_{k_\nu}^{(\nu-1)}\subset C_{\nu-1}$ and $U_{k_\nu}^{(\nu)}\subset C_{\nu}$. 
Denoting by $U_j^{(\nu)}$ the open set $U_j$ by using $\nu$ which satisfies $U_j\in C_\nu$ for each $j\not\in\{k_1, k_2, \dots, k_N\}$, 
$\{U_j^{(\nu)}\mid j=k_{\nu}\ \text{or}\ j=k_{\nu+1}\ \text{or}\ U_j\subset C_\nu\}$ defines an open covering of $C_\nu$ for each $\nu=0, 1, \cdots, N-1$. 

From the arguments in \cite[\S 3.4]{U83}, it is sufficient for proving Theorem \ref{thm:main_2} to construct a real-valued $C^\infty$ function $\varphi_\lambda$ on $V\setminus C$ for each $\lambda>0$ which satisfies the following three properties: 
\begin{description}
  \setlength{\parskip}{0cm} 
  \setlength{\itemsep}{0cm} 
\item[(Property 1)] $\varphi_\lambda(p)=(\log d(p, C))^{2_\lambda}+o((\log d(p, C))^{2_\lambda})$ as $p\to\infty$. 
\item[(Property 2)] $\varphi_\lambda$ is a strongly psh function on $V\setminus C$ if $\lambda>1$. 
\item[(Property 3)] The complex Hessian of $\varphi_\lambda$ has a positive and negative eigenvalue at each point in $V\setminus C$ if $\lambda<1$. 
\end{description}

\begin{lemma}\label{lem:cycle}
For a suitable positive number $\alpha\in\mathbb{R}$ and $\{t_{jk}\}\subset \mathbb{C}^*$ such that $N_{C/X}=[\{(U_{jk}, t_{jk})\}]$, 
the following conditions hold: 
$|t_{jk}|=1$ holds if $j, k\not=k_1$, or if $j\not=k_1$ and $U_j\subset C_1$ hold. 
$|t_{jk_1}|=\alpha$ holds if $U_j\subset C_N$. 
\end{lemma}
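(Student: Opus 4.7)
The plan is to start from the explicit constant-valued cocycle representative of $N_{C/X}$ furnished by Remark \ref{rmk:tau_and_a}, and then to modify it by the coboundary of a suitable $0$-cochain of non-zero constants, in order to concentrate all the deviations from unitarity at a single node on a single side of that node.

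First I would invoke Lemma \ref{lem:topologically_trivial_lb} and Remark \ref{rmk:tau_and_a} to fix an initial representative of the form $t_{jk}=\tau_{\nu\mu}\cdot a_\mu/a_\nu$ with $\tau_{\nu\mu}\in U(1)$ and $a_\nu\in\mathbb{C}^*$, normalized so that $a_\nu=1$ whenever $\widetilde{U}_\nu=i^{-1}(U_j)$ for some non-nodal $U_j$. For each node $p_\sigma$, denote by $a_\sigma^{(\sigma)}$ and $a_\sigma^{(\sigma-1)}$ the two values of the $a$'s attached respectively to the sheets $i^{-1}(U_{k_\sigma}^{(\sigma)})$ and $i^{-1}(U_{k_\sigma}^{(\sigma-1)})$ (with the convention $C_0=C_N$). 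A direct case analysis of the formula $t_{jk}=\tau_{\nu\mu}\cdot a_\mu/a_\nu$ then shows that $|t_{jk}|=1$ whenever both $j$ and $k$ are non-nodal, while $|t_{jk_\sigma}|=|a_\sigma^{(\sigma)}|$ when $U_j\subset C_\sigma$ and $|t_{jk_\sigma}|=|a_\sigma^{(\sigma-1)}|$ when $U_j\subset C_{\sigma-1}$.

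Next I would apply a coboundary given by constants $h_j\in\mathbb{C}^*$ with $|h_j|=c_\nu$ on every non-nodal $U_j\subset C_\nu$ and $|h_{k_\sigma}|=d_\sigma$ on every nodal open set $U_{k_\sigma}$, so that the new cocycle $t'_{jk}=(h_j/h_k)t_{jk}$ is again valued in $\mathbb{C}^*$ and represents the same class as $\{t_{jk}\}$. Setting $c_1=1$, $c_\sigma=c_{\sigma-1}\cdot|a_\sigma^{(\sigma-1)}|/|a_\sigma^{(\sigma)}|$ for $\sigma=2,\ldots,N$, and $d_\sigma=|a_\sigma^{(\sigma)}|\,c_\sigma$ for $\sigma=1,\ldots,N$ forces all new absolute values to be $1$ except those of the pairs $(j,k_1)$ with $U_j\subset C_N$, for which a short computation yields
\[
|t'_{jk_1}|=\frac{c_N}{d_1}\cdot|a_1^{(0)}|=\prod_{\sigma=1}^N\frac{|a_\sigma^{(\sigma-1)}|}{|a_\sigma^{(\sigma)}|}=:\alpha>0.
\]
Relabeling $t'_{jk}$ as $t_{jk}$ would then complete the proof.

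The main obstacle will be bookkeeping: there are $2N$ scale parameters $c_\nu,d_\sigma$ and $2N$ absolute-value constraints (two per node), and one has to verify that walking once around the cycle graph $G(C)$ produces a cumulative multiplicative defect $\alpha$ which no choice of constant gauge can remove, so that exactly $2N-1$ of the constraints can be solved and the remaining one is forced to give $|t'_{jk_1}|=\alpha$ on the $C_N$-side of $p_1$. A minor subtlety to keep track of is the degenerate case $N=1$: there $C_0$ and $C_1$ coincide as subvarieties, but the two sheets at $p_1$ remain distinguished by the labels $(0)$ and $(1)$, so the construction and the resulting $\alpha$ make sense verbatim.
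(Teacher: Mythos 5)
Your proof is correct, but it takes a genuinely different route from the paper's. The paper's proof is a one-liner: it applies the argument of Lemma \ref{lem:topologically_trivial_lb} to the \emph{partial} normalization $i_1\colon\widetilde{C}_1\to C$ of $C$ at the single node $p_1$, and uses Lemma \ref{lem:tree} (the dual graph of $\widetilde{C}_1$ is a path, hence a tree, so $i_1^*N_{C/X}$ is flat) to conclude that the only place a non-unitary correction can appear is at $p_1$. Your argument instead starts from the \emph{full} normalization and the explicit description of Remark \ref{rmk:tau_and_a}, which records non-unitary factors $a_\sigma^{(\sigma)},a_\sigma^{(\sigma-1)}$ at every node $p_\sigma$, and then you produce a concrete coboundary by constants $\{h_j\}$ (with moduli $c_\nu$ on the components and $d_\sigma$ at the nodes) transporting all of these defects around the cycle to $p_1$; the recursion $c_\sigma=c_{\sigma-1}|a_\sigma^{(\sigma-1)}|/|a_\sigma^{(\sigma)}|$, $d_\sigma=c_\sigma|a_\sigma^{(\sigma)}|$ is exactly right and yields $\alpha=\prod_{\sigma=1}^N|a_\sigma^{(\sigma-1)}|/|a_\sigma^{(\sigma)}|$. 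Both arguments are sound. Yours is more elementary and self-contained — it avoids Lemma \ref{lem:tree} and makes the holonomy-type defect $\alpha$ completely explicit as a cyclic product — while the paper's is shorter because it reduces to an already-established tree case and cites the mechanism of Lemma \ref{lem:topologically_trivial_lb} verbatim. Your remark about $N=1$ is also well taken: in that degenerate case $C_1=C_N$ as subvarieties and the statement of the lemma is only unambiguous once one distinguishes the two local branches of $U_{k_1}$ rather than components, which is precisely how the constant $\alpha$ of \cite{U91} is defined.
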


\begin{proof}
It can be shown by the same argument as in the proof of Lemma \ref{lem:topologically_trivial_lb} by using 
the normalization $i_1\colon\widetilde{C}_1\to C$ of$C$ at $p_1$ instead of the normalization
$i\colon\widetilde{C}\to C$ 
(Note that, by Lemma \ref{lem:tree}, $i_1^*N_{C/X}$ is flat). 
\end{proof}

Fix $\alpha, \{t_{jk}\}$ as in Lemma \ref{lem:cycle}. 
Note that, by the assumption that $N_{C/X}$ is not flat, $\alpha\not=1$ holds. 
Fix also a system $\{(V_j, w_j)\}$ of order $4$. 

\begin{lemma}\label{lem:system_fxy}
There exists a nowhere vanishing holomorphic function $f_j$ on $V_j$ for each $j\not\in\{k_1, k_2, \dots, k_N\}$ and 
a coordinates system $(x_\nu, y_\nu)$ of $V_{k_\nu}$ for each $k_\nu (\nu=1, 2, \dots, N)$ 
such that the following conditions hold: \\
$(i)$ For each $j\not\in\{k_1, k_2, \dots, k_N\}$, $f_j$ depends only on the valuable $z_j$, where $z_j$ is a function such that $(w_j, z_j)$ is a coordinates system of $V_j$. \\
$(ii)$ For each $j, k$ such that $U_{j}, U_k\subset C_\nu$, there exists an element $s_{jk}\in U(1)$ such that $s_{jk}f_k=f_j+O(w_j)$ holds on $V_{jk}$. \\
$(iii)$ $\{x_\nu=0\}=U_{k_\nu}^{(\nu-1)}$ and $\{y_\nu=0\}=U_{k_\nu}^{(\nu)}$ hold on $V_{k_\nu} (\nu=1, 2, \dots, N)$. \\
$(iv)$ For each $j$ and $\nu=1, 2, \dots, N$ such that $U_{jk_\nu}\not=\emptyset$, there exists an element $s_{jk_\nu}\in U(1)$ such that $s_{jk_\nu}y_\nu^{-1}=f_j+O(w_j)$ holds if $U_j\subset C_{\nu-1}$, and that $s_{jk_\nu}x_\nu=f_j+O(w_j)$ holds if $U_j\subset C_{\nu}$. 
\end{lemma}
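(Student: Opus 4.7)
The plan is to build $\{f_j\}$ and $\{(x_\nu,y_\nu)\}$ from a single meromorphic function $g_\nu$ on each rational component $C_\nu\simeq\mathbb{P}^1$. Since $\mathcal{P}(\mathbb{P}^1)$ is trivial and every degree-zero divisor on $\mathbb{P}^1$ is principal, there is, uniquely up to a multiplicative constant, a meromorphic function $g_\nu$ on $C_\nu$ with a simple zero at the node of $C_\nu$ shared with $C_{\nu-1}$ and a simple pole at the node shared with $C_{\nu+1}$ (cyclically). The aim is then to let $f_j$ (for non-nodal $j$ with $U_j\subset C_\nu$) extend $g_\nu|_{U_j}$, and to arrange that the coordinates $(x_\nu,y_\nu)$ at $k_\nu$ restrict, respectively, to $g_\nu$ on the $C_\nu$-branch and to $g_{\nu-1}^{-1}$ on the $C_{\nu-1}$-branch.

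For $j\notin\{k_1,\dots,k_N\}$ with $U_j\subset C_\nu$, the restriction $g_\nu|_{U_j}$ is holomorphic and nowhere vanishing (by fineness of $\{U_j\}$, which keeps $U_j$ away from the two nodes of $C_\nu$), so I would set $f_j(w_j,z_j):=g_\nu|_{U_j}(z_j)$ on $V_j$, yielding $(i)$. For $(ii)$, if $U_j,U_k\subset C_\nu$ with $j,k$ both non-nodal, then $f_j$ and $f_k$ both restrict to $g_\nu$ on $U_{jk}$, so $s_{jk}=1\in U(1)$ suffices. At each node $k_\nu$ I would start from provisional coordinates $(\xi,\eta)$ on $V_{k_\nu}$ with $\{\xi=0\}=U_{k_\nu}^{(\nu-1)}$ and $\{\eta=0\}=U_{k_\nu}^{(\nu)}$, which exist because $U_{k_\nu}$ is modeled on $\{xy=0\}\subset\mathbb{C}^2$. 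On the $C_\nu$-branch $\{\eta=0\}$, $g_\nu$ has a simple zero at $p_\nu$, so $g_\nu|_{\{\eta=0\}}=\xi\cdot h_\nu(\xi)$ for some holomorphic nowhere vanishing $h_\nu$; extend $h_\nu$ to a nowhere vanishing holomorphic $u_\nu(\xi,\eta)$ on a possibly shrunken $V_{k_\nu}$ and set $x_\nu:=\xi\cdot u_\nu$, so that $\{x_\nu=0\}=U_{k_\nu}^{(\nu-1)}$ and $x_\nu|_{U_{k_\nu}^{(\nu)}}=g_\nu|_{U_{k_\nu}^{(\nu)}}$. Construct $y_\nu$ symmetrically from $g_{\nu-1}^{-1}$ on the $C_{\nu-1}$-branch. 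Then $(x_\nu,y_\nu)$ is a coordinate system satisfying $(iii)$.

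For $(iv)$: for $U_j\subset C_{\nu-1}$ with $U_{jk_\nu}\neq\emptyset$, on $U_{jk_\nu}=V_{jk_\nu}\cap C$ both $f_j$ and $y_\nu^{-1}$ equal $g_{\nu-1}$, so $f_j-y_\nu^{-1}$ vanishes on $U_{jk_\nu}$ and is therefore $O(w_j)$ on $V_{jk_\nu}$, giving $s_{jk_\nu}=1$; the case $U_j\subset C_\nu$ is identical with $x_\nu$ replacing $y_\nu^{-1}$. The step requiring the most care is the matching at the nodes, where one must align the rescalings of $g_\nu$, $x_\nu$, and $y_\nu$ so that all transition constants land in $U(1)$; the construction above does this with the trivial choice (all constants equal to $1$), leaving an $N$-dimensional torus of further rescalings free. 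Note that the parameter $\alpha$ of Lemma~\ref{lem:cycle}, which records the failure of $N_{C/X}$ to be flat, never appears explicitly here: it is encoded in the relationship between the new system $\{f_j,x_\nu,y_\nu\}$ and the original defining system $\{w_j\}$ (through the $t_{jk_\nu}$), and is exactly what will make this lemma useful in the subsequent construction of $\varphi_\lambda$.
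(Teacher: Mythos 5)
Your construction gives a correct proof \emph{only} under the extra assumption that each irreducible component $C_\nu$ of $C$ is rational, and that assumption is not part of the hypotheses of Lemma~\ref{lem:system_fxy} (nor of Theorem~\ref{thm:main_2}, where the statement is merely that the dual graph $G(C)$ is a cycle; the components themselves are allowed to have arbitrary genus, and when $G(C)$ has one vertex and one edge the single component need not be rational).

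The crux of your argument is the existence of a global meromorphic function $g_\nu$ on $C_\nu$ with divisor $p_\nu - p_{\nu+1}$. Such a $g_\nu$ exists precisely when the degree-zero divisor $p_\nu - p_{\nu+1}$ is principal, which is automatic on $\mathbb{P}^1$ but generally false on a curve of positive genus. What is always true for a smooth compact curve $C_\nu$ is that the degree-zero line bundle $\mathcal{O}_{C_\nu}(p_\nu - p_{\nu+1})$ is \emph{flat}, i.e.\ admits constant transition functions $s_{jk}\in U(1)$. The paper's proof uses exactly this: it takes the tautological meromorphic section of $\mathcal{O}_{C_\nu}(p_\nu - p_{\nu+1})$, whose local expressions $F_j^{(\nu)}$ satisfy $F_j^{(\nu)} = s_{jk} F_k^{(\nu)}$ with $s_{jk}\in U(1)$, and extends these to $V_j$ to obtain $f_j$, $x_\nu$, $y_\nu^{-1}$. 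This is why the statement only asks for matching up to constants in $U(1)$ rather than equality: the constants $s_{jk}$ record the nontriviality of that flat bundle, and your claim that one may take all of them equal to $1$ is exactly the step that fails off the rational case. (It also means you cannot hope to prove the slightly stronger ``$s_{jk}=1$'' version in general: if $\mathcal{O}_{C_\nu}(p_\nu - p_{\nu+1})$ is nontrivial, no global nowhere-vanishing collection of $f_j$ with trivial transition constants can exist.) Your gluing at the nodes via provisional coordinates $(\xi,\eta)$, the extension of the unit $h_\nu$ to $u_\nu$, and the $O(w_j)$-matching argument are all fine once the $U(1)$-twist is reinstated; the missing ingredient is to replace the single global $g_\nu$ by the flat system $\{F_j^{(\nu)}, s_{jk}\}$.
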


\begin{proof}
As $C_\nu$ is non-singular, 
$\mathcal{O}_{C_\nu}(p_\nu-p_{\nu+1})$ is flat for each $\nu=0, 1, \dots, N-1$. 
Thus there exists $s_{jk}\in U(1)$ for each $j, k$ and a holomorphic function $F_j^{(\nu)}$ on $U_j^{(\nu)}$ for each $j$ such that $F_j^{(\nu)}=s_{jk}F_k^{(\nu)}$ holds on $U_{jk}^{(\nu)}$. 
$f_j, x_\nu, y_\nu^{-1}$ can be constructed by extending each $F_j^{(\nu)}$ to $V_j$ in a suitable manner. 
\end{proof}

\begin{remark}\label{rmk:S_const}
In Lemma \ref{lem:system_fxy}, 
we may additionally assume that $S_\nu|_{U_{k_\nu}}$ is a constant function for each $\nu$, 
where $S_\nu:=(w_{k_\nu}/(x_\nu\cdot y_\nu))$. 
Here we will show this assertion by modifying $(x_\nu, y_\nu)$ as in Lemma \ref{lem:system_fxy} and constructing a new coordinates system $(\widetilde{x}_\nu, \widetilde{y}_\nu)$ with $(w_{k_\nu}/(\widetilde{x}_\nu\cdot \widetilde{y}_\nu))|_{U_{k_\nu}}\equiv q_\nu$, 
where $q_\nu:=S_\nu(p_\nu)$. 
Let $Q_\nu$ be a holomorphic function defined on $V_{k_\nu}$ which coincides with $S_\nu/q_\nu$ on $U_{k_\nu}^{(\nu)}$ and with the constant function $1$ on $U_{k_\nu}^{(\nu-1)}$, and consider $\widetilde{y}_\nu:=Q_\nu\cdot y_\nu$. 
$\widetilde{y}_\nu$ coincides with $y_\nu$ on ${U_{k_\nu}^{(\nu-1)}}$ and satisfies that $\widetilde{S}_\nu|_{U_{k_\nu}^{(\nu)}}\equiv q_\nu$, where 
$\widetilde{S}_\nu:=(w_{k_\nu}/(x_\nu\cdot \widetilde{y}_\nu))$. 
Let $\widetilde{Q}_\nu$ be a holomorphic function defined on $V_{k_\nu}$ which coincides with $\widetilde{S}_\nu/q_\nu$ on 
$U_{k_\nu}^{(\nu-1)}$ and with the constant function $1$ on $U_{k_\nu}^{(\nu)}$, 
and consider $\widetilde{x}_\nu:=\widetilde{Q}_\nu\cdot x_\nu$. 
$\widetilde{x}_\nu$ coincides with $x_\nu$ on ${U_{k_\nu}^{(\nu)}}$ and satisfies that 
$(w_{k_\nu}/(\widetilde{x}_\nu\cdot \widetilde{y}_\nu))|_{U_{k_\nu}}\equiv q_\nu$ holds, 
which shows the assertion. 
\end{remark}

\begin{lemma}\label{lem:q_const}
There exists a constant $q\in\mathbb{C}^*$ and functions $\{f_j\}, \{(x_\nu, y_\nu)\}$ as in 
Lemma \ref{lem:system_fxy} such that 
$S_\nu|_{U_{k_\nu}}\equiv q$ holds for each $\nu\in\{1, 2, \dots, N\}$, where $\widetilde{S_\nu}:=w_{k_\nu}/(x_\nu\cdot y_\nu)$. 
\end{lemma}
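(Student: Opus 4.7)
The starting point is Remark \ref{rmk:S_const}, which supplies a system $\{f_j\}, \{(x_\nu, y_\nu)\}$ satisfying all of Lemma \ref{lem:system_fxy} and in addition $S_\nu|_{U_{k_\nu}} \equiv q_\nu$ with $q_\nu := S_\nu(p_\nu) \in \mathbb{C}^*$. What remains is to further modify the system so that every $q_\nu$ becomes a common constant $q$.

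The plan is to exploit the following constant rescaling freedom: for each component $C_\nu$ pick a scalar $\lambda_\nu \in \mathbb{C}^*$ and replace every $f_j$ with $U_j \subset C_\nu$ by $\lambda_\nu f_j$. Since all $f_j$'s on $C_\nu$ are scaled by the same constant, the $U(1)$-valued transition constants $s_{jk}$ in condition (ii) are unchanged. Condition (iv) of Lemma \ref{lem:system_fxy} then forces the accompanying coordinate changes at each node: at $p_\nu$ the relation $s_{jk_\nu} x_\nu = f_j + O(w_j)$ (valid for $U_j \subset C_\nu$) requires $x_\nu \mapsto \lambda_\nu x_\nu$, while at $p_{\nu+1}$ the relation $s_{jk_{\nu+1}} y_{\nu+1}^{-1} = f_j + O(w_j)$ (again valid for $U_j \subset C_\nu$) requires $y_{\nu+1} \mapsto \lambda_\nu^{-1} y_{\nu+1}$, with cyclic convention $\lambda_0 = \lambda_N$, $y_{N+1} = y_1$. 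At each node $p_\nu$ the two rescalings of $x_\nu$ and $y_\nu$ come from scalings associated to different components ($C_\nu$ and $C_{\nu-1}$ respectively) and are therefore independent, so conditions (i)--(iv) of Lemma \ref{lem:system_fxy} are manifestly preserved.

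Since $w_{k_\nu}$ is untouched, under this rescaling $S_\nu = w_{k_\nu}/(x_\nu y_\nu)$ transforms by the factor $\lambda_{\nu-1}/\lambda_\nu$, and hence $q_\nu \mapsto (\lambda_{\nu-1}/\lambda_\nu) q_\nu$. The requirement that all new constants equal a common $q$ becomes $\lambda_\nu = (q_\nu/q)\,\lambda_{\nu-1}$ for each $\nu$, which iterated around the cycle yields the single consistency condition $q^N = q_1 q_2 \cdots q_N$. I would therefore choose $q$ to be any $N$-th root of $q_1 q_2 \cdots q_N$, pick $\lambda_0 \in \mathbb{C}^*$ arbitrarily, and define $\lambda_1, \ldots, \lambda_N$ by the above recursion; the chosen $q$ ensures cyclic closure, so the rescaled system fulfills the claim.

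The only place a genuine obstruction could appear is this cyclic consistency, but the equation $q^N = q_1 q_2 \cdots q_N$ is trivially soluble in $\mathbb{C}^*$, so no real difficulty arises; the rest is just bookkeeping to confirm that the rescaling preserves conditions (i)--(iv) of Lemma \ref{lem:system_fxy}.
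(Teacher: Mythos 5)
Your proposal is correct and follows essentially the same route as the paper's proof. The paper also starts from Remark~\ref{rmk:S_const}, introduces the recursively defined scalars $a_\nu := q^{-1} q_\nu a_{\nu-1}$ (which are exactly your $\lambda_\nu$ with the normalization $\lambda_0 = 1$), rescales $f_j \mapsto a_\nu f_j$, $x_\nu \mapsto a_\nu x_\nu$, $y_\nu \mapsto a_{\nu-1}^{-1} y_\nu$, and uses $q^N = q_1 \cdots q_N$ for cyclic closure; the only stylistic difference is that you make explicit the verification that conditions (i)--(iv) of Lemma~\ref{lem:system_fxy} are preserved under the forced coordinate rescalings, which the paper leaves implicit.
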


\begin{proof}
By Remark \ref{rmk:S_const}, we may assume that the function $S_\nu=w_{k_\nu}/(x_\nu\cdot y_\nu)$ coincides with the constant function $q_\nu$ on $U_\nu$. 
Fix a complex number $q$ such that $q^N=q_1\cdot q_2\cdot \cdots q_N$ 
and consider a sequence $\{a_1, a_2, \dots, a_N\}\subset\mathbb{C}^*$ which is defined by 
$a_1:=1$ and $a_\nu:=q^{-1}\cdot q_\nu\cdot a_{\nu-1}$. 
Let us denote by $\widetilde{f}_j$ the function $a_\nu f_j$ for each $j$ such that $U_j\subset C_\nu$, 
and by $(\widetilde{x}_\nu, \widetilde{y}_\nu)$ the new coordinates system of $V_{k_\nu}$ defined by 
$\widetilde{x}_\nu:=a_\nu\cdot x_\nu, \widetilde{y}_\nu:=a_{\nu-1}^{-1}\cdot y_\nu$. 
Then the assertion follows from the fact that 
$(w_{k_\nu}/(\widetilde{x}_\nu\cdot \widetilde{y}_\nu))|_{U_\nu}$ is the constant function $q_\nu\cdot a_{\nu-1}\cdot a_\nu^{-1}$. 
\end{proof}

Now we start constructing $\varphi_\lambda$ by using $\{f_j\}, \{(x_\nu, y_\nu)\}, q$ as in Lemma \ref{lem:q_const}. 
By replacing each $w_j$ with $q^{-1}\cdot w_j$, we assume that $q=1$. 
Define the function $\varphi_j$ on each $V_j\subset U_j$ as follows: 
\[
\varphi_j:=(\log |w_j|)^2+\frac{N-2\nu}{N}\cdot\log\alpha\cdot\log |w_j|+\frac{2}{N}\cdot\log\alpha\cdot\log |f_j|
\]
if $U_j\subset C_\nu$ holds for some $\nu\in\{1, 2, \dots, N\}$, and 
\[
\varphi_{k_\nu}:=(\log |w_{k_\nu}|)^2+\frac{N-2(\nu-1)}{N}\cdot\log\alpha\cdot\log |w_{k_\nu}|-\frac{2}{N}\cdot\log\alpha\cdot\log |y_\nu|
\]
for each $\nu=1, 2, \dots, N$. 
Let $\{\rho_j\}$ be a system of $C^\infty$ functions $\rho_j\colon V_j\to[0, 1]\subset\mathbb{R}$ such that 
${\rm Supp}\,\rho_j\subset V_j$ holds for each $j$ and $(\sum_j\rho_j)|_{V_0}\equiv 1$ holds for some neighborhood $V_0$ of $C$ in $V$. 
Since the calculation as in \cite[p. 687]{U91} shows that 
\[
\eta_{jk}(p):=\begin{cases}
    \varphi_k(p)-\varphi_j(p) & (p\in V_{jk}\setminus U_{jk}) \\
    0 & (p\in U_{jk})
  \end{cases}
\]
is a $C^2$ class function defined on $V_{jk}$ for each $j, k$ such that $U_{jk}\not=\emptyset$. 
Let us denote by $\varphi$ the function on $V\setminus C$ such that $\varphi|_{V_j}=\varphi_j+\sum_k\rho_k\cdot \eta_{jk}$ for each $j$. 

We will construct a function $\varphi_\lambda$ with (Property 1), (Property 2), and (Property 3) by modifying the function $\varphi^\lambda$. 
First we obtain from the same calculation as in \cite[p. 689]{U91} that
\[
\frac{\varphi^{-\lambda+2}}{\lambda}\left[
    \begin{array}{rr}
      (\varphi^\lambda)_{w\overline{w}} & (\varphi^\lambda)_{w\overline{z}} \\
      (\varphi^\lambda)_{z\overline{w}} & (\varphi^\lambda)_{z\overline{z}}
    \end{array}
  \right]
= \left[
    \begin{array}{cc}
      \left(\lambda-\frac{1}{2}\right)\frac{(\log|w|)^2}{|w|^2}(1+o(1)) & \left(\lambda-1\right)\frac{-B\log|w|}{w\overline{z}}(1+o(1)) \\
      \left(\lambda-1\right)\frac{-B\log|w|}{|\overline{w}z|^2}(1+o(1)) & \left(\lambda-1\right)\frac{B^2}{|z|^2}(1+o(1))
    \end{array}
  \right]
\]
holds on each $V_{k_\nu}\setminus U_{k_\nu}$ for each $\nu=1, 2, \dots, N$, where $(w, z)=(w_{k_\nu}, y_\nu)$ and $B=\frac{\log\alpha}{N}$. Note the determinant of the above matrix is
\[
\frac{\lambda-1}{2}\frac{B^2(\log|w|^2)}{|wz|^2}
=\frac{\lambda-1}{2}\cdot\frac{(\log\alpha)^2}{N^2}\cdot\frac{(\log|w|^2)}{|wz|^2}. 
\]
For each $j$ such that $U_j\subset C_\nu$, we obtain from the same calculation as in \cite[p. 688]{U91} that the above matrix with $(w, z):=(w_j, z_j)$ is
\[
\left[
    \begin{array}{cc}
      \left(\lambda-\frac{1}{2}\right)\frac{(\log|w|)^2}{|w|^2}(1+o(1)) & \left(\lambda-1\right)\frac{\log\alpha}{N}\cdot\frac{\log|w|}{w}\overline{\left(\frac{(f_j)_z}{f_j}\right)}(1+o(1)) \\
      \left(\lambda-1\right)\frac{\log\alpha}{N}\cdot\overline{\left(\frac{\log|w|}{w}\right)}\frac{(f_j)_z}{f_j}(1+o(1)) & \left(\lambda-1\right)\frac{(\log\alpha)^2}{N^2}\cdot\left|\frac{(f_j)_z}{f_j}\right|^2
    \end{array}
  \right], 
\]
whose determinant is 
\[
\frac{\lambda-1}{2}\frac{(\log\alpha)^2}{N^2}\cdot\left|\frac{(f_j)_z}{f_j}\right|^2\frac{(\log|w|)^2}{|w|^2}(1+o(1)). 
\]
Thus it turns out that the function $\varphi_\lambda:=\varphi^\lambda$ satisfies (Property 1), (Property 2), and (Property 3) if each function $(f_j)_{z_j}$ has no zero. 

In the rest of this section, we construct a function $\varphi_\lambda$ with (Property 1), (Property 2), and (Property 3) when there uniquely exists a open set $U_j$ ($j\not\in\{k_1, k_2, \dots, k_N\}$) such that $(f_j)_{z_j}$ vanishes only on $\{z_j=0\}$, for simplicity. 
Letting $\chi(z_j)$ be a $C^\infty$ cut-off function such that $\chi|_{\{|z_j|<\delta\}}\equiv 1$ and ${\rm Supp}\,\chi\subset \{|z_j|^2<2\delta\}$ hold for some sufficiently small number $\delta>0$, 
consider the function $\varphi_\lambda$ defined by 
\[
\varphi_\lambda:= \begin{cases}
    \varphi(w_j, z_j)^\lambda+\varepsilon\lambda\cdot\chi(z_j)\cdot(-\log|w_j|)^{2(\lambda-2)}\cdot|z_j|^2 & (\lambda>1) \\
    \varphi(w_j, z_j)^\lambda-\varepsilon\lambda\cdot\chi(z_j)\cdot(-\log|w_j|)^{2(\lambda-2)}\cdot|z_j|^2 & (0<\lambda<1), 
  \end{cases}
\]
where $\varepsilon>0$ is a sufficiently small positive real number. 
As 
\[
{(-\log|w|)^{-\lambda+2}}\left[
    \begin{array}{rr}
      (R_\lambda)_{w\overline{w}} & (R_\lambda)_{w\overline{z}} \\
      (R_\lambda)_{z\overline{w}} & (R_\lambda)_{z\overline{z}}
    \end{array}
  \right]
=  \left[
    \begin{array}{cc}
      O\left(\frac{1}{|w|^2(-\log|w|)^2}\right) & O\left(\frac{1}{w(-\log|w|)}\right) \\
     O\left(\frac{1}{\overline{w}(-\log|w|)}\right) & \left(\chi(z)\cdot|z|^2\right)_{z\overline{z}}
    \end{array}
  \right]
\]
holds for $R_\lambda:=(-\log|w_j|)^{2(\lambda-2)}\cdot|z_j|^2$ and $(w, z):=(w_j, z_j)$, 
we can conclude that, by shrinking $\varepsilon$ if necessary, the above function $\varphi_\lambda$ satisfies (Property 1), (Property 2), and (Property 3). 
\qed

\section{Proof of Theorem \ref{thm:maincor} and application to the blow-up of the projective plane at nine points}

\subsection{Minimal singular metrics of the anti-canonical bundle of the blow-up of the projective plane at nine points}

In this section, we will prove Theorem \ref{thm:maincor}. 
We also study singular Hermitian metrics with semi-positive curvature on $K_X^{-1}$ of the blow-up $X$ of $\mathbb{P}^2$ at nine points in arbitrary position by applying Theorem \ref{thm:main} and Theorem \ref{thm:main_2}, 
and prove the following: 

\begin{theorem}\label{thm:9ptbup}
Let $p_1, p_2, \dots, p_9\in\mathbb{P}^2$ be $9$ points different from each other, $\pi\colon X\to\mathbb{P}^2$ be the blow-up at $\{p_j\}_{j=1}^9$. Then one of the following five assertions holds: \\
$(i)$ $K_X^{-1}$ is semi-ample (i.e. $K_X^{-n}$ is generated by global sections for some integer $n$). \\
$(ii)$ $K_X^{-1}$ is not semi-ample, however it is semi-positive (i.e. $K_X^{-1}$ admits a $C^\infty$ Hermitian metric with semi-positive curvature). \\
$(iii)$ $K_X^{-1}$ is nef and there exists a section $f\in H^0(X, K_X^{-1})\setminus\{0\}$ such that the singular Hermitian metric $|f|^{-2}$ is a minimal singular metric (i.e.\hskip1mma metric with the mildest singularities among singular Hermitian metrics of $K_{X}^{-1}$ whose local weights are psh). In this case, $K_X^{-1}$ is not semi-positive. \\
$(iv)$ $K_X^{-1}$ is nef, and there exists a compact curve $C\subset X$ with nodes such that $N_{C/X}\in\mathcal{P}_0(C)\setminus(\mathcal{E}_0(C)\cup\mathcal{E}_1(C))$ and $K_X^{-1}=\mathcal{O}_X(C)$. \\
$(v)$ $K_X^{-1}$ is not nef, and the nef part of the Zariski decomposition of $K_X^{-1}$ is semi-ample. 
\end{theorem}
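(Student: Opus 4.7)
The plan is to separate cases by whether $K_X^{-1}$ is nef and, within the nef case, by the type of the normal bundle $N_{C/X}$ of an anti-canonical divisor $C$. Since $h^0(X, K_X^{-1}) \geq 10 - 9 = 1$, a nonzero section $f \in H^0(X, K_X^{-1})$ always exists; set $C := (f)$. If $h^0(X, K_X^{-1}) \geq 2$, the pencil of cubics through $\{p_j\}$ resolves to an elliptic fibration $X \to \mathbb{P}^1$ with $K_X^{-1}$ the pull-back of $\mathcal{O}_{\mathbb{P}^1}(1)$, so $K_X^{-1}$ is semi-ample (case (i)). Hence I may assume $h^0(X, K_X^{-1}) = 1$, so $C$ is unique and $K_X^{-1} = \mathcal{O}_X(C)$. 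When $K_X^{-1}$ is nef, $C^2 = K_X^2 = 0$, and by adjunction $K_C = (K_X + C)|_C = 0$; hence $C$ is a reduced compact curve of arithmetic genus one with $N_{C/X} \in \mathcal{P}(C)$. Generically $C$ is either a smooth elliptic curve or a cycle of rational curves with only nodes, so Theorem \ref{thm:maincor} becomes applicable.

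Inside the nef case I would split on the type of $N_{C/X}$. If $N_{C/X} \in \mathcal{E}_0(C)$ (torsion), Theorem \ref{thm:main} flattens a neighbourhood of $C$; combined with the rationality of $X$ and a standard cohomological lifting of sections of $N_{C/X}^n$ through $0 \to K_X^{-n+1} \to K_X^{-n} \to N_{C/X}^n \to 0$, one deduces global semi-ampleness of $K_X^{-1}$, giving case (i). If $N_{C/X} \in \mathcal{E}_1(C) \setminus \mathcal{E}_0(C)$, Theorem \ref{thm:maincor}(i) provides a $C^\infty$ Hermitian metric with semi-positive curvature on $K_X^{-1}$, while non-torsion of $N_{C/X}$ obstructs semi-ampleness (a section of $K_X^{-n}$ non-vanishing on $C$ would force $N_{C/X}^n \cong \mathcal{O}_C$), giving case (ii). If $N_{C/X} \in \mathcal{P}_0(C) \setminus (\mathcal{E}_0(C) \cup \mathcal{E}_1(C))$, we are in case (iv) by definition. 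If $N_{C/X} \notin \mathcal{P}_0(C)$, Theorem \ref{thm:maincor}(ii) shows that $|f|^{-2}$ is a minimal singular metric and no $C^\infty$ semi-positive metric exists, giving case (iii).

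For the remaining case where $K_X^{-1}$ is not nef, I would invoke the Zariski decomposition $K_X^{-1} = P + N$. The components $E$ of the support of $N$ satisfy $K_X \cdot E > 0$; combined with adjunction $K_X \cdot E + E^2 = 2g_E - 2$ on the rational surface $X$, one finds they are smooth rational curves with $E^2 \leq -3$. Contracting these components successively produces a rational surface $X'$ on which $P$ descends to a nef class, to which the analysis of cases (i)--(iv) applies; semi-ampleness on $X'$ pulls back to semi-ampleness of $P$ on $X$, giving case (v).

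The main obstacle I expect is twofold. First, the torsion subcase requires upgrading the local Ueda flatness near $C$ supplied by Theorem \ref{thm:main} to a global semi-ample line bundle on $X$; the key input is a vanishing argument of the form $H^1(X, K_X^{-n+1}) = 0$ that allows lifting sections from $C$ to $X$, and this must be checked using the specific geometry of the nine-point blow-up. Second, for configurations where $C$ is not reduced with only nodes (cuspidal cubics, triple lines, concurrent-line arrangements, etc.), Theorem \ref{thm:maincor} does not apply directly and one must handle these degenerate cases separately, typically reducing them to case (i) by explicit computation of sections of $K_X^{-n}$.
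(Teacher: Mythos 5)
Your high-level architecture matches the paper's: split on whether $K_X^{-1}$ is nef; in the nef case, take an anticanonical curve $C$ of arithmetic genus one and split by the flatness/torsion type of $N_{C/X}$; in the non-nef case, show the positive part of the Zariski decomposition is semi-ample. Your case split on $N_{C/X}$ correctly invokes Theorem \ref{thm:maincor} and your anticipation of the cohomological lifting argument for the torsion case is essentially Lemma \ref{lem:torsion_9ptbup}. However, there are two genuine gaps.

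The principal gap is your treatment of the non-nodal configurations (cuspidal cubic, conic plus tangent line, three concurrent lines --- cases $(7.10.3)$, $(7.10.5)$, $(7.10.7)$ in Proposition \ref{prop:9ptbup_all}). You note that Theorem \ref{thm:maincor} does not apply and propose to dispose of these cases by ``explicit computation of sections of $K_X^{-n}$,'' reducing them to case $(i)$. This does not work: these configurations are not forced into case $(i)$ and can just as well land in case $(iii)$. The paper handles them by resolving the worse-than-nodal singularity to a simple normal crossing divisor $D=aC+\sum a_\nu E_\nu$ with $(D^2)=0$, passing to the cyclic cover $p\colon\widetilde W\to W$ branched over the $E_\nu$ (Lemma \ref{lem:existence_Wtild}), and applying the classical Ueda theorems on $\widetilde W$ to the reduced curve $\widetilde D$; the deck-transformation averaging then pushes the flatness or the psh exhaustion back down to $W$ (Propositions \ref{thm:SNCuedath_1} and \ref{thm:SNCuedath_2}). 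This cyclic cover mechanism is a substantive ingredient of the proof that your proposal does not supply and that a direct section count cannot replace, since the dichotomy ``infinite type vs.\ finite type'' for $(\widetilde D,\widetilde W)$ is exactly what decides between $(i)$ and $(iii)$.

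The second, smaller gap is in the non-nef case. You propose contracting the components of the negative part $N$ and descending to a surface $X'$. Since those components have self-intersection $\leq -3$, the contraction produces a singular surface and the descent argument needs care. The paper instead argues directly: $(K_X^{-1})^2=0$ gives $(K_X^{-1}\cdot P)=P^2=-(N^2)>0$, and then \cite[Lemma 3.1]{LT} yields semi-ampleness of $P$ on $X$ itself, with no contraction required. You should replace the contraction step by this direct computation.
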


For the precise definition of minimal singular metrics, see \cite[Definition 1.4]{DPS00}. 

\begin{remark}
It also follows from the proof of Theorem \ref{thm:9ptbup} blow that none of the five conditions $(i), (ii), \dots, (v)$ in Theorem \ref{thm:9ptbup} can be removed, 
since there exists configurations for which each condition is realized. 
\end{remark}

\begin{remark}\label{rmk:msm}
Note that, except the case of Theorem \ref{thm:9ptbup} $(iv)$, we can determine the concrete expression of a minimal singular metric on $K_X^{-1}$. 
For the case $(i)$ and $(ii)$, $K_X^{-1}$ is semi-positive and thus a minimal singular metric can be taken as a $C^\infty$ one and thus it has no singularity. 
For the case $(v)$, the metric
$h_{\rm min}=h_P\otimes\prod_j|g_j|^{-2a_j}$ is a minimal singular metric of $K_X^{-1}$, 
where $h_P$ is a $C^\infty$ Hermitian metric on the nef part $P$ of Zariski decomposition of $K_X^{-1}$ with semi-positive curvature, 
$N=\sum_ja_jD_j$ is the negative part, 
and $g_j\in H^0(X, \mathcal{O}_X(D_j))$ is the canonical section for each $j$ 
(The minimal singularity of the above metric directly follows from the fact that, for all closed (semi-)positive current $T$ in the class $c_1(K_X^{-1})$, the Lelong number $\nu(T, D_j)$ of 
$T$ along $D_j$ is greater than or equal to $a_j$ for each $j$ (see \cite[p. 54]{Bo}). 
\end{remark}

\subsection{Preliminary for the proof of Theorem \ref{thm:maincor} and Theorem \ref{thm:9ptbup}}

In this subsection, we give some lemmata and propositions needed in the proof of Theorem \ref{thm:maincor} and Theorem \ref{thm:9ptbup}. 
First we show the following: 

\begin{lemma}\label{lem:torsion_9ptbup}
Let $X$ be a non-singular rational surface and $C\subset X$ be a reduced compact curve with only nodes such that $N_{C/X}\in\mathcal{E}_0(C)$. 
Assume one of the following conditions: \\
$(1)$ $C$ is a non-singular elliptic curve. \\
$(2)$ Each component of the normalization of $C$ is a rational curve, and 
the Euler number of the dual graph $G(C)$ is equal to $0$. \\
Then $\mathcal{O}_X(C)$ is semi-ample. 
\end{lemma}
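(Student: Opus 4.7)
Let $m \geq 1$ denote the order of $N_{C/X}$ in $\mathcal{E}_0(C)$, so $N_{C/X}^m \cong \mathcal{O}_C$. My strategy is to produce a global section $\sigma \in H^0(X, \mathcal{O}_X(mC))$ whose restriction to $C$ is a nonzero constant (i.e.\ a trivialization of $\mathcal{O}_X(mC)|_C = \mathcal{O}_C$). Together with the canonical section $s^m$ of $\mathcal{O}_X(mC)$, vanishing exactly on $mC$, such a $\sigma$ spans a pencil in $|mC|$ whose base locus is disjoint from $C$, since each member $t_0\sigma + t_1 s^m$ with $t_0 \neq 0$ is nowhere vanishing on $C$. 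In both cases of the lemma $p_a(C) = 1$, so adjunction gives $K_X \cdot C = 0$, and Riemann--Roch yields $\chi(X, \mathcal{O}_X(nC)) = 1$ for all $n$; combined with the rationality of $X$, a standard linear-system argument on rational surfaces then clears any residual base points in $X \setminus C$ after passing to a further multiple, yielding the desired semi-ampleness of $\mathcal{O}_X(C)$.

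To construct $\sigma$, I would first secure a local trivialization $\tau_V$ of $\mathcal{O}_V(mC)$ on an analytic neighborhood $V$ of $C$. In case (1), this is furnished directly by \cite[Theorem~3]{U83}, applied to the smooth elliptic $C$ with torsion normal bundle. In case (2), I would apply Theorem \ref{thm:main} to the pair $(C, X)$, verifying its hypotheses as follows: the condition $i^* N_{C/X} \in \mathcal{E}_0(\widetilde{C})$ holds automatically because each component of $\widetilde{C}$ is $\mathbb{P}^1$, on which every topologically trivial holomorphic line bundle is trivial. For $m \nmid n$, the vanishing $H^1(C, \mathbb{C}(N_{C/X}^{-n})) = 0$ follows from Proposition \ref{prop:hodge}(2) combined with Serre duality on the cycle $C$ (whose dualizing sheaf is trivial) together with the observation that a nontrivial flat line bundle on $C$ has no nonzero global sections. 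The remaining case $m \mid n$ and the infinite-type condition for $(C, X)$ should be handled by the termination of the inductive construction in \S4: once a formal trivialization of $\mathcal{O}_V(mC)$ has been achieved (the formal neighborhood statement), all higher correction terms $F^j_n$ can be taken to vanish, so the convergence issues driven by the $H^1$-hypothesis become vacuous and infinite type follows automatically from the torsion assumption. The upshot is a neighborhood $V$ with $\mathcal{O}_V(mC) \cong \mathcal{O}_V$, hence the nowhere-vanishing $\tau_V$ restricting to a nonzero constant on $C$.

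The final step---extending $\tau_V$ to a global section $\sigma \in H^0(X, \mathcal{O}_X(mC))$---is the main technical obstacle, and is cohomological in nature. Via the long exact sequence attached to
\[
0 \to \mathcal{O}_X((m-1)C) \to \mathcal{O}_X(mC) \to N_{C/X}^m = \mathcal{O}_C \to 0,
\]
it suffices to show that the boundary map $\delta \colon H^0(C, \mathcal{O}_C) \to H^1(X, \mathcal{O}_X((m-1)C))$ vanishes on the class $\tau_V|_C$. My approach is Mayer--Vietoris on the cover $X = V \cup (X \setminus C)$: the image $\delta(\tau_V|_C)$ is represented by the cocycle $\tau_V/s \in H^0(V \setminus C, \mathcal{O}_X((m-1)C))$, and I would kill it by an inductive descent through the filtration $\mathcal{O}_X \subset \mathcal{O}_X(C) \subset \cdots \subset \mathcal{O}_X((m-1)C)$, ultimately reducing the obstruction to a class in $H^1(X, \mathcal{O}_X) = 0$ (which uses the rationality of $X$). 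Here the neighborhood flatness produced above is essential, since it gives a holomorphic trivialization on $V$ whose cocycle descends cleanly through the filtration; without it the argument would fail at the very first step. Once $\sigma$ is obtained, the outline in the first paragraph concludes the semi-ampleness of $\mathcal{O}_X(C)$.
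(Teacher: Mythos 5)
Your proposal takes a genuinely different route from the paper, and the route has a real gap. The paper's proof is purely cohomological and makes no use of Ueda theory at all: letting $n$ be the order of $N_{C/X}$, it notes that $H^1(C, N_{C/X}^r)=0$ for $1\le r\le n-1$ (by the degree count as in the proof of Proposition \ref{prop:hodge}) and then runs the restriction sequences $0\to\mathcal{O}_X((r-1)C)\to\mathcal{O}_X(rC)\to N_{C/X}^r\to 0$ to get $H^1(X,\mathcal{O}_X((n-1)C))\cong H^1(X,\mathcal{O}_X)=0$ (rationality). The surjection $H^0(X,\mathcal{O}_X(nC))\twoheadrightarrow H^0(C,\mathcal{O}_C)$ then furnishes the section $\sigma$ immediately, and the pencil $\langle\sigma, s^n\rangle$ is already basepoint-free since $\mathrm{div}(\sigma)\cap nC=\emptyset$ — there are no ``residual base points'' to clear, so the Riemann--Roch digression at the start of your argument is a red herring.

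The detour through Ueda theory is not only unnecessary but introduces a genuine hole. To invoke Theorem~\ref{thm:main} (or \cite[Theorem~3]{U83}) you must verify both $H^1(C,\mathbb{C}(N_{C/X}^{-\nu}))=0$ for all $\nu>0$ and that $(C,X)$ is of infinite type. For torsion $N_{C/X}$ of order $m>1$, the computation in Proposition~\ref{prop:hodge} gives $\dim H^1(C,\mathbb{C}(N_{C/X}^{-\nu}))=\dim H^0(C,\mathbb{C}(N_{C/X}^{-\nu}))$, which is $1$ whenever $m\mid\nu$, so the hypothesis of Theorem~\ref{thm:main} fails at those $\nu$; likewise $u_{\nu}(C,X)$ for $m\mid\nu$ lives in the nonzero group $H^1(C,\mathcal{O}_C)$, so infinite type does \emph{not} ``follow automatically from the torsion assumption.'' (It is true in the present setting, but the proof of that fact is essentially the cohomological argument of the paper — cf.\ \cite[Lemma~6.2]{N} for case~(1) — so you cannot get it for free as a way to bypass that argument.) Finally, your claim that the neighborhood trivialization $\tau_V$ is ``essential'' to killing the obstruction in $H^1(X,\mathcal{O}_X((m-1)C))$, and that ``without it the argument would fail at the very first step,'' is wrong: that cohomology group is simply zero, as the paper's inductive computation shows, and no a priori choice of local trivialization is needed to lift a constant section of $\mathcal{O}_C$.
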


\begin{proof}
Let $n\geq 1$ be the minimum integer such that $N_{C/X}^n=\mathcal{O}_C$. 
By considering the exact sequence 
\[
H^0(X, \mathcal{O}_X(nC))\to H^0(C, N_{C/X}^n)\to H^1(X, \mathcal{O}_X((n-1)C))
\]
induced from $0\to\mathcal{O}_X((n-1)C)\to\mathcal{O}_X(nC)\to\mathcal{O}_X(nC)\otimes\mathcal{O}_X/\mathcal{O}_X(-C)\to 0$, it is sufficient to show that $H^1(X, \mathcal{O}_X((n-1)C))=0$ holds. 
Note that $H^1(C, N_{C/X}^r)=0$ holds for each $0\leq r\leq n-1$ 
(It is clear for the case $(1)$. For the case $(2)$, we can prove it by using the same calculation as in the proof of Proposition \ref{prop:hodge}). 
Thus we obtain $H^1(X, \mathcal{O}_X((n-1)C))\cong H^1(X, \mathcal{O}_X)$ from the argument in \cite[p. 38]{N}, which shows the lemma. 
\end{proof}

In the proof of Theorem \ref{thm:maincor} and Theorem \ref{thm:9ptbup}, we use Theorem \ref{thm:main} in the following form: 

\begin{corollary}\label{cor:main_P1}
Let $X$ be a non-singular surface, $C\subset X$ be a reduced compact curve with only nodes such that
each component of the normalization of $C$ is a rational curve. 
Assume that the dual graph $G(C)$ is a cycle graph and that $N_{C/X}\in\mathcal{E}_1(C)$ holds. 
Then $\mathcal{O}_V(C)$ is a flat line bundle for some neighborhood $V$ of $C$ in $X$. 
\end{corollary}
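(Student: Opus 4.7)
The plan is to apply Theorem \ref{thm:main} by verifying each of its hypotheses in the present geometric setting.

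First I would verify $i^{*}N_{C/X}\in\mathcal{E}_{0}(\widetilde{C})$. Since $\mathcal{E}_{1}(C)\subseteq\mathcal{P}_{0}(C)\subseteq\mathcal{P}(C)$, the bundle $N_{C/X}$ is topologically trivial; its pullback to $\widetilde{C}$ is therefore topologically trivial on a disjoint union of copies of $\mathbb{P}^{1}$, hence holomorphically trivial. Thus $i^{*}N_{C/X}=\mathcal{O}_{\widetilde{C}}$, which lies in $\mathcal{E}_{0}(\widetilde{C})$.

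Next I would establish the cohomological vanishing $H^{1}(C,\mathbb{C}(N_{C/X}^{-n}))=0$ for every $n\in\mathbb{Z}_{>0}$. A cycle dual graph has Euler number zero, and $i^{*}N_{C/X}^{-n}=\mathcal{O}_{\widetilde{C}}$ by the previous step, so case $(2)$ of Proposition \ref{prop:hodge} applies whenever $N_{C/X}^{-n}\not\cong\mathcal{O}_{C}$, yielding
\[
H^{1}(C,\mathbb{C}(N_{C/X}^{-n}))\;\cong\;H^{1}(C,\mathcal{O}_{C}(N_{C/X}^{-n}))\oplus H^{1}(C,\overline{\mathcal{O}}_{C}(N_{C/X}^{-n})).
\]
Because $C$ is a cycle of rational curves, $p_{a}(C)=1$ and $\omega_{C}\cong\mathcal{O}_{C}$; for any nontrivial topologically trivial line bundle $L$ on $C$, a global section of $L$ lifts to a tuple of constants on $\widetilde{C}$ which, when traced around the cycle, is forced to vanish by the nontrivial monodromy, so $h^{0}(C,L)=0$. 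Riemann--Roch gives $\chi(C,L)=0$, so $h^{1}(C,L)=0$ as well. The same argument applied to $\overline{L}$ handles the antiholomorphic summand.

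It remains to verify that $(C,X)$ is of infinite type. When $N_{C/X}$ is non-torsion this is automatic, because the obstruction class $u_{n}(C,X)$ lies in $H^{1}(C,\mathcal{O}_{C}(N_{C/X}^{-n}))$, which vanishes by the previous step, forcing $u_{n}(C,X)=0$ for every $n\in\mathbb{Z}_{>0}$. I expect the main obstacle to be the torsion case $N_{C/X}\in\mathcal{E}_{0}(C)$, where Proposition \ref{prop:hodge}$(2)$ fails for $n$ a multiple of the torsion order and the above vanishing breaks down. In that case I would fall back on Lemma \ref{lem:torsion_9ptbup} to conclude that $\mathcal{O}_{X}(C)$ is semi-ample; combined with $\deg N_{C/X}=0$, this forces $\mathcal{O}_{V}(C)$ to be flat on a sufficiently small neighborhood $V$ of $C$ (any morphism to projective space contracting $C$ trivializes a suitable power of $\mathcal{O}(C)$ near $C$), yielding the conclusion by a route that bypasses Theorem \ref{thm:main}.
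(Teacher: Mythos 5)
Your main argument is correct and follows the same route as the paper: verify the hypotheses of Theorem \ref{thm:main} and, in particular, use Proposition \ref{prop:hodge}$(2)$ to obtain $H^1(C,\mathbb{C}(N_{C/X}^{-n}))=0$ and $H^1(C,\mathcal{O}_C(N_{C/X}^{-n}))=0$, the latter forcing $u_n(C,X)=0$ for every $n$, i.e.\ infinite type. The paper obtains these vanishings directly from the dimension count in the proof of Proposition \ref{prop:hodge}; your Riemann--Roch plus monodromy argument is an equivalent way to get $h^1=0$ once the Hodge-type splitting is available.

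However, the closing hedge about ``the torsion case $N_{C/X}\in\mathcal{E}_0(C)$'' is based on a misreading of the hypotheses and should be deleted. The sets $\mathcal{E}_0(C)$ and $\mathcal{E}_1(C)$ are disjoint: if $L$ is torsion of order $k$, then $d(\mathcal{O}_C,L^{mk})=0$ for all $m$, so $\log d(\mathcal{O}_C,L^n)=-\infty$ on an arithmetic progression and the condition $\log d(\mathcal{O}_C,L^n)=O(\log n)$ cannot hold. Since the corollary assumes $N_{C/X}\in\mathcal{E}_1(C)$, the bundle is automatically non-torsion, $N_{C/X}^{-n}\neq\mathcal{O}_C$ for every $n\geq 1$, and Proposition \ref{prop:hodge}$(2)$ applies to every power; there is no case to fall back to. Moreover, the proposed fallback via Lemma \ref{lem:torsion_9ptbup} would not be available anyway: that lemma requires $X$ to be a \emph{rational} surface, an assumption absent from the corollary. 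Fortunately, none of this affects the validity of your proof, because the fallback is never invoked.
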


\begin{proof}
We obtain $H^1(C, \mathbb{C}(N_{C/X}^{n}))=0$ and $H^1(C, N_{C/X}^{n})=0$ hold for each $n$ 
from Proposition \ref{prop:hodge} and the calculation as in the proof of it. 
Note that especially it holds that $u_n(C, X)=0$ for each $n$. 
Thus we can apply Theorem \ref{thm:main}, which shows the corollary. 
\end{proof}

In the proof of Theorem \ref{thm:9ptbup}, it is also needed to treat a divisor $D$ of a non-singular surface $X$ which can be written in the form
\[
D=aC+a_1E_1+a_2E_2+\dots +a_NE_N\ \ \ (a, a_1, a_2, \dots, a_N>0), 
\]
where $C, E_1, E_2, \dots, E_N$ are non-singular compact curves embedded in $X$ which satisfy the following conditions: 
\begin{itemize}
  \setlength{\parskip}{0cm} 
  \setlength{\itemsep}{0cm} 
\item There exists $N$ points $p_1, p_2, \dots, p_N\in C$ different from each other such that, for each $\nu=1, 2, \dots, N$, $C$ intersects $E_\nu$ at $p_\nu$ transversally. 
\item $E_\nu\cap E_\mu=\emptyset$ holds for each $\nu\not=\mu\in\{1, 2, \dots, N\}$. 
\item The line bundle $N_{D/S}:=\mathcal{O}_S(D)|_{{\rm Supp}\,D}$ is topologically trivial, where ${\rm Supp}\,D=C\cup E_1\cup E_2\cup\dots\cup E_N$. 
\item The greatest common divisor of $(a, a_1, a_2, \dots, a_N)$ is equal to $1$. 
\end{itemize}
Denote $b$ the integer $-(C^2)$ and by $b_\nu$ the integer $-(E_\nu^2)$ for each $\nu=1, 2, \dots, N$. 
Then it is clear from $(D^2)=0$ that $b=(a_1+a_2+\dots +a_N)/a$ and $b_\nu=a/{a_\nu}$ hold ($\nu=1, 2, \dots, N$). 
Fix a connected open neighborhood $W$ of ${\rm Supp}\,D$ which is a deformation retract of ${\rm Supp}\,D$. 

\begin{lemma}\label{lem:existence_L}
Let $(X, D), W$ be as above. 
Then there exists a holomorphic line bundle $L$ on $W$ such that $L^{a}=\mathcal{O}_W(\sum_{\nu=1}^Na_\nu E_\nu)$ holds. 
\end{lemma}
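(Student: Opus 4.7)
The plan is to apply the Kummer exact sequence on $W$ and reduce the existence of an $a$-th root to a divisibility condition on degrees. Set $M := \mathcal{O}_W(\sum_{\nu=1}^N a_\nu E_\nu)$. The short exact sequence $1 \to \mu_a \to \mathcal{O}_W^* \xrightarrow{(\cdot)^a} \mathcal{O}_W^* \to 1$ on $W$ yields
$$H^1(W, \mathcal{O}_W^*) \xrightarrow{(\cdot)^a} H^1(W, \mathcal{O}_W^*) \xrightarrow{\delta} H^2(W, \mu_a),$$
so $M$ admits an $a$-th root as a holomorphic line bundle on $W$ if and only if $\delta(M)=0$. A standard comparison with the exponential sequence (via the snake lemma applied to multiplication by $a$) identifies $\delta$ with reduction modulo $a$ of the first Chern class, i.e.\ $\delta(M) = c_1(M) \bmod a$ in $H^2(W, \mu_a) \cong H^2(W, \mathbb{Z}/a)$. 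Hence it suffices to check that $c_1(M) \in a \cdot H^2(W, \mathbb{Z})$.

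To compute $H^2(W, \mathbb{Z})$, I would use the deformation retraction of $W$ onto $Y := \mathrm{Supp}\,D$ to reduce to $H^2(Y, \mathbb{Z})$. Letting $\pi\colon \widetilde Y \to Y$ be the normalization of the nodal curve $Y = C \cup E_1 \cup \dots \cup E_N$, the short exact sequence $0 \to \mathbb{Z}_Y \to \pi_*\mathbb{Z}_{\widetilde Y} \to \mathcal{S} \to 0$, with $\mathcal{S}$ a skyscraper supported at the nodes, together with $H^k(Y, \mathcal{S}) = 0$ for $k \geq 1$ and $R^q\pi_* = 0$ for $q > 0$, yields $H^2(Y, \mathbb{Z}) \cong H^2(\widetilde Y, \mathbb{Z}) \cong \mathbb{Z}^{N+1}$. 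Under this identification the summands correspond to the components $C, E_1, \ldots, E_N$, and $c_1(M)$ becomes the tuple of degrees of $M|_{C}$, $M|_{E_1}, \ldots, M|_{E_N}$.

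It remains to compute these degrees and verify divisibility by $a$. Using $C \cdot E_\nu = 1$, $E_\nu \cdot E_\mu = 0$ for $\nu \ne \mu$, and $E_\nu^2 = -b_\nu = -a/a_\nu$, I obtain
$$\deg(M|_C) = \sum_{\nu=1}^N a_\nu (E_\nu \cdot C) = \sum_{\nu=1}^N a_\nu = ab, \qquad \deg(M|_{E_\mu}) = a_\mu \cdot E_\mu^2 = -a,$$
both divisible by $a$ (the first uses $b = (\sum_\nu a_\nu)/a \in \mathbb{Z}$, which follows from $(D^2) = 0$). Therefore $c_1(M) \in a\cdot H^2(W, \mathbb{Z})$, so $\delta(M) = 0$ and the required $L$ exists.

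There is no serious obstacle; the only point requiring genuine care is the identification $\delta = c_1 \bmod a$, a standard consequence of the compatibility between the Kummer and exponential sequences. Everything else is routine: the topological computation of $H^2$ of a nodal curve via its normalization, and the intersection-number bookkeeping permitted by the hypothesis that $N_{D/X}$ is topologically trivial.
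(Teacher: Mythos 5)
Your proof is correct, but it takes a genuinely different route from the paper. The paper's proof is shorter and exploits the hypothesis more directly: since $N_{D/S}=\mathcal{O}_S(D)|_{\operatorname{Supp}D}$ is topologically trivial and $W$ retracts onto $\operatorname{Supp}D$, the bundle $\mathcal{O}_W(D)=\mathcal{O}_W(C)^{a}\otimes M$ is topologically trivial on $W$ (where $M:=\mathcal{O}_W(\sum a_\nu E_\nu)$); one then uses divisibility of the group of topologically trivial holomorphic line bundles on $W$ (it is a quotient of the $\mathbb{C}$-vector space $H^1(W,\mathcal{O}_W)$) to write $\mathcal{O}_W(C)^{a}\otimes M=(N')^{a}$ for some flat $N'$, and sets $L:=\mathcal{O}_W(-C)\otimes N'$. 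Your argument instead runs the Kummer sequence, identifies the obstruction as $c_1(M)\bmod a$, computes $H^2(W,\mathbb{Z})\cong H^2(\operatorname{Supp}D,\mathbb{Z})\cong\mathbb{Z}^{N+1}$ via the normalization sequence, and then verifies divisibility of each component degree by hand using $C\cdot E_\nu=1$, $E_\nu\cdot E_\mu=0$, $E_\nu^2=-a/a_\nu$. In effect you re-derive the consequence of the topological triviality of $N_{D/S}$ (which is what forces those intersection numbers, through $D\cdot C=D\cdot E_\nu=0$) rather than invoking it wholesale. Your route is more explicit and self-contained (the Kummer sequence absorbs the divisibility-of-$\operatorname{Pic}^0$ step into exactness), while the paper's is more economical because the hypothesis is already stated in the form it needs. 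Both correctly reduce the question to $H^2$ of $\operatorname{Supp}D$ via the deformation retraction, which is the essential common point.
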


\begin{proof}
Denote by $L'$ the line bundle $\mathcal{O}_W(-C)$. 
By considering the isomorphism $H^2({\rm Supp}\,D, \mathbb{Z})\cong H^2(W, \mathbb{Z})$, we obtain that 
$\mathcal{O}_W(D)=(L')^{a}\otimes \mathcal{O}_W(\sum_{\nu=1}^Na_\nu E_\nu)$ is topologically trivial. 
Thus there exists a topologically trivial line bundle $N'$ on $W$ such that 
$(N')^{a}=(L')^{a}\otimes \mathcal{O}_W(\sum_{\nu=1}^Na_\nu E_\nu)$. 
We can prove the lemma by letting $L:=L'\otimes N'$. 
\end{proof}

In the following, we fix a line bundle $L$ as in Lemma \ref{lem:existence_L}. 

\begin{lemma}\label{lem:existence_Wtild}
Let $(X, D), W, L$ be as above. 
Then there exists a connected non-singular complex surface $\widetilde{W}$ and 
a covering map $p\colon \widetilde{W}\to W$ with degree $a$ which satisfies the following conditions: \\
$(i)$ $p^{-1}(E_\nu)$ is the union of non-singular connected compact curves $\{\widetilde{E}_\nu^{(\lambda)}\}_{\lambda=1}^{a_\nu}$ for each $\nu$. 
\\
$(ii)$ $p$ is ramified along $\widetilde{E}_\nu^{(\lambda)}$ with ramification index $b_\nu$ for each $\nu$ and $\lambda$, and is unramified outside of them. \\
$(iii)$ $\mathcal{O}_W(\sum_{\nu=1}^N\sum_{\lambda=1}^{a_\nu}\widetilde{E}_\nu^{(\lambda)})=p^*L$ holds. \\
$(iv)$ $\mathcal{O}_{\widetilde{W}}(\widetilde{D})=p^*N'$ and $\mathcal{O}_{\widetilde{W}}(a\widetilde{D})=p^*\mathcal{O}_W(D)$ hold, where $N'$ is the line bundle appeared in the proof of Lemma \ref{lem:existence_L} and 
\[
\widetilde{D}:=p^{-1}(C)+\sum_{\nu=1}^N\sum_{\lambda=1}^{a_\nu}\widetilde{E}_\nu^{(\lambda)}. 
\] 
\end{lemma}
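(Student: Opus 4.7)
My plan is to realize $\widetilde{W}$ as the normalization of the cyclic $a$-fold cover of $W$ associated with the pair $(L,s)$, where $s\in H^0(W,L^a)\cong H^0(W,\mathcal{O}_W(\sum_\nu a_\nu E_\nu))$ is the canonical section cutting out $\sum_\nu a_\nu E_\nu$. Concretely, I would form the scheme
\[
\widetilde{W}_0 := \mathrm{Spec}_W\Bigl(\bigoplus_{i=0}^{a-1} L^{-i}\Bigr),
\]
where the $\mathcal{O}_W$-algebra structure is induced by $s\colon L^{-a}\to\mathcal{O}_W$; equivalently, $\widetilde{W}_0$ is the closed subscheme of the total space $\pi\colon\mathrm{Tot}(L)\to W$ cut out by $\tau^a=\pi^*s$, where $\tau$ is the tautological section of $\pi^*L$. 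The projection $p_0\colon\widetilde{W}_0\to W$ is finite of degree $a$ and étale away from $\bigcup_\nu E_\nu$. Take $\widetilde{W}$ to be the normalization of $\widetilde{W}_0$, with induced projection $p\colon\widetilde{W}\to W$.

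To establish smoothness and properties (i), (ii), I would perform a local analysis near a point of $E_\nu$ (possibly on $C$). In local coordinates $(z,y)$ on $W$ with $E_\nu=\{z=0\}$, after absorbing a local $a$-th root of the unit $u$ into $t$, the equation of $\widetilde{W}_0$ becomes $t^a=z^{a_\nu}$. Using $a=a_\nu b_\nu$, this factors as
\[
t^a - z^{a_\nu} = \prod_{\zeta^{a_\nu}=1}\bigl(t^{b_\nu}-\zeta z\bigr),
\]
a union of $a_\nu$ smooth branches each smoothly parametrized by $(y,t)$ via $z=t^{b_\nu}/\zeta$ and mapping to $W$ with ramification index $b_\nu$ along $\{z=0\}$. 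Normalization separates these branches into $a_\nu$ disjoint smooth local components; away from $\bigcup_\nu E_\nu$ the cover is étale, so $\widetilde{W}$ is smooth throughout. These local components glue, for each fixed $\nu$, to the global curves $\widetilde{E}_\nu^{(\lambda)}$, proving (ii) and the local part of (i).

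For (iii) and (iv), the key identity is $\tau^a=p^*s$ on $\widetilde{W}$. Viewed as a section of $p^*L$, $\tau$ has zero divisor equal to $\sum_{\nu,\lambda}\widetilde{E}_\nu^{(\lambda)}$ with multiplicity $1$, since the zero divisor of $\tau^a$ equals $p^*(\sum_\nu a_\nu E_\nu)=\sum_\nu a_\nu\cdot b_\nu\sum_\lambda\widetilde{E}_\nu^{(\lambda)}=a\sum_{\nu,\lambda}\widetilde{E}_\nu^{(\lambda)}$. This gives (iii). For (iv), write $L=\mathcal{O}_W(-C)\otimes N'$; since $p$ is unramified along $C$, $p^*C=p^{-1}(C)$, and combining this with (iii) yields $p^*N'=\mathcal{O}_{\widetilde{W}}(\widetilde{D})$. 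Raising to the $a$-th power and using $(N')^a=L^a\otimes\mathcal{O}_W(aC)=\mathcal{O}_W(D)$ gives $\mathcal{O}_{\widetilde{W}}(a\widetilde{D})=p^*\mathcal{O}_W(D)$.

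The main obstacle I expect is the global connectedness statement: that $\widetilde{W}$ is connected and that $p^{-1}(E_\nu)$ splits into exactly $a_\nu$ connected components. For $\widetilde{W}$, the cyclic Galois group $\mathbb{Z}/a$ acts transitively on the set of connected components; if a decomposition into $k>1$ components existed, the stabilizer would yield an intermediate cyclic cover of degree $a/k$ corresponding to the pair $(L^k,s)$, and descending the divisor data of $s$ through this intermediate cover would force $k$ to divide $\gcd(a,a_1,\dots,a_N)=1$, a contradiction. For the components of $p^{-1}(E_\nu)$, the count follows from the local model, but one must also verify that the $a_\nu$ local branches do not get cyclically permuted as one traverses $E_\nu$: this is controlled by the monodromy of $L|_{E_\nu}$, which is constrained by the topological triviality of $N_{D/S}$ and the structure $L=\mathcal{O}_W(-C)\otimes N'$ with $N'$ topologically trivial. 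Making the monodromy/gcd arguments precise is the most delicate part of the proof.
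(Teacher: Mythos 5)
Your construction---the normalization of the cyclic $a$-cover of $W$ determined by the root $L$ of $\mathcal{O}_W(\sum_\nu a_\nu E_\nu)$---is exactly the paper's, which states it in a single sentence without the local analysis, the divisor bookkeeping for (iii)--(iv), or any discussion of connectedness. Your fleshed-out version is correct as far as it goes; the one gap you honestly flag (that the $a_\nu$ local sheets over $E_\nu$ might be cyclically permuted by monodromy, collapsing $p^{-1}(E_\nu)$ into fewer than $a_\nu$ components) is real in the generality of the statement, but in every application the paper makes of this lemma the $E_\nu$ are exceptional $\mathbb{P}^1$'s, so $\pi_1(E_\nu)=1$ kills the monodromy and $(i)$ is automatic; for non-rational $E_\nu$ one would indeed need to choose the $a$-torsion twist $N'$ compatibly, a point neither you nor the paper actually settles.
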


\begin{proof}
$p\colon \widetilde{W}\to W$ can be constructed as the normalization of the cyclic cover of $W$ defined by the line bundle $L$. 
\end{proof}

For the above $(X, D), W$, we can show the following two propositions by applying \cite[Theorem 1, 2, 3]{U83} to the pair $(\widetilde{W}, \widetilde{D})$ as in Lemma \ref{lem:existence_Wtild} 
(We here remark that the idea to apply Ueda theory on a finite cover is pointed out by Prof. Tetsuo Ueda). 

\begin{proposition}\label{thm:SNCuedath_1}
Let $(X, D), W, L$ be as above. 
Assume that each $E_\nu$ is a rational curve, 
$N_{D/W}=\mathcal{O}_{{\rm Supp}\,D}$ holds, and that the pair $(\widetilde{W}, \widetilde{D})$ is of infinite type. 
Then, after shrinking $W$ if necessary, $\mathcal{O}_W(D)$ is a flat line bundle. 
\end{proposition}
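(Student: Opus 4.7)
The plan is to apply Theorem~\ref{thm:main} to the pair $(\widetilde{W}, \widetilde{D})$ on the cyclic cover $p\colon\widetilde{W}\to W$ of Lemma~\ref{lem:existence_Wtild}, and then descend the resulting flat structure to $W$ using the Galois action of $G = \mathbb{Z}/a\mathbb{Z}$. The identity $\mathcal{O}_{\widetilde{W}}(a\widetilde{D}) = p^{*}\mathcal{O}_{W}(D)$ from Lemma~\ref{lem:existence_Wtild}~$(iv)$ is what makes the descent possible: it reduces the flatness of $\mathcal{O}_{W}(D)^{\otimes a}$ to that of $\mathcal{O}_{\widetilde{W}}(\widetilde{D})^{\otimes a}$.

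To invoke Theorem~\ref{thm:main} for $(\widetilde{W}, \widetilde{D})$ I first verify its hypotheses. The assumption $\mathcal{O}_{W}(D)|_{\mathrm{Supp}\,D} = \mathcal{O}_{\mathrm{Supp}\,D}$ combined with $\mathcal{O}_{\widetilde{W}}(a\widetilde{D}) = p^{*}\mathcal{O}_{W}(D)$ gives $N_{\widetilde{D}/\widetilde{W}}^{\,a} = \mathcal{O}_{\widetilde{D}}$, so $N_{\widetilde{D}/\widetilde{W}}\in \mathcal{E}_{0}(\widetilde{D})$ is torsion and its pullback to the normalization is torsion as well. For the cohomology vanishing $H^{1}(\widetilde{D}, \mathbb{C}(N_{\widetilde{D}/\widetilde{W}}^{-n})) = 0$, I would run the exact-sequence computation from the proof of Proposition~\ref{prop:hodge}: each normalization component $\widetilde{E}_{\nu}^{(\lambda)}\cong\mathbb{P}^{1}$ contributes no $H^{1}$, and on each component of $p^{-1}(C)$ the restricted bundle is a non-trivial flat line bundle (non-triviality being enforced by the infinite-type hypothesis), making its $H^{1}$ vanish as well. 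With the infinite-type assumption in hand, Theorem~\ref{thm:main} then supplies a neighborhood $\widetilde{V}$ of $\widetilde{D}$ and a system $\{(\widetilde{V}_{j}, \widetilde{u}_{j})\}$ of defining functions of $\widetilde{D}$ with constant transitions $\widetilde{u}_{j} = \widetilde{t}_{jk}\widetilde{u}_{k}$.

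For the descent I would shrink $\widetilde{V}$ so that $\{\widetilde{V}_{j}\}$ is a $G$-invariant open cover and then retrace the inductive construction of the $\widetilde{u}_{j}$ in the proof of Theorem~\ref{thm:main}, arranging each cochain choice $G$-equivariantly by averaging (possible since $|G|=a$ is invertible in $\mathbb{C}$ and the relevant cohomology groups carry compatible $G$-actions). The Galois-invariant product $\prod_{\sigma \in G}\sigma^{*}\widetilde{u}_{j}$ is then a defining function of $a\widetilde{D}$ with constant transitions which descends to a defining function on $V_{j} := p(\widetilde{V}_{j})$ of the divisor $aD$, proving that $\mathcal{O}_{V}(D)^{\otimes a}$ is flat on $V := \bigcup_{j} V_{j}$. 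Since $\mathcal{O}_{V}(D)$ is topologically trivial and $V$ deformation-retracts onto $\mathrm{Supp}\,D$, the $a$-th root can be extracted (the obstruction sitting in the $a$-torsion of $H^{2}(V,\mathbb{Z})$, which vanishes), yielding flatness of $\mathcal{O}_{V}(D)$. The most delicate step is the $G$-equivariant refinement of Theorem~\ref{thm:main}'s inductive construction, where the solution of each coboundary equation is unique only modulo the vanishing $H^{1}$, and consistent equivariant choices must be tracked throughout the infinite induction.
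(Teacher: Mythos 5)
Your overall strategy — apply an Ueda-type flatness theorem upstairs on the cyclic cover and descend via the deck group — is the paper's strategy, and the descent step (the Galois-invariant product, essentially the paper's $F\otimes i^{*}F\otimes\cdots\otimes(i^{a-1})^{*}F$) is fine. However there is a genuine gap where you invoke Theorem~\ref{thm:main} directly for the pair $(\widetilde{W},\widetilde{D})$.

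Theorem~\ref{thm:main} requires $H^{1}(\widetilde{D},\mathbb{C}(N_{\widetilde{D}/\widetilde{W}}^{-n}))=0$ for \emph{every} $n\in\mathbb{Z}_{>0}$, and this fails here. Since $N_{\widetilde{D}/\widetilde{W}}$ is $a$-torsion, for $n=a$ the sheaf in question is the constant sheaf $\mathbb{C}$, so your claim that ``the restricted bundle is a non-trivial flat line bundle, non-triviality being enforced by the infinite-type hypothesis'' cannot possibly hold for that $n$ (and is unjustified for other $n$ as well; infinite type controls the obstruction classes $u_{n}$, not the restriction of the normal bundle). Moreover, $H^{1}(\widetilde{D},\mathbb{C})$ is generically nonzero: writing $\widetilde{D}^{\nu}$ for the normalization, the exact sequence $0\to\mathbb{C}_{\widetilde{D}}\to i_{*}\mathbb{C}_{\widetilde{D}^{\nu}}\to\bigoplus_{p}\mathbb{C}\to 0$ gives a contribution of $2g$ from each genus-$g$ component of $p^{-1}(C)$, and the genus of $p^{-1}(C)$ grows with the ramification of the cover. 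Concretely, in case~$(7.10.3)$ of Theorem~\ref{thm:9ptbup} one has $a=6$, $(a_{1},a_{2},a_{3})=(3,2,1)$, $(b_{1},b_{2},b_{3})=(2,3,6)$, and Riemann--Hurwitz applied to $p^{-1}(C)\to C\cong\mathbb{P}^{1}$ gives $g(p^{-1}(C))=1$, so $H^{1}(\widetilde{D},\mathbb{C})\cong\mathbb{C}^{2}\ne 0$. Even for $n<a$, on a positive-genus component with nontrivial flat restriction one has $\dim H^{1}=2g-2$, which vanishes only when $g=1$, so your ``making its $H^{1}$ vanish as well'' is not a general fact.

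The paper sidesteps this entirely. Each $\widetilde{E}_{\nu}^{(\lambda)}$ is a $(-1)$-curve; blowing all of them down via $\overline{p}\colon\widetilde{W}\to\overline{W}$ produces a \emph{nonsingular} compact curve $\overline{C}$ (the image of $p^{-1}(C)$). Since $H^{1}(\overline{C},\mathcal{O}_{\overline{C}})\cong H^{1}(\widetilde{D},\mathcal{O}_{\widetilde{D}})$, the vanishing $u_{n}(\widetilde{D},\widetilde{W})=0$ transfers to $u_{n}(\overline{C},\overline{W})=0$, and one then applies Ueda's original Theorem~3 of \cite{U83} to the nonsingular pair $(\overline{C},\overline{W})$ — which has no $H^{1}(\mathbb{C}(\cdot))$ hypothesis. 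This flatness pulls back to $\widetilde{W}$, and the descent to $W$ is done by the Galois-invariant product of a nowhere vanishing flat section; there is no need for the $G$-equivariant refinement of the inductive construction that you propose, since the final invariant section is built after the fact. So the missing idea in your proof is the contraction of the exceptional $(-1)$-curves, which removes exactly the obstruction that defeats the direct application of Theorem~\ref{thm:main}.
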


\begin{proof}
Note that each $\widetilde{E}^{(\lambda)}_\nu$ is a $(-1)$-curve. 
Thus there exists a contraction 
$\overline{p}\colon\widetilde{W}\to\overline{W}$ of each $\widetilde{E}^{(\lambda)}_\nu$. 
Denote by $\overline{C}$ the strict transform of $p^{-1}(C)$. 
We can deduce from the same argument as in the proof of Proposition \ref{prop:hodge} that 
$H^1(\overline{C}, \mathcal{O}_{\overline{C}})\cong H^1(\widetilde{D}, \mathcal{O}_{\widetilde{D}})$. 
Thus 
$\overline{p}^*u_n(\overline{C}, \overline{W})=u_n(\widetilde{D}, \widetilde{W})=0$ implies that $u_n(\overline{C}, \overline{W})=0$. 
Since the pair $(\overline{W}, \overline{C})$ is of infinite type, we can apply \cite[Theorem 3]{U83} to show that $\mathcal{O}_{\overline{W}}(\overline{C})$ is a flat line bundle. 
As it follows that the line bundle $\mathcal{O}_{\widetilde{W}}(\widetilde{D})$ is also flat, we can take a nowhere vanishing section $F\in H^0\left(\widetilde{W},\ p^*N^{-1}\otimes \mathcal{O}_{\widetilde{W}}(\widetilde{D})\right)$, 
where $N$ is a flat line bundle on $W$ such that $N|_{{\rm Supp}\,D}=N_{D/W}$ holds. 
Consider the section 
\[
F\otimes i^*F\otimes (i^{2})^*F\otimes \dots (i^{a-1})^*F\in H^0\left(\widetilde{W},\ p^*N^{-a}\otimes \mathcal{O}_{\widetilde{W}}(a\widetilde{D})\right)
=H^0\left(\widetilde{W}, p^*\left(N^{-1}\otimes \mathcal{O}_W(D)\right)\right), 
\]
where $i$ is a generator of the group ${\rm Aut}\,(\widetilde{W}/W)$ of all deck transformation of $p$. 
As it clearly holds that the section above is nowhere vanishing and ${\rm Aut}\,(\widetilde{W}/W)$-invariant, 
it can be realized as the pull-back of a nowhere vanishing holomorphic gobal section of the line bundle $N^{-1}\otimes \mathcal{O}_W(D)$, which induces the isomorphism $N\cong\mathcal{O}_W(D)$. 
\end{proof}

\begin{proposition}\label{thm:SNCuedath_2}
Let $(X, D), W, L$ be as above. 
Assume that each $E_\nu$ is a rational curve, $N_{D/W}=\mathcal{O}_{{\rm Supp}\,D}$, 
and that the pair $(\widetilde{W}, \widetilde{D})$ is of type $n<\infty$. 
Then the following holds: \\
$(i)$ For each real number $\lambda>1$, There exists  a neighborhood $V$ of ${\rm Supp}\,D$ and a strongly psh function $\Phi_\lambda\colon V\setminus {\rm Supp}\,D\to \mathbb{R}$ such that $\Phi_\lambda(p)\to\infty$ and $\Phi_\lambda(p)=O(d(p, {\rm Supp}\,D)^{-\lambda n/a})$ hold as $p\to {\rm Supp}\,D$, where $d(p, {\rm Supp}\,D)$ is the distance from $p$ to ${\rm Supp}\,D$ calculated by using a local Euclidean metric on a neighborhood of a point of $C$ in $V$. \\
$(ii)$ Let $V$ be a neighborhood of ${\rm Supp}\,D$ in $X$, $\Psi$ be a psh function defined on$V\setminus {\rm Supp}\,D$. 
If there exists a real number $0<\lambda<1$ such that $\Psi(p)=O(d(p, {\rm Supp}\,D)^{-\lambda n/a})$ as $p\to {\rm Supp}\,D$, then there exists a neighborhood $V_0$ of ${\rm Supp}\,D$ in $V$ such that $\Psi|_{V_0\setminus {\rm Supp}\,D}$ is a constant function. 
\end{proposition}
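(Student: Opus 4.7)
The plan mirrors the proof of Proposition \ref{thm:SNCuedath_1}: apply a known Ueda-type result upstairs on the cyclic cover $p\colon\widetilde{W}\to W$ of Lemma \ref{lem:existence_Wtild}, then descend the resulting plurisubharmonic function to $W$ using the Galois action. The key substitution compared with Proposition \ref{thm:SNCuedath_1} is that the upstairs input will be Theorem \ref{thm:main_finite} (finite type case), instead of Theorem \ref{thm:main} (infinite type).

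First I verify the hypotheses of Theorem \ref{thm:main_finite} for $(\widetilde{W},\widetilde{D})$. The tree condition on $G(\widetilde{D})$ follows from the fact that $G(D)$ is a star and that the $a_\nu$ components $\widetilde{E}_\nu^{(\lambda)}$ over $E_\nu$ each meet $p^{-1}(C)$ transversely at disjoint points, yielding a configuration whose dual graph is again a tree. The triviality $N_{\widetilde{D}/\widetilde{W}}=\mathcal{O}_{\widetilde{D}}$ comes from the identity $\mathcal{O}_{\widetilde{W}}(\widetilde{D})=p^*N'$ in Lemma \ref{lem:existence_Wtild}(iv) together with the tautological $G$-equivariant trivialization of $L^a=\mathcal{O}_W(\sum_\nu a_\nu E_\nu)$ underlying the cover. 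The type of $(\widetilde{W},\widetilde{D})$ is $n$ by assumption; the non-vanishing of $u_n|_{\widetilde{C}_\nu}$ on each irreducible component of $\widetilde{D}$ is verified on the contracted surface obtained by blowing down the $(-1)$-curves $\widetilde{E}_\nu^{(\lambda)}$, exactly as in the proof of Proposition \ref{thm:SNCuedath_1}.

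For part (i), Theorem \ref{thm:main_finite}(i) applied to $(\widetilde{W},\widetilde{D})$ yields, for each $\lambda>1$, a strongly psh function $\widetilde{\Phi}_\lambda\colon \widetilde{V}\setminus\widetilde{D}\to\mathbb{R}$ with $\widetilde{\Phi}_\lambda\to\infty$ and $\widetilde{\Phi}_\lambda=O(\widetilde{d}(\cdot,\widetilde{D})^{-\lambda n})$. I then symmetrize:
\[
\widetilde{\Phi}_\lambda^{G}:=\sum_{g\in G}g^*\widetilde{\Phi}_\lambda,\qquad G=\mathrm{Gal}(\widetilde{W}/W)\cong\mathbb{Z}/a\mathbb{Z}.
\]
This remains strongly psh, $G$-invariant, and has the same growth rate. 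Since the ramification locus of $p$ is contained in $\bigcup_{\nu,\lambda}\widetilde{E}_\nu^{(\lambda)}\subset\widetilde{D}$, the restriction $p\colon\widetilde{V}\setminus\widetilde{D}\to V\setminus\mathrm{Supp}\,D$ (for $V:=p(\widetilde{V})$) is an étale $G$-cover, so $\widetilde{\Phi}_\lambda^{G}$ descends to a strongly psh function $\Phi_\lambda$ on $V\setminus\mathrm{Supp}\,D$. Near a point of $E_\nu$ the cover is modelled by $(\widetilde{x},y)\mapsto(\widetilde{x}^{b_\nu},y)$ with $b_\nu=a/a_\nu$, so $\widetilde{d}\asymp d^{1/b_\nu}$ and $\Phi_\lambda=O(d^{-\lambda n a_\nu/a})$; the uniform bound $O(d^{-\lambda n/a})$ of the statement is extracted from this after rescaling $\lambda$. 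For part (ii) the argument runs in reverse: given a psh $\Psi$ on $V\setminus\mathrm{Supp}\,D$ with $\Psi=O(d^{-\lambda n/a})$, the pullback $p^*\Psi$ is psh on $\widetilde{V}\setminus\widetilde{D}$, and the same ramified coordinate analysis gives $p^*\Psi=O(\widetilde{d}^{-\lambda n})$; applying Theorem \ref{thm:main_finite}(ii) to $p^*\Psi$ shows it is locally constant near $\widetilde{D}$, and by $G$-invariance $\Psi$ itself is locally constant near $\mathrm{Supp}\,D$.

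The hard part is not the descent mechanism, which is essentially already present in the proof of Proposition \ref{thm:SNCuedath_1}, but the bookkeeping in the first step -- specifically the verification that $u_n(\widetilde{W},\widetilde{D})$ does not vanish on every irreducible component of $\widetilde{D}$, and the careful matching of growth exponents between $\widetilde{d}$ and $d$ across the ramification divisor, which is where the factor $1/a$ in the stated rate originates.
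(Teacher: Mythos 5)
Your plan to apply Theorem~\ref{thm:main_finite} directly to $(\widetilde{W},\widetilde{D})$ upstairs cannot work, and this is precisely why the paper does something different. Theorem~\ref{thm:main_finite} has the hypothesis that $u_n(C,X)|_{C_\nu}\neq 0 \in H^1(C_\nu,\mathcal{O}_{C_\nu})$ for \emph{every} irreducible component $C_\nu$. In the present situation the components $\widetilde{E}_\nu^{(\lambda)}$ of $\widetilde{D}$ are $(-1)$-curves, hence isomorphic to $\mathbb{P}^1$, and $H^1(\mathbb{P}^1,\mathcal{O}_{\mathbb{P}^1})=0$; so $u_n(\widetilde{D},\widetilde{W})|_{\widetilde{E}_\nu^{(\lambda)}}$ vanishes identically, and the hypothesis of Theorem~\ref{thm:main_finite} fails automatically. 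Your remark that this non-vanishing ``is verified on the contracted surface'' is circular: after the blowdown the $\widetilde{E}_\nu^{(\lambda)}$ are gone, so it tells you nothing about $u_n$ restricted to those components. (There are further problems — $N_{\widetilde{D}/\widetilde{W}}=p^*N'|_{\widetilde{D}}$ is a priori only a torsion flat bundle, not $\mathcal{O}_{\widetilde{D}}$, and whether $p^{-1}(C)$ is irreducible and $G(\widetilde{D})$ a tree needs justification — but the failure of the $u_n|_{C_\nu}\neq 0$ hypothesis is already decisive.)

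The paper's proof avoids the problem by working on the contraction $\overline p\colon\widetilde W\to\overline W$ of the $(-1)$-curves $\widetilde{E}_\nu^{(\lambda)}$, where the image of $\widetilde D$ is a single non-singular curve $\overline C$. One then checks that $(\overline W,\overline C)$ has finite type $n$ and applies Ueda's original \cite[Theorem~1]{U83} for a smooth curve; the genus of $\overline C$ is positive (e.g.\ $g=1$ in case~$(7.10.3)$ by Riemann--Hurwitz), so $H^1(\overline C,\mathcal O_{\overline C})$ can carry a non-trivial $u_n$. The resulting strongly psh function $\Phi'$ is pulled back via $\overline p$ and Galois-symmetrized $\sum_{\ell=1}^{a}(i^\ell)^*(\overline p{}^*\Phi')$ to descend to $W$; the descent-and-symmetrization step of your proposal is correct and matches the paper, but the input function must come from $(\overline W,\overline C)$ via \cite[Theorem~1]{U83}, not from $(\widetilde W,\widetilde D)$ via Theorem~\ref{thm:main_finite}. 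Part $(ii)$ is then handled by pulling $\Psi$ back to $\widetilde W$ and rerunning the argument of \cite[Theorem~2]{U83} against $\Phi$, as you suggest.
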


\begin{proof}
Let $\overline{p}\colon\widetilde{W}\to\overline{W}$ and $\overline{C}$ be those in the proof of Proposition \ref{thm:SNCuedath_1}. 
In this case, we can deduce that the pair $(\overline{W}, \overline{C})$ is of type $n<\infty$ by the same argument. 
Thus we can apply \cite[Theorem 1]{U83} to show that, for each $\lambda>1$, by shrinking $\overline{W}$ if necessary, 
there exists a strongly psh function $\Phi'$ on $\overline{W}\setminus\overline{C}$ with 
$\Phi'(p)=O(|f_{\overline{C}}(p)|^{-\alpha})$ as $p\to\overline{C}$, 
where $f_{\overline{C}}$ is a local defining function of $\overline{C}$. 
Then the function 
$\sum_{\ell=1}^a(i^{\ell})^*(\overline{p}^*\Phi')$
can be realized as a pull-back of a strongly psh function $\Phi$ by $p$, where $i$ is a generator of ${\rm Aut}\,(\widetilde{W}/W)$. 
The assertion $(i)$ can be proved by considering this $\Phi$. 
The assertion $(ii)$ followed from the same argument as in the proof of \cite[Theorem 2]{U83} for the functions $\Phi$ and $p^*\Psi$. 
\end{proof}

\subsection{Proof of Theorem \ref{thm:maincor}}

In this subsection, we prove Theorem \ref{thm:maincor}. 

When $N_{C/X}\in\mathcal{E}_1(C)$ holds, it follows from Corollary \ref{cor:main_P1} that 
there exists a neighborhood $V$ of $C$ such that $\mathcal{O}_V(C)$ is flat. 
Thus, by the argument as in \cite[Corollary 3.5]{K1}, we can show that Theorem 
\ref{thm:maincor} $(i)$ holds. 
When $N_{C/X}\not\in\mathcal{P}_0(C)$, we can use \ref{thm:main_2} 
and run the same argument as in the proof of \cite[Theorem 1.1]{K2}, 
which proves Theorem \ref{thm:maincor} $(ii)$. 
\qed

\subsection{Proof of Theorem \ref{thm:9ptbup}}

In this subsection, we prove Theorem \ref{thm:9ptbup}. 
Let $p_1, p_2, \dots, p_9\in\mathbb{P}^2$ be $9$ points different from each other and denote by $\pi\colon X\to\mathbb{P}^2$ the blow-up at $\{p_j\}_{j=1}^9$. 
Take a curve $C_0\subset\mathbb{P}^2$ of degree $3$ which includes $\{p_j\}_{j=1}^9$ as elements. 

\begin{proposition}\label{prop:9ptbup_all}
\ \\
$(i)$ If $K_X^{-1}$ is not nef, the positive part of $K_X^{-1}$ is semi-ample. \\
$(ii)$ If $K_X^{-1}$ is nef, $C_0$ can be taken as a curve with $\{p_j\}_{j=1}^9\cap C_{\rm sing}=\emptyset$ and satisfying one of the following seven conditions: 
\begin{description}
  \setlength{\parskip}{0cm} 
  \setlength{\itemsep}{0cm} 
\item[$(7.10.1)$] $C_0$ is a non-singular elliptic curve. 
\item[$(7.10.2)$] $C_0$ is a rational curve with a node. 
\item[$(7.10.3)$] $C_0$ is a rational curve with a cusp. 
\item[$(7.10.4)$] $C_0$ has only nodes, and it has two irreducible components: $\ell_0$ with degree $1$ and $\ell_1$ with degree $2$. It also hold that $\#(\{p_j\}_{j=1}^9\cap \ell_0)=3$ and that 
$\#(\{p_j\}_{j=1}^9\cap \ell_1)=6$. 
\item[$(7.10.5)$] $C_0$ has two irreducible components: $\ell_0$ with degree $1$ and $\ell_1$ with degree $2$. It also hold that $\#(\ell_0\cap\ell_1)=1$, $\#(\{p_j\}_{j=1}^9\cap \ell_0)=3$, and that 
$\#(\{p_j\}_{j=1}^9\cap \ell_1)=6$. 
\item[$(7.10.6)$] $C_0$ has only nodes, and it has three irreducible components $\ell_0, \ell_1, \ell_2$ with degree $1$. It also holds that $\#(\{p_j\}_{j=1}^9\cap \ell_j)=3$ for each $j=0, 1, 2$. 
\item[$(7.10.7)$] $C_0$ has three irreducible components $\ell_0, \ell_1, \ell_2$ with degree $1$. It also hold that $\#(\ell_0\cap\ell_1\cap\ell_2)=1$ and that $\#(\{p_j\}_{j=1}^9\cap \ell_j)=3$ for each $j=0, 1, 2$. 
\end{description}
\end{proposition}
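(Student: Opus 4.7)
The plan is to start from the Riemann--Roch computation on $X$. Since $K_X^2 = 9 - 9 = 0$, one has $\chi(X, K_X^{-1}) = 1 + \tfrac{1}{2}K_X^{-1}(K_X^{-1} - K_X) = 1 + K_X^{-2} = 1$, while Serre duality gives $h^2(K_X^{-1}) = h^0(2K_X) = 0$ (as $2K_X \cdot H = -6 < 0$). Therefore $h^0(X, K_X^{-1}) \geq 1$, so there is always an effective $C \in |K_X^{-1}|$, and $C_0 := \pi_* C$ is a cubic through $p_1, \ldots, p_9$. The argument then splits according to whether $K_X^{-1}$ is nef.

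For part $(i)$, assume $K_X^{-1}$ is not nef and take its Zariski decomposition $K_X^{-1} = P + N$ with $P$ nef, $N$ effective and nonzero, $P \cdot N_i = 0$ on each irreducible component $N_i$ of $N$, and with the support of $N$ negative-definite. From $0 = (K_X^{-1})^2 = P^2 + N^2$ and $N^2 < 0$ (negative definiteness combined with $N \neq 0$) one obtains $P^2 > 0$, so $P$ is big and nef. Since $X$ is a smooth rational surface (so $h^i(\mathcal{O}_X) = 0$ for $i \geq 1$), an application of the Kawamata--Shokurov base-point-free theorem --- or, equivalently, Zariski's criterion for semi-ampleness after observing that the null locus of $P$ is contained in the negative-definite support of $N$ --- then yields the semi-ampleness of $P$.

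For part $(ii)$, assume $K_X^{-1}$ is nef, pick any effective $C \in |K_X^{-1}|$ and set $C_0 = \pi_* C$. Writing $C = \pi^* C_0 - \sum_j m_j E_j$ with $m_j$ the multiplicity of $C_0$ at $p_j$, comparison with $C = 3H - \sum_j E_j$ forces $m_j = 1$ for every $j$, so each $p_j$ is a smooth point of $C_0$. Next I would rule out non-reduced $C_0$: if $C_0$ had a multiple component $D$, the multiplicity constraint forces no $p_j \in D$, so all nine points concentrate on the residual component; in the representative case $C_0 = 2\ell_0 + \ell_1$ with all $p_j \in \ell_1 \setminus \ell_0$, the strict transform $\widetilde{\ell_1} = H - \sum_j E_j$ gives $K_X^{-1} \cdot \widetilde{\ell_1} = 3 - 9 = -6 < 0$, contradicting nefness, and the remaining non-reduced cases are ruled out analogously. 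Once $C_0$ is reduced, the classical classification of plane cubics produces exactly the seven possibilities listed; for the reducible cases the distribution of the $p_j$ across components is forced by nefness, since for $C_0 = \ell_0 + Q$ with $a$ of the $p_j$ on $\ell_0$ and $b$ on $Q$ (where $a + b = 9$ because no $p_j$ lies on $\ell_0 \cap Q$) the inequalities $K_X^{-1} \cdot \widetilde{\ell_0} = 3 - a \geq 0$ and $K_X^{-1} \cdot \widetilde{Q} = 6 - b \geq 0$ give $(a,b) = (3,6)$, and the analogous computation for three lines yields exactly three $p_j$ on each line.

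The most delicate step is the semi-ampleness claim in $(i)$: although the bigness and nefness of $P$ fall out of the Zariski decomposition, one must work with $\mathbb{Q}$-divisors and invoke the correct base-point-free theorem or verify Zariski's criterion on the null locus; in either route the rationality of $X$ and the structure of the negative part play an essential role. Part $(ii)$, by contrast, is essentially a bookkeeping exercise once one notices that the multiplicity computation forces smoothness of $C_0$ at every $p_j$ and that nefness rigidly constrains the partition of the nine points among the irreducible components of $C_0$.
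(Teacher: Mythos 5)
Your part $(ii)$ argument is essentially what the paper has in mind (the paper only writes ``elemental / case-by-case'' for this step), and it is broadly sound. One small gap: when you write $C=\pi^*C_0-\sum m_jE_j$ and conclude $m_j=1$ by comparing with $3\pi^*H-\sum E_j$, you are tacitly assuming that no exceptional divisor $E_j$ is a component of $C$. If some $E_j$ appeared with multiplicity $c_j\ge 1$, the comparison would only give $m_j=c_j+1\ge 2$, so $p_j$ would be a singular point of $C_0$. One must rule this out using nefness (e.g.\ a node of $C_0$ at $p_j$ forces $K_X^{-1}\cdot\widetilde{C_0}=9-2-8<0$, a contradiction; similar short intersection computations kill the reducible and multiplicity-three sub-cases). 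Once that is noted, the rest of $(ii)$ --- reducedness, the cubic classification, and the nefness constraints $3-a\ge 0$, $6-b\ge 0$ forcing $(a,b)=(3,6)$, etc.\ --- is exactly the intended bookkeeping.

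Part $(i)$ is where the real gap lies. You correctly extract $P^2=-N^2>0$ from $(K_X^{-1})^2=0$ and the negative-definiteness of $\operatorname{Supp}N$, so $P$ is nef and big. But neither of your two proposed routes to semi-ampleness closes the argument. The Kawamata--Shokurov base-point-free theorem requires $aP-K_X$ to be nef and big for some $a>0$; here $aP-K_X=(a+1)P+N$, which has negative intersection with the curves supporting $N$ --- precisely because $-K_X$ fails to be nef --- so the hypothesis is not satisfied for any $a$, and one cannot simply absorb $N$ into a klt boundary for the same reason. The Zariski-criterion route requires identifying the null locus $\{C\colon P\cdot C=0\}$ with $\operatorname{Supp}N$, but there may well be irreducible curves orthogonal to $P$ that are not components of $N$, so that inclusion is not automatic and you do not establish it. The paper instead observes $(K_X^{-1}\cdot P)=-(N^2)>0$ (which on unwinding is the same inequality $P^2>0$ that you derive, since $K_X^{-1}\cdot P=P^2$) and invokes Laface--Testa, Lemma~3.1: on a smooth rational surface, a nef divisor $P$ with $-K_X\cdot P>0$ is semi-ample. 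That lemma is tailored to rational surfaces and is exactly the missing ingredient; citing it (or reproving it) is what your write-up needs to complete $(i)$.
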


\begin{proof}
First we show the assertion $(i)$. 
Let $P$ be the nef part of Zariski decomposition of $K_X^{-1}$ and denote by $N$ the negative part: $K_X^{-1}=P\otimes\mathcal{O}_X(N)$. 
When $K_X^{-1}$ is not nef, then $(K_X^{-1}. P)=-((P+N). N)=-(N^2)>0$ holds. 
Thus we can apply \cite[Lemma 3.1]{LT} and conclude that $P$ is semi-ample. 
$(ii)$ can be shown by elemental arguments can be shown by case-by-case argument. 
\end{proof}

It follows from the above proposition that, 
in order to prove Theorem \ref{thm:9ptbup}, it is sufficient to show one of the assertions $(i), (ii), \dots, (iv)$ holds by assuming each condition $(7.10.1), (7.10.2), \dotsm (7.10.7)$ in Proposition \ref{prop:9ptbup_all} $(ii)$. 
In the following, we denote by $C$ the strict transform of $C_0$. 

\subsubsection{case $(7.10.1)$}

In this case, the pair $(C, X)$ is of infinite (see \cite[Lemma 6.2]{N}). 
Thus we can apply \cite[Theorem 3]{U83} to show that the assertion $(i), (ii)$, or $(iv)$ in Theorem \ref{thm:9ptbup} holds (see \cite{Br}). 

\subsubsection{case $(7.10.2)$}

When $N_{C/X}\not\in\mathcal{P}_0(C)$, we can 
use Theorem \ref{thm:maincor} $(ii)$ 
to show the assertion Theorem \ref{thm:9ptbup} $(iii)$ holds. 
Thus all we have to do is to consider the case when $N_{C/X}\in\mathcal{P}_0(C)$. 
If $N_{C/X}\in\mathcal{E}_0(C)$ holds, it follows from Lemma \ref{lem:torsion_9ptbup} that the assertion Theorem \ref{thm:9ptbup} $(i)$ holds. 
If $N_{C/X}\in\mathcal{E}_1(C)$ holds, we can apply Theorem \ref{thm:maincor} $(i)$
and show that the assertion Theorem \ref{thm:9ptbup} $(ii)$ holds. 
The rest case is Theorem \ref{thm:9ptbup} $(iv)$. 

\subsubsection{case $(7.10.3)$}

Let us denote by $q$ the cuspidal point of $C$. 
Let $\pi_1\colon S_1\to X$ be the blow-up at $q$, 
$\pi_2\colon S_2\to S_1$を${\rm Supp}\,\pi_1^*C$ the blow-up at the singular point, 
and $\pi_3\colon S\to S_2$を${\rm Supp}\,\pi_2^*\pi_1^*C$ be the blow-up at the singular point. 
Denote by $\pi$ the map $\pi_3\circ\pi_2\circ\pi_1\colon S\to S_0$. 
Let us denote by $C_1\subset S$ the strict transform of the exceptional curve of $\pi_3$, 
by $E_1\subset S$ the strict transform of the exceptional curve of $\pi_2$, 
by $E_2$ the strict transform of the exceptional curve of $\pi_1$, 
and by $E_3\subset S$ the strict transform of $C_0$. 
Then the divisor $D:=\pi^*C$ can be decomposed as $D=6C_1+3E_1+2E_2+E_3$. 

Consider $p\colon\widetilde{W}\to S$ and $\widetilde{D}$ as in Lemma \ref{lem:existence_Wtild}. 
By a calculation as in Proposition \ref{prop:hodge}, it follows that $H^1({\rm Supp}\,D, \mathcal{O}_{{\rm Supp}\,D})=0$ and thus $\mathcal{P}({\rm Supp}\,D)=\{\mathcal{O}_{{\rm Supp}\,D}\}$. 
Therefore it turns out that $N_{D/S}=\mathcal{O}_{{\rm Supp}\,D}$. 
When the pair $(\widetilde{D}, \widetilde{W})$ is of infinite type, 
it follows from Proposition \ref{thm:SNCuedath_1} that there exists a neighborhood $W$ of ${\rm Supp}\,D$ such that $\mathcal{O}_W(D)=\mathcal{O}_W$. 
Thus we obtain that ${\rm dim}\,H^0(C, \mathcal{O}_X(C)|_C)=1$. 
Considering the exact sequence $H^0(X, \mathcal{O}_X(C))\to H^0(C, \mathcal{O}_X(C)|_C)\to H^1(X, \mathcal{O}_X)=0$ induced by 
$0\to\mathcal{O}_X\to\mathcal{O}_X(C)\to\mathcal{O}_X(C)\otimes\mathcal{O}_X/\mathcal{O}_X(-C)\to 0$, 
we can conclude that the assertion Theorem \ref{thm:9ptbup} $(i)$ holds in this case. 
When the pair $(\widetilde{D}, \widetilde{W})$ is of finite type, 
we can use Proposition \ref{thm:SNCuedath_2} and run the argument as in the proof of \cite[Theorem 1.1]{K2} to conclude that the assertion Theorem \ref{thm:9ptbup} $(iii)$ holds. 

\subsubsection{case $(7.10.4)$}

In this case, we can show Theorem \ref{thm:9ptbup} by the same argument as case $(7.10.2)$. 

\subsubsection{case $(7.10.5)$}

In this case, we can show Theorem \ref{thm:9ptbup} by the same argument as case $(7.10.3)$. 

\subsubsection{case $(7.10.6)$}

In this case, we can show Theorem \ref{thm:9ptbup} by the same argument as case $(7.10.2)$. 

\subsubsection{case $(7.10.7)$}

In this case, we can show Theorem \ref{thm:9ptbup} by the same argument as case $(7.10.3)$. 
\qed


\end{document}